\documentclass[11pt,a4paper,reqno]{amsart}
\usepackage[T1]{fontenc}
\usepackage{microtype}
\usepackage[margin=28mm,headheight=14pt]{geometry}
\usepackage{mathtools,enumitem}
\usepackage{xcolor}
\usepackage{tikz}
\usetikzlibrary{arrows.meta}
\usepackage[colorlinks=true,linkcolor=blue!60!black,citecolor=blue!60!black,urlcolor=blue!60!black]{hyperref}
\usepackage[nameinlink,noabbrev,capitalize]{cleveref}
\AtBeginDocument{}
\hypersetup{pdftitle={From boundary geometry to counterexamples in Smale's mean value problem},pdfauthor={Tom Adamczewski and Matthew J. Colbrook},pdfsubject={Geometric mechanisms and polynomial transfer}}
\numberwithin{equation}{section}
\newtheorem{theorem}{Theorem}[section]
\newtheorem{proposition}[theorem]{Proposition}
\newtheorem{lemma}[theorem]{Lemma}
\newtheorem{corollary}[theorem]{Corollary}
\theoremstyle{definition}

\theoremstyle{remark}

\newcommand{\C}{\mathbb C}
\newcommand{\D}{\mathbb D}
\newcommand{\T}{\mathbb T}
\newcommand{\N}{\mathbb N}
\newcommand{\Q}{\mathbb Q}
\newcommand{\ol}[1]{\overline{#1}}

\newcommand{\dist}{\operatorname{dist}}

\newcommand{\wind}{\operatorname{wind}}

\newcommand{\Smal}{\mathcal S}
\newcommand{\Boundary}{\mathcal L}
\newcommand{\dd}{\,\mathrm d}
\DeclareMathOperator{\Ree}{Re}
\DeclareMathOperator{\Imm}{Im}
\setlist[enumerate]{label=\textup{(\roman*)},leftmargin=2.2em,itemsep=3pt,topsep=5pt}
\allowdisplaybreaks[2]
\title[Geometry and counterexamples in Smale's mean value problem]{From boundary geometry to\\counterexamples in Smale's mean value problem}
\author{Tom Adamczewski}
\address{Epoch AI}
\email{tom@epoch.ai}
\author{Matthew J. Colbrook}
\address{Department of Applied Mathematics and Theoretical Physics, University of Cambridge, Cambridge, United Kingdom}
\email{m.colbrook@damtp.cam.ac.uk}
\date{\today}
\subjclass[2020]{30C15, 30C20, 30C30, 31A15}
\keywords{Smale's mean value problem, polynomial critical points, Blaschke factors, Faber polynomials, logarithmic capacity}

\begin{document}
\begin{abstract}
This paper provides a human digestion of the recent counterexample to Smale's mean value problem found by GPT-6 Astra. We give a geometric mechanism for constructing counterexamples. An outward normal that re-enters an analytic Jordan domain, subject to a curvature condition, yields a holomorphic model whose normalized divided differences exceed one in modulus on the boundary. Polynomial perturbations then concentrate every critical point near the boundary whilst preserving the strict inequality. This turns boundary geometry into counterexamples in every sufficiently large degree. We also locate these critical points to first order and show that their limiting distribution is the logarithmic equilibrium measure on the boundary. We end with an explicit example of degree $29\,984$, verified by interval arithmetic.\vspace{-5mm}
\end{abstract}
\maketitle

\section{Introduction}

Smale's mean value problem asks for a complex analog of the mean value theorem. For a polynomial $p$, fix $a\in\mathbb{C}$ with $p'(a)\ne 0$. For a critical point $c$, the divided difference
\[
\frac{p(c)-p(a)}{c-a}
\]
is a secant slope. The proposed bound is that at least one critical point $c$ satisfies
\[
\left|\frac{p(c)-p(a)}{c-a}\right|\le |p'(a)|.
\]
To state the problem quantitatively, for $\deg p\ge 2$ define
\begin{equation}\label{eq:smale-functional}
 \Smal(p,a)=\min_{p'(c)=0}
 \left|\frac{p(c)-p(a)}{(c-a)p'(a)}\right|.
\end{equation}
Smale proved that $\Smal(p,a)\le 4$ and conjectured the sharper bound $\Smal(p,a)\le(\deg p-1)/\deg p$ \cite{Smale1981}. We review previous work in \cref{sec:previous}.

This paper provides a human discussion and digestion of a counterexample to the constant-one conjecture ($\Smal(p,a)\le 1$) found by GPT-6 Astra. We will describe the idea behind the counterexample and streamline the process.

\subsection{The main result}

To make the minimum in \labelcref{eq:smale-functional} exceed one, we must beat the proposed bound at every critical point. As an example, consider the normalized polynomials
\[
 p_n(z)=\left(1+\frac{z}{n}\right)^n-1,\qquad  n\ge 2.
\]
These polynomials converge locally uniformly to $e^z-1$, whose derivative never vanishes. Nevertheless, $p_n'$ has a zero at $-n$ of multiplicity $n-1$, and
\[
 \left|\frac{p_n(-n)}{-n}\right|=\frac{1}{n}.
\]
Though the critical point escapes any compact set as $n\rightarrow\infty$, it still determines the minimum in \labelcref{eq:smale-functional}. Hence, it is not enough to approximate a function on a fixed compact. We also need control of the critical points far away.

To obtain this global control, we use the boundary of a domain. We first
construct a holomorphic function whose divided-difference modulus exceeds
$1$ there. Then we approximate the function by polynomials whose critical points
concentrate on that boundary. The following theorem gives a sufficient geometric condition for counterexamples to Smale's mean value conjecture.\footnote{A \emph{Jordan domain} is the interior of a simple closed curve. A boundary is \emph{analytic} if it has a real-analytic parametrization whose derivative does not vanish. We orient $\partial\Omega$ counterclockwise, and write $\nu$ for the outward unit normal and $\kappa$ for the signed curvature.}

\begin{theorem}[Normal-return criterion]\label{thm:normal-main}
Let $\Omega$ be an analytic Jordan domain. Suppose that there exist $\zeta\in\partial\Omega$ and $d>0$ such that
\begin{equation}\label{eq:normal-return}
 a=\zeta+d\cdot\nu(\zeta)\in\Omega,
 \qquad 1+d\cdot\kappa(\zeta)>0.
\end{equation}
Then there exist $\delta>0$ and $n_0\in\N$ such that, for every $n\ge n_0$, there is a degree-$n$ polynomial $p_n$ satisfying
\[
 p_n(0)=0,\qquad p_n'(0)=1,\qquad
 \Smal(p_n,0)\ge 1+\delta.
\]
\end{theorem}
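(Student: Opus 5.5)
The plan has two stages. First I build a holomorphic model $f$ near $\overline\Omega$ that already beats the bound on $\partial\Omega$. Then I approximate $f$ by polynomials whose critical points are forced onto $\partial\Omega$.

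\textbf{Normalization and target model.} After a translation I may take $a=0$. The aim is a function $f$, holomorphic on a neighbourhood $U$ of $\overline\Omega$, with the following properties:
\[
 f(0)=0,\qquad f'(0)=1,\qquad f'\neq 0 \text{ on } U,\qquad \Bigl|\frac{f(z)}{z}\Bigr|\ge 1+3\delta \ \text{ for } z\in\partial\Omega .
\]
By continuity the last inequality then persists on a thin closed collar $A$ of $\partial\Omega$.

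By the maximum principle, $g=f/z$ must vanish somewhere in $\Omega$. If it did not, $1/g$ would be holomorphic with $|1/g|<1$ on $\partial\Omega$ and $1/g(0)=1$, which is impossible. So $f$ must have extra zeros in $\Omega$, while $f'$ has none. I would look for $f$ in the form
\[
 f(z)=\int_0^z e^{h(w)}\dd w,\qquad h(0)=0.
\]
The hypothesis \eqref{eq:normal-return} is used as follows. The outward normal ray from $\zeta$ exits $\Omega$ and re-enters at $0$. Hence the boundary point $\zeta$ can be joined to $0$ by a path lying outside $\Omega$, namely the normal segment, as well as by paths inside $\Omega$.

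I choose $h$ (via Runge approximation on $\overline\Omega$ together with the normal segment, which is possible since the complement of this compact set is connected) so that $e^{h}$ is huge along the normal segment. Integrating along that segment makes $|f|$ large near $\zeta$. The curvature condition $1+d\kappa(\zeta)>0$ is what makes the normal segment a nondegenerate, transversal exit; it lets this Laplace-type concentration spread from $\zeta$ to a controlled region. The remaining parts of $\partial\Omega$ are handled by combining finitely many such bumps, or by a Blaschke-factor correction. This step is the main obstacle, and it is the only place where \eqref{eq:normal-return} is used. What I would try first is to run a saddle-point computation at $\zeta$, where the curvature inequality is exactly the sign condition for the relevant second derivative.

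\textbf{Polynomial transfer.} Let $\Phi$ be the exterior conformal map of $\Omega$ and $F_m$ the Faber polynomials. Since $f'$ is holomorphic beyond the level curve $\{|\Phi|=\rho\}$ for some $\rho>1$, its Faber truncations $q_m$ satisfy
\[
 q_m\to f' \quad\text{geometrically on } \{|\Phi|\le\rho\}.
\]
I set
\[
 p_n'(z)=q_{n-2}(z)+\eta_n F_{n-1}(z),
\]
with $\eta_n\,\mathrm{cap}(\partial\Omega)^{n-1}\asymp 1$, and then adjust by a tiny multiple to restore $p_n'(0)=1$. I define $p_n(z)=\int_0^z p_n'$.

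The zeros are located by Rouché's theorem. On $\{|\Phi|\le 1-\epsilon\}$ the Faber term is exponentially small compared to $|f'|$, which is bounded below there. Outside $\{|\Phi|\ge 1+\epsilon\}$ the term $\eta_nF_{n-1}\approx\Phi^{n-1}$ dominates. Hence all $n-1$ critical points lie in the collar $A$ for large $n$.

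On $A$ the integrated perturbation is $O(1/n)$, because $\int F_{n-1}$ is of order $\Phi^n/n$. Therefore $p_n=f+o(1)$ uniformly on $A$. Since $|z|$ is bounded below on $A$, this gives
\[
 |p_n(c)/c|\ge 1+\delta
\]
at every critical point $c$. Taking $n_0$ large enough that all error terms are below $\delta$ yields the polynomials $p_n$ of \cref{thm:normal-main} for every $n\ge n_0$.
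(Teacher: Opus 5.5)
\textbf{Gap in your first stage.} The holomorphic model is where all the content of the theorem lies. By \cref{prop:boundary-universal}, producing $F$ with $F(a)=0$, $F'(a)=1$, $F'\ne0$ on $\ol\Omega$ and $|F/(z-a)|>1$ on $\partial\Omega$ is equivalent to $M>1$, so it cannot be deferred as ``the main obstacle''. The mechanism you sketch does not produce such a model, for three reasons.
\begin{enumerate}
\item $\ol\Omega$ together with the normal segment does not have connected complement. The piece of the segment between $\zeta$ and its first return to $\partial\Omega$ is a crosscut of the exterior of $\Omega$ and cuts off a bounded pocket. So polynomial Runge approximation on that set is unavailable.
\item For $f=\int_a^z e^h$ to be single-valued and holomorphic near $\ol\Omega$, the function $h$ only matters on a simply connected neighbourhood of $\ol\Omega$. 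Cauchy's theorem then makes $f|_{\ol\Omega}$ depend only on $e^h|_{\ol\Omega}$. Large values of $e^h$ on the exterior part of the segment either cancel out, or give a different branch of $f$ from the one defined on $\ol\Omega$.
\item The inequality is needed on all of $\partial\Omega$, but the hypothesis supplies only one normal-return point. Nothing supports ``finitely many bumps''. A Blaschke-factor correction would destroy $f'=e^h\ne0$, which is the one property your ansatz was built to guarantee.
\end{enumerate}
The curvature condition is also not about transversality of the exit, which is automatic for a normal ray. For an arclength parametrisation $\gamma$ of $\partial\Omega$ with $\gamma(0)=\zeta$, one has $\frac{\mathrm d^2}{\mathrm ds^2}|\gamma(s)-a|^2=2(1+d\kappa(\zeta))$ at $s=0$.

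\textbf{The missing idea.} Reverse your priorities: make the boundary inequality automatic and spend the hypothesis on $F'\ne0$. Take $\Phi:\D\to\Omega$ conformal with $\Phi(1)=\zeta$, $W=\Phi^{-1}$ and $B_r(w)=(w-r)/(1-rw)$. The model $F_r(z)=(z-a)B_r(W(z))/B_r(W(a))$ satisfies $|F_r(z)/(z-a)|=|B_r(W(a))|^{-1}>1$ on all of $\partial\Omega$ at once. Its extra zero at $\Phi(r)$ is exactly the interior zero of $f/(z-a)$ that you correctly showed must exist.

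The critical points of $F_r$ in $\ol\Omega$ correspond to zeros in $\ol\D$ of $N_r(w)=(w-r)(1-rw)+(1-r^2)J(w)$, where $J=(\Phi-a)/\Phi'$. As $r=1-\varepsilon\uparrow1$, only the two zeros near $w=1$ need attention, and both hypotheses act on them:
\begin{enumerate}
\item The normal return is exactly $J(1)=-\tau<0$. It makes the zeros split tangentially, $w_\pm=1\pm i\sqrt{2\tau\varepsilon}+J'(1)\varepsilon+O(\varepsilon^{3/2})$. Without it, one zero enters $\D$.
\item The curvature condition is exactly $\tau+\Ree J'(1)=1+d\kappa(\zeta)>0$, which puts both zeros outside $\ol\D$; see \cref{lem:splitting,prop:curvature}.
\end{enumerate}

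\textbf{Your second stage.} Using primitives of Faber polynomials is workable, but one claim is false as written. The integrated perturbation is not $O(1/n)$ on a collar of fixed width: on its outer part, $|\Psi|^n/n$ is exponentially large, where $\Psi$ is the exterior map (your $\Phi$). Fix this by first using the critical equation to get $|\Psi(c)|^{n-1}\asymp1$, hence $|\Psi(c)|=1+O(1/n)$. Then estimate the primitive only at such points, for example along a path that is radial in the $\Psi$-coordinate. The paper's \labelcref{eq:faber-critical-value} does this algebraically.
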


\begin{proof}
{\color{black}\hypersetup{linkcolor=blue!60!black}See \Cref{subsec:normal-proof}.}
\end{proof}

The outward normal ray from $\zeta$ leaves $\Omega$ and then returns to it; see \cref{fig:cubic}. This is impossible for a convex domain. The roles of the curvature at $\zeta$ and the distance $d$ will become clear in \cref{sec:geometry}. The sizes of the degree $n_0$ and margin $\delta$ depend on $\Omega$ and are discussed throughout the proof. \Cref{prop:boundary-universal} shows that $\sup_{n\ge2}\sup_{\deg p=n}\Smal(p,0)$ can be written in terms of a supremum over such domains.

\begin{figure}[htbp]
\centering
\begingroup
\def\CubicBoundaryCoordinates{(1.0000000,0.0000000) (0.8843815,0.0030871) (0.7928512,0.0055352) (0.7234761,0.0075765) (0.6704940,0.0093625) (0.6289684,0.0109787)
(0.5955012,0.0124740) (0.5678510,0.0138780) (0.5445238,0.0152099) (0.5244982,0.0164830) (0.5070557,0.0177068) (0.4916769,0.0188883)
(0.4779768,0.0200332) (0.4656638,0.0211457) (0.4545126,0.0222294) (0.4443463,0.0232872) (0.4350236,0.0243216) (0.4264302,0.0253346)
(0.4184726,0.0263281) (0.4110735,0.0273035) (0.4041680,0.0282622) (0.3977017,0.0292054) (0.3916281,0.0301341) (0.3859075,0.0310494)
(0.3805057,0.0319520) (0.3753931,0.0328426) (0.3705435,0.0337221) (0.3659344,0.0345910) (0.3615457,0.0354499) (0.3573596,0.0362993)
(0.3533604,0.0371397) (0.3495340,0.0379715) (0.3458677,0.0387953) (0.3423503,0.0396113) (0.3389715,0.0404199) (0.3357221,0.0412215)
(0.3325936,0.0420163) (0.3295784,0.0428048) (0.3266695,0.0435870) (0.3238606,0.0443634) (0.3211459,0.0451341) (0.3185199,0.0458994)
(0.3159777,0.0466595) (0.3135147,0.0474146) (0.3111269,0.0481650) (0.3088101,0.0489107) (0.3065609,0.0496520) (0.3043759,0.0503891)
(0.3022520,0.0511221) (0.3001862,0.0518512) (0.2981758,0.0525764) (0.2962183,0.0532981) (0.2943113,0.0540162) (0.2924527,0.0547310)
(0.2906402,0.0554426) (0.2888720,0.0561510) (0.2871462,0.0568565) (0.2854610,0.0575590) (0.2838149,0.0582588) (0.2822063,0.0589560)
(0.2806338,0.0596505) (0.2790958,0.0603427) (0.2775913,0.0610324) (0.2761189,0.0617199) (0.2746774,0.0624052) (0.2732659,0.0630884)
(0.2718831,0.0637696) (0.2705281,0.0644489) (0.2692000,0.0651263) (0.2678978,0.0658020) (0.2666208,0.0664760) (0.2653680,0.0671484)
(0.2641387,0.0678193) (0.2629322,0.0684887) (0.2617477,0.0691567) (0.2605846,0.0698234) (0.2594422,0.0704889) (0.2583200,0.0711532)
(0.2572172,0.0718163) (0.2561333,0.0724784) (0.2550679,0.0731395) (0.2540203,0.0737997) (0.2529901,0.0744590) (0.2519768,0.0751176)
(0.2509799,0.0757753) (0.2499991,0.0764324) (0.2490338,0.0770888) (0.2480836,0.0777447) (0.2471483,0.0784001) (0.2462273,0.0790549)
(0.2453204,0.0797094) (0.2444271,0.0803636) (0.2435472,0.0810174) (0.2426804,0.0816710) (0.2418262,0.0823244) (0.2409845,0.0829776)
(0.2401549,0.0836308) (0.2393371,0.0842839) (0.2385309,0.0849370) (0.2377361,0.0855903) (0.2369523,0.0862436) (0.2361793,0.0868971)
(0.2354169,0.0875508) (0.2346649,0.0882048) (0.2339231,0.0888590) (0.2331912,0.0895137) (0.2324690,0.0901688) (0.2317564,0.0908243)
(0.2310531,0.0914804) (0.2303590,0.0921370) (0.2296738,0.0927943) (0.2289975,0.0934522) (0.2283299,0.0941108) (0.2276707,0.0947702)
(0.2270198,0.0954304) (0.2263772,0.0960914) (0.2257426,0.0967534) (0.2251158,0.0974163) (0.2244968,0.0980803) (0.2238855,0.0987453)
(0.2232816,0.0994114) (0.2226852,0.1000787) (0.2220959,0.1007472) (0.2215138,0.1014170) (0.2209388,0.1020880) (0.2203706,0.1027605)
(0.2198093,0.1034344) (0.2192546,0.1041098) (0.2187066,0.1047866) (0.2181651,0.1054651) (0.2176300,0.1061452) (0.2171012,0.1068270)
(0.2165787,0.1075106) (0.2160623,0.1081959) (0.2155520,0.1088831) (0.2150477,0.1095723) (0.2145493,0.1102634) (0.2140568,0.1109565)
(0.2135700,0.1116517) (0.2130890,0.1123491) (0.2126136,0.1130487) (0.2121438,0.1137505) (0.2116795,0.1144547) (0.2112207,0.1151613)
(0.2107673,0.1158703) (0.2103192,0.1165818) (0.2098765,0.1172960) (0.2094389,0.1180128) (0.2090066,0.1187323) (0.2085795,0.1194546)
(0.2081574,0.1201797) (0.2077404,0.1209078) (0.2073285,0.1216389) (0.2069215,0.1223731) (0.2065195,0.1231103) (0.2061225,0.1238509)
(0.2057303,0.1245947) (0.2053429,0.1253419) (0.2049604,0.1260925) (0.2045827,0.1268467) (0.2042098,0.1276045) (0.2038417,0.1283660)
(0.2034783,0.1291312) (0.2031195,0.1299004) (0.2027655,0.1306735) (0.2024162,0.1314506) (0.2020716,0.1322319) (0.2017316,0.1330175)
(0.2013962,0.1338073) (0.2010655,0.1346016) (0.2007395,0.1354005) (0.2004180,0.1362039) (0.2001012,0.1370121) (0.1997890,0.1378252)
(0.1994815,0.1386432) (0.1991785,0.1394663) (0.1988802,0.1402946) (0.1985866,0.1411281) (0.1982976,0.1419671) (0.1980133,0.1428117)
(0.1977336,0.1436619) (0.1974587,0.1445179) (0.1971884,0.1453798) (0.1969229,0.1462479) (0.1966622,0.1471221) (0.1964062,0.1480027)
(0.1961550,0.1488898) (0.1959087,0.1497835) (0.1956673,0.1506841) (0.1954307,0.1515916) (0.1951991,0.1525063) (0.1949725,0.1534282)
(0.1947509,0.1543576) (0.1945344,0.1552947) (0.1943230,0.1562397) (0.1941167,0.1571926) (0.1939157,0.1581538) (0.1937200,0.1591234)
(0.1935297,0.1601017) (0.1933447,0.1610888) (0.1931653,0.1620849) (0.1929914,0.1630904) (0.1928231,0.1641053) (0.1926606,0.1651301)
(0.1925038,0.1661648) (0.1923530,0.1672099) (0.1922081,0.1682655) (0.1920693,0.1693319) (0.1919367,0.1704094) (0.1918104,0.1714983)
(0.1916906,0.1725990) (0.1915772,0.1737116) (0.1914706,0.1748367) (0.1913707,0.1759744) (0.1912778,0.1771252) (0.1911919,0.1782894)
(0.1911133,0.1794674) (0.1910421,0.1806595) (0.1909785,0.1818663) (0.1909226,0.1830882) (0.1908746,0.1843255) (0.1908348,0.1855788)
(0.1908033,0.1868484) (0.1907804,0.1881351) (0.1907663,0.1894391) (0.1907612,0.1907612) (0.1907654,0.1921018) (0.1907791,0.1934616)
(0.1908026,0.1948412) (0.1908362,0.1962412) (0.1908803,0.1976624) (0.1909351,0.1991053) (0.1910011,0.2005709) (0.1910784,0.2020598)
(0.1911676,0.2035729) (0.1912691,0.2051110) (0.1913832,0.2066751) (0.1915104,0.2082660) (0.1916513,0.2098849) (0.1918062,0.2115328)
(0.1919758,0.2132107) (0.1921605,0.2149199) (0.1923611,0.2166615) (0.1925781,0.2184370) (0.1928122,0.2202477) (0.1930642,0.2220950)
(0.1933348,0.2239805) (0.1936248,0.2259059) (0.1939351,0.2278730) (0.1942667,0.2298835) (0.1946204,0.2319396) (0.1949975,0.2340433)
(0.1953990,0.2361968) (0.1958262,0.2384027) (0.1962804,0.2406635) (0.1967630,0.2429821) (0.1972755,0.2453613) (0.1978197,0.2478044)
(0.1983972,0.2503149) (0.1990100,0.2528966) (0.1996602,0.2555534) (0.2003501,0.2582898) (0.2010822,0.2611106) (0.2018592,0.2640209)
(0.2026840,0.2670263) (0.2035599,0.2701331) (0.2044904,0.2733480) (0.2054795,0.2766784) (0.2065316,0.2801326) (0.2076513,0.2837195)
(0.2088441,0.2874492) (0.2101159,0.2913328) (0.2114733,0.2953828) (0.2129239,0.2996130) (0.2144760,0.3040390) (0.2161391,0.3086786)
(0.2179241,0.3135517) (0.2198433,0.3186810) (0.2219110,0.3240928) (0.2241434,0.3298171) (0.2265598,0.3358887) (0.2291822,0.3423483)
(0.2320371,0.3492439) (0.2351555,0.3566321) (0.2385749,0.3645807) (0.2423405,0.3731716) (0.2465075,0.3825048) (0.2511446,0.3927038)
(0.2563376,0.4039234) (0.2621962,0.4163605) (0.2688624,0.4302697) (0.2765238,0.4459872) (0.2854341,0.4639665) (0.2959442,0.4848344)
(0.3085534,0.5094822) (0.3239920,0.5392132) (0.3433551,0.5759836) (0.3682995,0.6227604) (0.4012193,0.6838617) (0.4448643,0.7643533)
(0.5000000,0.8660254) (0.5619418,0.9812057) (0.6229792,1.0966417) (0.6784977,1.2041394) (0.7275414,1.3017820) (0.7706839,1.3903505)
(0.8088111,1.4712219) (0.8427369,1.5456887) (0.8731277,1.6148158) (0.9005133,1.6794502) (0.9253137,1.7402619) (0.9478641,1.7977843)
(0.9684355,1.8524462) (0.9872489,1.9045962) (1.0044875,1.9545212) (1.0203040,2.0024594) (1.0348279,2.0486110) (1.0481691,2.0931453)
(1.0604225,2.1362073) (1.0716702,2.1779214) (1.0819840,2.2183959) (1.0914269,2.2577249) (1.1000546,2.2959912) (1.1079169,2.3332677)
(1.1150581,2.3696191) (1.1215180,2.4051031) (1.1273326,2.4397713) (1.1325345,2.4736699) (1.1371533,2.5068405) (1.1412161,2.5393211)
(1.1447478,2.5711456) (1.1477710,2.6023453) (1.1503069,2.6329485) (1.1523749,2.6629813) (1.1539930,2.6924673) (1.1551778,2.7214285)
(1.1559452,2.7498849) (1.1563095,2.7778551) (1.1562844,2.8053563) (1.1558828,2.8324044) (1.1551167,2.8590141) (1.1539973,2.8851991)
(1.1525353,2.9109720) (1.1507409,2.9363448) (1.1486236,2.9613285) (1.1461925,2.9859335) (1.1434560,3.0101693) (1.1404225,3.0340450)
(1.1370998,3.0575692) (1.1334952,3.0807496) (1.1296158,3.1035939) (1.1254685,3.1261090) (1.1210597,3.1483014) (1.1163957,3.1701775)
(1.1114825,3.1917430) (1.1063258,3.2130035) (1.1009312,3.2339642) (1.0953040,3.2546299) (1.0894493,3.2750054) (1.0833721,3.2950950)
(1.0770772,3.3149029) (1.0705693,3.3344330) (1.0638528,3.3536890) (1.0569321,3.3726744) (1.0498114,3.3913927) (1.0424948,3.4098469)
(1.0349863,3.4280400) (1.0272898,3.4459750) (1.0194090,3.4636546) (1.0113476,3.4810812) (1.0031092,3.4982574) (0.9946972,3.5151855)
(0.9861152,3.5318677) (0.9773663,3.5483061) (0.9684540,3.5645027) (0.9593812,3.5804595) (0.9501512,3.5961781) (0.9407671,3.6116605)
(0.9312317,3.6269081) (0.9215481,3.6419226) (0.9117191,3.6567054) (0.9017475,3.6712581) (0.8916361,3.6855818) (0.8813876,3.6996781)
(0.8710048,3.7135480) (0.8604902,3.7271927) (0.8498465,3.7406134) (0.8390762,3.7538111) (0.8281818,3.7667869) (0.8171658,3.7795417)
(0.8060307,3.7920765) (0.7947789,3.8043921) (0.7834128,3.8164894) (0.7719346,3.8283692) (0.7603469,3.8400322) (0.7486517,3.8514792)
(0.7368515,3.8627110) (0.7249485,3.8737281) (0.7129449,3.8845311) (0.7008429,3.8951208) (0.6886446,3.9054977) (0.6763523,3.9156623)
(0.6639681,3.9256152) (0.6514941,3.9353569) (0.6389324,3.9448878) (0.6262851,3.9542084) (0.6135542,3.9633191) (0.6007419,3.9722204)
(0.5878500,3.9809127) (0.5748808,3.9893963) (0.5618361,3.9976716) (0.5487180,4.0057389) (0.5355284,4.0135986) (0.5222693,4.0212510)
(0.5089427,4.0286963) (0.4955505,4.0359349) (0.4820946,4.0429670) (0.4685770,4.0497929) (0.4549996,4.0564128) (0.4413642,4.0628269)
(0.4276729,4.0690356) (0.4139274,4.0750389) (0.4001296,4.0808370) (0.3862814,4.0864303) (0.3723847,4.0918187) (0.3584413,4.0970026)
(0.3444531,4.1019820) (0.3304218,4.1067571) (0.3163495,4.1113281) (0.3022378,4.1156951) (0.2880885,4.1198582) (0.2739037,4.1238175)
(0.2596849,4.1275731) (0.2454341,4.1311252) (0.2311530,4.1344738) (0.2168434,4.1376190) (0.2025072,4.1405609) (0.1881461,4.1432997)
(0.1737620,4.1458352) (0.1593565,4.1481677) (0.1449316,4.1502972) (0.1304889,4.1522237) (0.1160303,4.1539473) (0.1015575,4.1554680)
(0.0870723,4.1567859) (0.0725764,4.1579010) (0.0580718,4.1588133) (0.0435600,4.1595228) (0.0290430,4.1600296) (0.0145224,4.1603337)
(0.0000000,4.1604351) (-0.0145224,4.1603337) (-0.0290430,4.1600296) (-0.0435600,4.1595228) (-0.0580718,4.1588133) (-0.0725764,4.1579010)
(-0.0870723,4.1567859) (-0.1015575,4.1554680) (-0.1160303,4.1539473) (-0.1304889,4.1522237) (-0.1449316,4.1502972) (-0.1593565,4.1481677)
(-0.1737620,4.1458352) (-0.1881461,4.1432997) (-0.2025072,4.1405609) (-0.2168434,4.1376190) (-0.2311530,4.1344738) (-0.2454341,4.1311252)
(-0.2596849,4.1275731) (-0.2739037,4.1238175) (-0.2880885,4.1198582) (-0.3022378,4.1156951) (-0.3163495,4.1113281) (-0.3304218,4.1067571)
(-0.3444531,4.1019820) (-0.3584413,4.0970026) (-0.3723847,4.0918187) (-0.3862814,4.0864303) (-0.4001296,4.0808370) (-0.4139274,4.0750389)
(-0.4276729,4.0690356) (-0.4413642,4.0628269) (-0.4549996,4.0564128) (-0.4685770,4.0497929) (-0.4820946,4.0429670) (-0.4955505,4.0359349)
(-0.5089427,4.0286963) (-0.5222693,4.0212510) (-0.5355284,4.0135986) (-0.5487180,4.0057389) (-0.5618361,3.9976716) (-0.5748808,3.9893963)
(-0.5878500,3.9809127) (-0.6007419,3.9722204) (-0.6135542,3.9633191) (-0.6262851,3.9542084) (-0.6389324,3.9448878) (-0.6514941,3.9353569)
(-0.6639681,3.9256152) (-0.6763523,3.9156623) (-0.6886446,3.9054977) (-0.7008429,3.8951208) (-0.7129449,3.8845311) (-0.7249485,3.8737281)
(-0.7368515,3.8627110) (-0.7486517,3.8514792) (-0.7603469,3.8400322) (-0.7719346,3.8283692) (-0.7834128,3.8164894) (-0.7947789,3.8043921)
(-0.8060307,3.7920765) (-0.8171658,3.7795417) (-0.8281818,3.7667869) (-0.8390762,3.7538111) (-0.8498465,3.7406134) (-0.8604902,3.7271927)
(-0.8710048,3.7135480) (-0.8813876,3.6996781) (-0.8916361,3.6855818) (-0.9017475,3.6712581) (-0.9117191,3.6567054) (-0.9215481,3.6419226)
(-0.9312317,3.6269081) (-0.9407671,3.6116605) (-0.9501512,3.5961781) (-0.9593812,3.5804595) (-0.9684540,3.5645027) (-0.9773663,3.5483061)
(-0.9861152,3.5318677) (-0.9946972,3.5151855) (-1.0031092,3.4982574) (-1.0113476,3.4810812) (-1.0194090,3.4636546) (-1.0272898,3.4459750)
(-1.0349863,3.4280400) (-1.0424948,3.4098469) (-1.0498114,3.3913927) (-1.0569321,3.3726744) (-1.0638528,3.3536890) (-1.0705693,3.3344330)
(-1.0770772,3.3149029) (-1.0833721,3.2950950) (-1.0894493,3.2750054) (-1.0953040,3.2546299) (-1.1009312,3.2339642) (-1.1063258,3.2130035)
(-1.1114825,3.1917430) (-1.1163957,3.1701775) (-1.1210597,3.1483014) (-1.1254685,3.1261090) (-1.1296158,3.1035939) (-1.1334952,3.0807496)
(-1.1370998,3.0575692) (-1.1404225,3.0340450) (-1.1434560,3.0101693) (-1.1461925,2.9859335) (-1.1486236,2.9613285) (-1.1507409,2.9363448)
(-1.1525353,2.9109720) (-1.1539973,2.8851991) (-1.1551167,2.8590141) (-1.1558828,2.8324044) (-1.1562844,2.8053563) (-1.1563095,2.7778551)
(-1.1559452,2.7498849) (-1.1551778,2.7214285) (-1.1539930,2.6924673) (-1.1523749,2.6629813) (-1.1503069,2.6329485) (-1.1477710,2.6023453)
(-1.1447478,2.5711456) (-1.1412161,2.5393211) (-1.1371533,2.5068405) (-1.1325345,2.4736699) (-1.1273326,2.4397713) (-1.1215180,2.4051031)
(-1.1150581,2.3696191) (-1.1079169,2.3332677) (-1.1000546,2.2959912) (-1.0914269,2.2577249) (-1.0819840,2.2183959) (-1.0716702,2.1779214)
(-1.0604225,2.1362073) (-1.0481691,2.0931453) (-1.0348279,2.0486110) (-1.0203040,2.0024594) (-1.0044875,1.9545212) (-0.9872489,1.9045962)
(-0.9684355,1.8524462) (-0.9478641,1.7977843) (-0.9253137,1.7402619) (-0.9005133,1.6794502) (-0.8731277,1.6148158) (-0.8427369,1.5456887)
(-0.8088111,1.4712219) (-0.7706839,1.3903505) (-0.7275414,1.3017820) (-0.6784977,1.2041394) (-0.6229792,1.0966417) (-0.5619418,0.9812057)
(-0.5000000,0.8660254) (-0.4448643,0.7643533) (-0.4012193,0.6838617) (-0.3682995,0.6227604) (-0.3433551,0.5759836) (-0.3239920,0.5392132)
(-0.3085534,0.5094822) (-0.2959442,0.4848344) (-0.2854341,0.4639665) (-0.2765238,0.4459872) (-0.2688624,0.4302697) (-0.2621962,0.4163605)
(-0.2563376,0.4039234) (-0.2511446,0.3927038) (-0.2465075,0.3825048) (-0.2423405,0.3731716) (-0.2385749,0.3645807) (-0.2351555,0.3566321)
(-0.2320371,0.3492439) (-0.2291822,0.3423483) (-0.2265598,0.3358887) (-0.2241434,0.3298171) (-0.2219110,0.3240928) (-0.2198433,0.3186810)
(-0.2179241,0.3135517) (-0.2161391,0.3086786) (-0.2144760,0.3040390) (-0.2129239,0.2996130) (-0.2114733,0.2953828) (-0.2101159,0.2913328)
(-0.2088441,0.2874492) (-0.2076513,0.2837195) (-0.2065316,0.2801326) (-0.2054795,0.2766784) (-0.2044904,0.2733480) (-0.2035599,0.2701331)
(-0.2026840,0.2670263) (-0.2018592,0.2640209) (-0.2010822,0.2611106) (-0.2003501,0.2582898) (-0.1996602,0.2555534) (-0.1990100,0.2528966)
(-0.1983972,0.2503149) (-0.1978197,0.2478044) (-0.1972755,0.2453613) (-0.1967630,0.2429821) (-0.1962804,0.2406635) (-0.1958262,0.2384027)
(-0.1953990,0.2361968) (-0.1949975,0.2340433) (-0.1946204,0.2319396) (-0.1942667,0.2298835) (-0.1939351,0.2278730) (-0.1936248,0.2259059)
(-0.1933348,0.2239805) (-0.1930642,0.2220950) (-0.1928122,0.2202477) (-0.1925781,0.2184370) (-0.1923611,0.2166615) (-0.1921605,0.2149199)
(-0.1919758,0.2132107) (-0.1918062,0.2115328) (-0.1916513,0.2098849) (-0.1915104,0.2082660) (-0.1913832,0.2066751) (-0.1912691,0.2051110)
(-0.1911676,0.2035729) (-0.1910784,0.2020598) (-0.1910011,0.2005709) (-0.1909351,0.1991053) (-0.1908803,0.1976624) (-0.1908362,0.1962412)
(-0.1908026,0.1948412) (-0.1907791,0.1934616) (-0.1907654,0.1921018) (-0.1907612,0.1907612) (-0.1907663,0.1894391) (-0.1907804,0.1881351)
(-0.1908033,0.1868484) (-0.1908348,0.1855788) (-0.1908746,0.1843255) (-0.1909226,0.1830882) (-0.1909785,0.1818663) (-0.1910421,0.1806595)
(-0.1911133,0.1794674) (-0.1911919,0.1782894) (-0.1912778,0.1771252) (-0.1913707,0.1759744) (-0.1914706,0.1748367) (-0.1915772,0.1737116)
(-0.1916906,0.1725990) (-0.1918104,0.1714983) (-0.1919367,0.1704094) (-0.1920693,0.1693319) (-0.1922081,0.1682655) (-0.1923530,0.1672099)
(-0.1925038,0.1661648) (-0.1926606,0.1651301) (-0.1928231,0.1641053) (-0.1929914,0.1630904) (-0.1931653,0.1620849) (-0.1933447,0.1610888)
(-0.1935297,0.1601017) (-0.1937200,0.1591234) (-0.1939157,0.1581538) (-0.1941167,0.1571926) (-0.1943230,0.1562397) (-0.1945344,0.1552947)
(-0.1947509,0.1543576) (-0.1949725,0.1534282) (-0.1951991,0.1525063) (-0.1954307,0.1515916) (-0.1956673,0.1506841) (-0.1959087,0.1497835)
(-0.1961550,0.1488898) (-0.1964062,0.1480027) (-0.1966622,0.1471221) (-0.1969229,0.1462479) (-0.1971884,0.1453798) (-0.1974587,0.1445179)
(-0.1977336,0.1436619) (-0.1980133,0.1428117) (-0.1982976,0.1419671) (-0.1985866,0.1411281) (-0.1988802,0.1402946) (-0.1991785,0.1394663)
(-0.1994815,0.1386432) (-0.1997890,0.1378252) (-0.2001012,0.1370121) (-0.2004180,0.1362039) (-0.2007395,0.1354005) (-0.2010655,0.1346016)
(-0.2013962,0.1338073) (-0.2017316,0.1330175) (-0.2020716,0.1322319) (-0.2024162,0.1314506) (-0.2027655,0.1306735) (-0.2031195,0.1299004)
(-0.2034783,0.1291312) (-0.2038417,0.1283660) (-0.2042098,0.1276045) (-0.2045827,0.1268467) (-0.2049604,0.1260925) (-0.2053429,0.1253419)
(-0.2057303,0.1245947) (-0.2061225,0.1238509) (-0.2065195,0.1231103) (-0.2069215,0.1223731) (-0.2073285,0.1216389) (-0.2077404,0.1209078)
(-0.2081574,0.1201797) (-0.2085795,0.1194546) (-0.2090066,0.1187323) (-0.2094389,0.1180128) (-0.2098765,0.1172960) (-0.2103192,0.1165818)
(-0.2107673,0.1158703) (-0.2112207,0.1151613) (-0.2116795,0.1144547) (-0.2121438,0.1137505) (-0.2126136,0.1130487) (-0.2130890,0.1123491)
(-0.2135700,0.1116517) (-0.2140568,0.1109565) (-0.2145493,0.1102634) (-0.2150477,0.1095723) (-0.2155520,0.1088831) (-0.2160623,0.1081959)
(-0.2165787,0.1075106) (-0.2171012,0.1068270) (-0.2176300,0.1061452) (-0.2181651,0.1054651) (-0.2187066,0.1047866) (-0.2192546,0.1041098)
(-0.2198093,0.1034344) (-0.2203706,0.1027605) (-0.2209388,0.1020880) (-0.2215138,0.1014170) (-0.2220959,0.1007472) (-0.2226852,0.1000787)
(-0.2232816,0.0994114) (-0.2238855,0.0987453) (-0.2244968,0.0980803) (-0.2251158,0.0974163) (-0.2257426,0.0967534) (-0.2263772,0.0960914)
(-0.2270198,0.0954304) (-0.2276707,0.0947702) (-0.2283299,0.0941108) (-0.2289975,0.0934522) (-0.2296738,0.0927943) (-0.2303590,0.0921370)
(-0.2310531,0.0914804) (-0.2317564,0.0908243) (-0.2324690,0.0901688) (-0.2331912,0.0895137) (-0.2339231,0.0888590) (-0.2346649,0.0882048)
(-0.2354169,0.0875508) (-0.2361793,0.0868971) (-0.2369523,0.0862436) (-0.2377361,0.0855903) (-0.2385309,0.0849370) (-0.2393371,0.0842839)
(-0.2401549,0.0836308) (-0.2409845,0.0829776) (-0.2418262,0.0823244) (-0.2426804,0.0816710) (-0.2435472,0.0810174) (-0.2444271,0.0803636)
(-0.2453204,0.0797094) (-0.2462273,0.0790549) (-0.2471483,0.0784001) (-0.2480836,0.0777447) (-0.2490338,0.0770888) (-0.2499991,0.0764324)
(-0.2509799,0.0757753) (-0.2519768,0.0751176) (-0.2529901,0.0744590) (-0.2540203,0.0737997) (-0.2550679,0.0731395) (-0.2561333,0.0724784)
(-0.2572172,0.0718163) (-0.2583200,0.0711532) (-0.2594422,0.0704889) (-0.2605846,0.0698234) (-0.2617477,0.0691567) (-0.2629322,0.0684887)
(-0.2641387,0.0678193) (-0.2653680,0.0671484) (-0.2666208,0.0664760) (-0.2678978,0.0658020) (-0.2692000,0.0651263) (-0.2705281,0.0644489)
(-0.2718831,0.0637696) (-0.2732659,0.0630884) (-0.2746774,0.0624052) (-0.2761189,0.0617199) (-0.2775913,0.0610324) (-0.2790958,0.0603427)
(-0.2806338,0.0596505) (-0.2822063,0.0589560) (-0.2838149,0.0582588) (-0.2854610,0.0575590) (-0.2871462,0.0568565) (-0.2888720,0.0561510)
(-0.2906402,0.0554426) (-0.2924527,0.0547310) (-0.2943113,0.0540162) (-0.2962183,0.0532981) (-0.2981758,0.0525764) (-0.3001862,0.0518512)
(-0.3022520,0.0511221) (-0.3043759,0.0503891) (-0.3065609,0.0496520) (-0.3088101,0.0489107) (-0.3111269,0.0481650) (-0.3135147,0.0474146)
(-0.3159777,0.0466595) (-0.3185199,0.0458994) (-0.3211459,0.0451341) (-0.3238606,0.0443634) (-0.3266695,0.0435870) (-0.3295784,0.0428048)
(-0.3325936,0.0420163) (-0.3357221,0.0412215) (-0.3389715,0.0404199) (-0.3423503,0.0396113) (-0.3458677,0.0387953) (-0.3495340,0.0379715)
(-0.3533604,0.0371397) (-0.3573596,0.0362993) (-0.3615457,0.0354499) (-0.3659344,0.0345910) (-0.3705435,0.0337221) (-0.3753931,0.0328426)
(-0.3805057,0.0319520) (-0.3859075,0.0310494) (-0.3916281,0.0301341) (-0.3977017,0.0292054) (-0.4041680,0.0282622) (-0.4110735,0.0273035)
(-0.4184726,0.0263281) (-0.4264302,0.0253346) (-0.4350236,0.0243216) (-0.4443463,0.0232872) (-0.4545126,0.0222294) (-0.4656638,0.0211457)
(-0.4779768,0.0200332) (-0.4916769,0.0188883) (-0.5070557,0.0177068) (-0.5244982,0.0164830) (-0.5445238,0.0152099) (-0.5678510,0.0138780)
(-0.5955012,0.0124740) (-0.6289684,0.0109787) (-0.6704940,0.0093625) (-0.7234761,0.0075765) (-0.7928512,0.0055352) (-0.8843815,0.0030871)
(-1.0000000,0.0000000) (-1.1307199,-0.0039470) (-1.2612092,-0.0088050) (-1.3820642,-0.0144735) (-1.4911470,-0.0208217) (-1.5894208,-0.0277434)
(-1.6785211,-0.0351600) (-1.7599741,-0.0430128) (-1.8350354,-0.0512572) (-1.9047032,-0.0598577) (-1.9697679,-0.0687858) (-2.0308589,-0.0780177)
(-2.0884832,-0.0875334) (-2.1430532,-0.0973155) (-2.1949087,-0.1073489) (-2.2443328,-0.1176205) (-2.2915631,-0.1281183) (-2.3368016,-0.1388316)
(-2.3802210,-0.1497508) (-2.4219704,-0.1608671) (-2.4621792,-0.1721723) (-2.5009605,-0.1836591) (-2.5384140,-0.1953203) (-2.5746275,-0.2071496)
(-2.6096794,-0.2191409) (-2.6436394,-0.2312885) (-2.6765702,-0.2435870) (-2.7085282,-0.2560313) (-2.7395642,-0.2686166) (-2.7697246,-0.2813384)
(-2.7990513,-0.2941921) (-2.8275827,-0.3071738) (-2.8553538,-0.3202792) (-2.8823969,-0.3335047) (-2.9087416,-0.3468464) (-2.9344151,-0.3603009)
(-2.9594427,-0.3738646) (-2.9838478,-0.3875342) (-3.0076521,-0.4013065) (-3.0308756,-0.4151783) (-3.0535372,-0.4291467) (-3.0756543,-0.4432086)
(-3.0972434,-0.4573611) (-3.1183197,-0.4716015) (-3.1388975,-0.4859270) (-3.1589905,-0.5003349) (-3.1786111,-0.5148227) (-3.1977713,-0.5293876)
(-3.2164825,-0.5440273) (-3.2347550,-0.5587392) (-3.2525991,-0.5735210) (-3.2700240,-0.5883702) (-3.2870389,-0.6032845) (-3.3036521,-0.6182617)
(-3.3198718,-0.6332994) (-3.3357056,-0.6483955) (-3.3511608,-0.6635477) (-3.3662442,-0.6787539) (-3.3809626,-0.6940119) (-3.3953221,-0.7093197)
(-3.4093288,-0.7246752) (-3.4229883,-0.7400763) (-3.4363063,-0.7555209) (-3.4492878,-0.7710072) (-3.4619379,-0.7865330) (-3.4742614,-0.8020964)
(-3.4862629,-0.8176956) (-3.4979468,-0.8333285) (-3.5093173,-0.8489932) (-3.5203786,-0.8646879) (-3.5311344,-0.8804107) (-3.5415886,-0.8961597)
(-3.5517448,-0.9119331) (-3.5616064,-0.9277290) (-3.5711769,-0.9435456) (-3.5804595,-0.9593812) (-3.5894572,-0.9752339) (-3.5981732,-0.9911020)
(-3.6066104,-1.0069837) (-3.6147715,-1.0228772) (-3.6226593,-1.0387808) (-3.6302765,-1.0546928) (-3.6376255,-1.0706114) (-3.6447090,-1.0865349)
(-3.6515293,-1.1024617) (-3.6580887,-1.1183899) (-3.6643895,-1.1343180) (-3.6704339,-1.1502443) (-3.6762241,-1.1661670) (-3.6817621,-1.1820845)
(-3.6870499,-1.1979951) (-3.6920897,-1.2138973) (-3.6968831,-1.2297893) (-3.7014323,-1.2456696) (-3.7057389,-1.2615364) (-3.7098047,-1.2773882)
(-3.7136316,-1.2932233) (-3.7172212,-1.3090402) (-3.7205751,-1.3248372) (-3.7236950,-1.3406127) (-3.7265825,-1.3563651) (-3.7292392,-1.3720929)
(-3.7316665,-1.3877943) (-3.7338661,-1.4034680) (-3.7358392,-1.4191122) (-3.7375874,-1.4347254) (-3.7391121,-1.4503060) (-3.7404147,-1.4658525)
(-3.7414965,-1.4813633) (-3.7423589,-1.4968368) (-3.7430032,-1.5122714) (-3.7434306,-1.5276657) (-3.7436425,-1.5430181) (-3.7436401,-1.5583270)
(-3.7434247,-1.5735908) (-3.7429974,-1.5888081) (-3.7423594,-1.6039773) (-3.7415120,-1.6190968) (-3.7404563,-1.6341652) (-3.7391935,-1.6491808)
(-3.7377246,-1.6641422) (-3.7360509,-1.6790478) (-3.7341733,-1.6938961) (-3.7320931,-1.7086856) (-3.7298113,-1.7234148) (-3.7273290,-1.7380820)
(-3.7246472,-1.7526859) (-3.7217669,-1.7672249) (-3.7186893,-1.7816974) (-3.7154154,-1.7961020) (-3.7119461,-1.8104371) (-3.7082825,-1.8247012)
(-3.7044256,-1.8388928) (-3.7003764,-1.8530105) (-3.6961358,-1.8670525) (-3.6917049,-1.8810176) (-3.6870846,-1.8949041) (-3.6822758,-1.9087105)
(-3.6772796,-1.9224353) (-3.6720969,-1.9360771) (-3.6667286,-1.9496342) (-3.6611757,-1.9631052) (-3.6554390,-1.9764885) (-3.6495196,-1.9897827)
(-3.6434183,-2.0029862) (-3.6371361,-2.0160975) (-3.6306738,-2.0291150) (-3.6240325,-2.0420373) (-3.6172128,-2.0548629) (-3.6102159,-2.0675901)
(-3.6030425,-2.0802175) (-3.5956935,-2.0927436) (-3.5881699,-2.1051668) (-3.5804724,-2.1174855) (-3.5726021,-2.1296983) (-3.5645597,-2.1418035)
(-3.5563461,-2.1537997) (-3.5479622,-2.1656854) (-3.5394088,-2.1774588) (-3.5306868,-2.1891186) (-3.5217970,-2.2006630) (-3.5127404,-2.2120907)
(-3.5035177,-2.2233999) (-3.4941297,-2.2345892) (-3.4845774,-2.2456569) (-3.4748615,-2.2566014) (-3.4649828,-2.2674212) (-3.4549423,-2.2781147)
(-3.4447407,-2.2886803) (-3.4343789,-2.2991163) (-3.4238576,-2.3094211) (-3.4131776,-2.3195931) (-3.4023399,-2.3296307) (-3.3913451,-2.3395323)
(-3.3801941,-2.3492961) (-3.3688877,-2.3589206) (-3.3574266,-2.3684040) (-3.3458117,-2.3777446) (-3.3340438,-2.3869409) (-3.3221235,-2.3959910)
(-3.3100517,-2.4048934) (-3.2978292,-2.4136461) (-3.2854567,-2.4222476) (-3.2729350,-2.4306960) (-3.2602648,-2.4389897) (-3.2474469,-2.4471268)
(-3.2344820,-2.4551055) (-3.2213708,-2.4629240) (-3.2081141,-2.4705805) (-3.1947127,-2.4780732) (-3.1811671,-2.4854001) (-3.1674781,-2.4925595)
(-3.1536465,-2.4995494) (-3.1396729,-2.5063679) (-3.1255579,-2.5130131) (-3.1113024,-2.5194830) (-3.0969068,-2.5257756) (-3.0823720,-2.5318889)
(-3.0676984,-2.5378209) (-3.0528869,-2.5435695) (-3.0379379,-2.5491326) (-3.0228522,-2.5545081) (-3.0076302,-2.5596939) (-2.9922727,-2.5646878)
(-2.9767801,-2.5694876) (-2.9611530,-2.5740910) (-2.9453920,-2.5784958) (-2.9294976,-2.5826996) (-2.9134704,-2.5867000) (-2.8973107,-2.5904948)
(-2.8810192,-2.5940813) (-2.8645962,-2.5974572) (-2.8480422,-2.6006199) (-2.8313577,-2.6035669) (-2.8145430,-2.6062954) (-2.7975984,-2.6088027)
(-2.7805245,-2.6110863) (-2.7633214,-2.6131431) (-2.7459895,-2.6149704) (-2.7285290,-2.6165652) (-2.7109403,-2.6179246) (-2.6932234,-2.6190453)
(-2.6753787,-2.6199244) (-2.6574062,-2.6205584) (-2.6393060,-2.6209442) (-2.6210783,-2.6210783) (-2.6027229,-2.6209571) (-2.5842401,-2.6205771)
(-2.5656296,-2.6199347) (-2.5468913,-2.6190258) (-2.5280252,-2.6178467) (-2.5090310,-2.6163933) (-2.4899084,-2.6146613) (-2.4706570,-2.6126466)
(-2.4512766,-2.6103444) (-2.4317666,-2.6077504) (-2.4121265,-2.6048597) (-2.3923557,-2.6016673) (-2.3724534,-2.5981680) (-2.3524190,-2.5943567)
(-2.3322515,-2.5902277) (-2.3119499,-2.5857753) (-2.2915133,-2.5809935) (-2.2709402,-2.5758761) (-2.2502295,-2.5704165) (-2.2293798,-2.5646080)
(-2.2083893,-2.5584436) (-2.1872564,-2.5519158) (-2.1659792,-2.5450169) (-2.1445555,-2.5377388) (-2.1229833,-2.5300729) (-2.1012599,-2.5220104)
(-2.0793827,-2.5135419) (-2.0573488,-2.5046574) (-2.0351550,-2.4953466) (-2.0127980,-2.4855985) (-1.9902739,-2.4754015) (-1.9675787,-2.4647433)
(-1.9447080,-2.4536109) (-1.9216570,-2.4419905) (-1.8984205,-2.4298674) (-1.8749928,-2.4172261) (-1.8513676,-2.4040499) (-1.8275384,-2.3903209)
(-1.8034976,-2.3760203) (-1.7792373,-2.3611276) (-1.7547486,-2.3456209) (-1.7300220,-2.3294766) (-1.7050469,-2.3126693) (-1.6798116,-2.2951715)
(-1.6543035,-2.2769535) (-1.6285085,-2.2579827) (-1.6024109,-2.2382239) (-1.5759937,-2.2176386) (-1.5492376,-2.1961843) (-1.5221214,-2.1738147)
(-1.4946213,-2.1504782) (-1.4667106,-2.1261180) (-1.4383593,-2.1006708) (-1.4095337,-2.0740658) (-1.3801952,-2.0462236) (-1.3503002,-2.0170543)
(-1.3197985,-1.9864564) (-1.2886325,-1.9543137) (-1.2567352,-1.9204927) (-1.2240287,-1.8848389) (-1.1904213,-1.8471723) (-1.1558043,-1.8072808)
(-1.1200477,-1.7649128) (-1.0829948,-1.7197666) (-1.0444542,-1.6714761) (-1.0041899,-1.6195925) (-0.9619077,-1.5635587) (-0.9172373,-1.5026759)
(-0.8697100,-1.4360619) (-0.8187369,-1.3626071) (-0.7636056,-1.2809603) (-0.7035665,-1.1896660) (-0.6382300,-1.0878367) (-0.5687781,-0.9772587)
(-0.5000000,-0.8660254) (-0.4395173,-0.7674404) (-0.3916320,-0.6893969) (-0.3551766,-0.6303369) (-0.3271388,-0.5853460) (-0.3049764,-0.5501919)
(-0.2869478,-0.5219562) (-0.2719068,-0.4987124) (-0.2590897,-0.4791764) (-0.2479744,-0.4624703) (-0.2381933,-0.4479765) (-0.2294807,-0.4352488)
(-0.2216392,-0.4239566) (-0.2145192,-0.4138495) (-0.2080051,-0.4047342) (-0.2020058,-0.3964588) (-0.1964487,-0.3889023) (-0.1912747,-0.3819667)
(-0.1864355,-0.3755720) (-0.1818912,-0.3696518) (-0.1776082,-0.3641508) (-0.1735582,-0.3590224) (-0.1697171,-0.3542269) (-0.1660642,-0.3497304)
(-0.1625817,-0.3455036) (-0.1592540,-0.3415212) (-0.1560676,-0.3377611) (-0.1530105,-0.3342040) (-0.1500724,-0.3308327) (-0.1472437,-0.3276321)
(-0.1445163,-0.3245889) (-0.1418827,-0.3216910) (-0.1393362,-0.3189279) (-0.1368708,-0.3162897) (-0.1344811,-0.3137679) (-0.1321622,-0.3113546)
(-0.1299096,-0.3090426) (-0.1277192,-0.3068256) (-0.1255873,-0.3046976) (-0.1235105,-0.3026532) (-0.1214856,-0.3006875) (-0.1195099,-0.2987960)
(-0.1175805,-0.2969745) (-0.1156951,-0.2952191) (-0.1138513,-0.2935263) (-0.1120471,-0.2918928) (-0.1102805,-0.2903156) (-0.1085497,-0.2887918)
(-0.1068530,-0.2873189) (-0.1051887,-0.2858944) (-0.1035554,-0.2845160) (-0.1019517,-0.2831816) (-0.1003762,-0.2818892) (-0.0988279,-0.2806370)
(-0.0973054,-0.2794231) (-0.0958078,-0.2782460) (-0.0943339,-0.2771041) (-0.0928829,-0.2759960) (-0.0914538,-0.2749204) (-0.0900458,-0.2738758)
(-0.0886580,-0.2728612) (-0.0872896,-0.2718754) (-0.0859400,-0.2709173) (-0.0846085,-0.2699859) (-0.0832942,-0.2690802) (-0.0819968,-0.2681994)
(-0.0807154,-0.2673424) (-0.0794497,-0.2665087) (-0.0781989,-0.2656972) (-0.0769627,-0.2649073) (-0.0757405,-0.2641384) (-0.0745318,-0.2633896)
(-0.0733361,-0.2626605) (-0.0721531,-0.2619503) (-0.0709824,-0.2612586) (-0.0698234,-0.2605846) (-0.0686759,-0.2599280) (-0.0675395,-0.2592882)
(-0.0664138,-0.2586648) (-0.0652985,-0.2580572) (-0.0641932,-0.2574650) (-0.0630977,-0.2568879) (-0.0620116,-0.2563254) (-0.0609347,-0.2557771)
(-0.0598666,-0.2552427) (-0.0588071,-0.2547217) (-0.0577560,-0.2542140) (-0.0567129,-0.2537191) (-0.0556777,-0.2532367) (-0.0546501,-0.2527666)
(-0.0536298,-0.2523084) (-0.0526167,-0.2518619) (-0.0516105,-0.2514268) (-0.0506111,-0.2510029) (-0.0496181,-0.2505898) (-0.0486315,-0.2501875)
(-0.0476511,-0.2497956) (-0.0466766,-0.2494140) (-0.0457078,-0.2490424) (-0.0447447,-0.2486806) (-0.0437870,-0.2483285) (-0.0428346,-0.2479858)
(-0.0418873,-0.2476524) (-0.0409449,-0.2473282) (-0.0400074,-0.2470129) (-0.0390744,-0.2467063) (-0.0381460,-0.2464085) (-0.0372220,-0.2461191)
(-0.0363022,-0.2458380) (-0.0353865,-0.2455652) (-0.0344747,-0.2453005) (-0.0335668,-0.2450438) (-0.0326626,-0.2447948) (-0.0317620,-0.2445537)
(-0.0308648,-0.2443201) (-0.0299710,-0.2440941) (-0.0290804,-0.2438755) (-0.0281929,-0.2436642) (-0.0273084,-0.2434601) (-0.0264268,-0.2432632)
(-0.0255480,-0.2430733) (-0.0246719,-0.2428904) (-0.0237983,-0.2427143) (-0.0229273,-0.2425451) (-0.0220585,-0.2423826) (-0.0211921,-0.2422268)
(-0.0203278,-0.2420776) (-0.0194656,-0.2419350) (-0.0186054,-0.2417988) (-0.0177471,-0.2416691) (-0.0168905,-0.2415457) (-0.0160357,-0.2414287)
(-0.0151824,-0.2413179) (-0.0143307,-0.2412134) (-0.0134804,-0.2411151) (-0.0126315,-0.2410229) (-0.0117838,-0.2409368) (-0.0109372,-0.2408569)
(-0.0100918,-0.2407829) (-0.0092473,-0.2407150) (-0.0084038,-0.2406531) (-0.0075611,-0.2405972) (-0.0067191,-0.2405472) (-0.0058778,-0.2405031)
(-0.0050370,-0.2404650) (-0.0041968,-0.2404327) (-0.0033569,-0.2404063) (-0.0025174,-0.2403858) (-0.0016781,-0.2403712) (-0.0008390,-0.2403624)
(-0.0000000,-0.2403595) (0.0008390,-0.2403624) (0.0016781,-0.2403712) (0.0025174,-0.2403858) (0.0033569,-0.2404063) (0.0041968,-0.2404327)
(0.0050370,-0.2404650) (0.0058778,-0.2405031) (0.0067191,-0.2405472) (0.0075611,-0.2405972) (0.0084038,-0.2406531) (0.0092473,-0.2407150)
(0.0100918,-0.2407829) (0.0109372,-0.2408569) (0.0117838,-0.2409368) (0.0126315,-0.2410229) (0.0134804,-0.2411151) (0.0143307,-0.2412134)
(0.0151824,-0.2413179) (0.0160357,-0.2414287) (0.0168905,-0.2415457) (0.0177471,-0.2416691) (0.0186054,-0.2417988) (0.0194656,-0.2419350)
(0.0203278,-0.2420776) (0.0211921,-0.2422268) (0.0220585,-0.2423826) (0.0229273,-0.2425451) (0.0237983,-0.2427143) (0.0246719,-0.2428904)
(0.0255480,-0.2430733) (0.0264268,-0.2432632) (0.0273084,-0.2434601) (0.0281929,-0.2436642) (0.0290804,-0.2438755) (0.0299710,-0.2440941)
(0.0308648,-0.2443201) (0.0317620,-0.2445537) (0.0326626,-0.2447948) (0.0335668,-0.2450438) (0.0344747,-0.2453005) (0.0353865,-0.2455652)
(0.0363022,-0.2458380) (0.0372220,-0.2461191) (0.0381460,-0.2464085) (0.0390744,-0.2467063) (0.0400074,-0.2470129) (0.0409449,-0.2473282)
(0.0418873,-0.2476524) (0.0428346,-0.2479858) (0.0437870,-0.2483285) (0.0447447,-0.2486806) (0.0457078,-0.2490424) (0.0466766,-0.2494140)
(0.0476511,-0.2497956) (0.0486315,-0.2501875) (0.0496181,-0.2505898) (0.0506111,-0.2510029) (0.0516105,-0.2514268) (0.0526167,-0.2518619)
(0.0536298,-0.2523084) (0.0546501,-0.2527666) (0.0556777,-0.2532367) (0.0567129,-0.2537191) (0.0577560,-0.2542140) (0.0588071,-0.2547217)
(0.0598666,-0.2552427) (0.0609347,-0.2557771) (0.0620116,-0.2563254) (0.0630977,-0.2568879) (0.0641932,-0.2574650) (0.0652985,-0.2580572)
(0.0664138,-0.2586648) (0.0675395,-0.2592882) (0.0686759,-0.2599280) (0.0698234,-0.2605846) (0.0709824,-0.2612586) (0.0721531,-0.2619503)
(0.0733361,-0.2626605) (0.0745318,-0.2633896) (0.0757405,-0.2641384) (0.0769627,-0.2649073) (0.0781989,-0.2656972) (0.0794497,-0.2665087)
(0.0807154,-0.2673424) (0.0819968,-0.2681994) (0.0832942,-0.2690802) (0.0846085,-0.2699859) (0.0859400,-0.2709173) (0.0872896,-0.2718754)
(0.0886580,-0.2728612) (0.0900458,-0.2738758) (0.0914538,-0.2749204) (0.0928829,-0.2759960) (0.0943339,-0.2771041) (0.0958078,-0.2782460)
(0.0973054,-0.2794231) (0.0988279,-0.2806370) (0.1003762,-0.2818892) (0.1019517,-0.2831816) (0.1035554,-0.2845160) (0.1051887,-0.2858944)
(0.1068530,-0.2873189) (0.1085497,-0.2887918) (0.1102805,-0.2903156) (0.1120471,-0.2918928) (0.1138513,-0.2935263) (0.1156951,-0.2952191)
(0.1175805,-0.2969745) (0.1195099,-0.2987960) (0.1214856,-0.3006875) (0.1235105,-0.3026532) (0.1255873,-0.3046976) (0.1277192,-0.3068256)
(0.1299096,-0.3090426) (0.1321622,-0.3113546) (0.1344811,-0.3137679) (0.1368708,-0.3162897) (0.1393362,-0.3189279) (0.1418827,-0.3216910)
(0.1445163,-0.3245889) (0.1472437,-0.3276321) (0.1500724,-0.3308327) (0.1530105,-0.3342040) (0.1560676,-0.3377611) (0.1592540,-0.3415212)
(0.1625817,-0.3455036) (0.1660642,-0.3497304) (0.1697171,-0.3542269) (0.1735582,-0.3590224) (0.1776082,-0.3641508) (0.1818912,-0.3696518)
(0.1864355,-0.3755720) (0.1912747,-0.3819667) (0.1964487,-0.3889023) (0.2020058,-0.3964588) (0.2080051,-0.4047342) (0.2145192,-0.4138495)
(0.2216392,-0.4239566) (0.2294807,-0.4352488) (0.2381933,-0.4479765) (0.2479744,-0.4624703) (0.2590897,-0.4791764) (0.2719068,-0.4987124)
(0.2869478,-0.5219562) (0.3049764,-0.5501919) (0.3271388,-0.5853460) (0.3551766,-0.6303369) (0.3916320,-0.6893969) (0.4395173,-0.7674404)
(0.5000000,-0.8660254) (0.5687781,-0.9772587) (0.6382300,-1.0878367) (0.7035665,-1.1896660) (0.7636056,-1.2809603) (0.8187369,-1.3626071)
(0.8697100,-1.4360619) (0.9172373,-1.5026759) (0.9619077,-1.5635587) (1.0041899,-1.6195925) (1.0444542,-1.6714761) (1.0829948,-1.7197666)
(1.1200477,-1.7649128) (1.1558043,-1.8072808) (1.1904213,-1.8471723) (1.2240287,-1.8848389) (1.2567352,-1.9204927) (1.2886325,-1.9543137)
(1.3197985,-1.9864564) (1.3503002,-2.0170543) (1.3801952,-2.0462236) (1.4095337,-2.0740658) (1.4383593,-2.1006708) (1.4667106,-2.1261180)
(1.4946213,-2.1504782) (1.5221214,-2.1738147) (1.5492376,-2.1961843) (1.5759937,-2.2176386) (1.6024109,-2.2382239) (1.6285085,-2.2579827)
(1.6543035,-2.2769535) (1.6798116,-2.2951715) (1.7050469,-2.3126693) (1.7300220,-2.3294766) (1.7547486,-2.3456209) (1.7792373,-2.3611276)
(1.8034976,-2.3760203) (1.8275384,-2.3903209) (1.8513676,-2.4040499) (1.8749928,-2.4172261) (1.8984205,-2.4298674) (1.9216570,-2.4419905)
(1.9447080,-2.4536109) (1.9675787,-2.4647433) (1.9902739,-2.4754015) (2.0127980,-2.4855985) (2.0351550,-2.4953466) (2.0573488,-2.5046574)
(2.0793827,-2.5135419) (2.1012599,-2.5220104) (2.1229833,-2.5300729) (2.1445555,-2.5377388) (2.1659792,-2.5450169) (2.1872564,-2.5519158)
(2.2083893,-2.5584436) (2.2293798,-2.5646080) (2.2502295,-2.5704165) (2.2709402,-2.5758761) (2.2915133,-2.5809935) (2.3119499,-2.5857753)
(2.3322515,-2.5902277) (2.3524190,-2.5943567) (2.3724534,-2.5981680) (2.3923557,-2.6016673) (2.4121265,-2.6048597) (2.4317666,-2.6077504)
(2.4512766,-2.6103444) (2.4706570,-2.6126466) (2.4899084,-2.6146613) (2.5090310,-2.6163933) (2.5280252,-2.6178467) (2.5468913,-2.6190258)
(2.5656296,-2.6199347) (2.5842401,-2.6205771) (2.6027229,-2.6209571) (2.6210783,-2.6210783) (2.6393060,-2.6209442) (2.6574062,-2.6205584)
(2.6753787,-2.6199244) (2.6932234,-2.6190453) (2.7109403,-2.6179246) (2.7285290,-2.6165652) (2.7459895,-2.6149704) (2.7633214,-2.6131431)
(2.7805245,-2.6110863) (2.7975984,-2.6088027) (2.8145430,-2.6062954) (2.8313577,-2.6035669) (2.8480422,-2.6006199) (2.8645962,-2.5974572)
(2.8810192,-2.5940813) (2.8973107,-2.5904948) (2.9134704,-2.5867000) (2.9294976,-2.5826996) (2.9453920,-2.5784958) (2.9611530,-2.5740910)
(2.9767801,-2.5694876) (2.9922727,-2.5646878) (3.0076302,-2.5596939) (3.0228522,-2.5545081) (3.0379379,-2.5491326) (3.0528869,-2.5435695)
(3.0676984,-2.5378209) (3.0823720,-2.5318889) (3.0969068,-2.5257756) (3.1113024,-2.5194830) (3.1255579,-2.5130131) (3.1396729,-2.5063679)
(3.1536465,-2.4995494) (3.1674781,-2.4925595) (3.1811671,-2.4854001) (3.1947127,-2.4780732) (3.2081141,-2.4705805) (3.2213708,-2.4629240)
(3.2344820,-2.4551055) (3.2474469,-2.4471268) (3.2602648,-2.4389897) (3.2729350,-2.4306960) (3.2854567,-2.4222476) (3.2978292,-2.4136461)
(3.3100517,-2.4048934) (3.3221235,-2.3959910) (3.3340438,-2.3869409) (3.3458117,-2.3777446) (3.3574266,-2.3684040) (3.3688877,-2.3589206)
(3.3801941,-2.3492961) (3.3913451,-2.3395323) (3.4023399,-2.3296307) (3.4131776,-2.3195931) (3.4238576,-2.3094211) (3.4343789,-2.2991163)
(3.4447407,-2.2886803) (3.4549423,-2.2781147) (3.4649828,-2.2674212) (3.4748615,-2.2566014) (3.4845774,-2.2456569) (3.4941297,-2.2345892)
(3.5035177,-2.2233999) (3.5127404,-2.2120907) (3.5217970,-2.2006630) (3.5306868,-2.1891186) (3.5394088,-2.1774588) (3.5479622,-2.1656854)
(3.5563461,-2.1537997) (3.5645597,-2.1418035) (3.5726021,-2.1296983) (3.5804724,-2.1174855) (3.5881699,-2.1051668) (3.5956935,-2.0927436)
(3.6030425,-2.0802175) (3.6102159,-2.0675901) (3.6172128,-2.0548629) (3.6240325,-2.0420373) (3.6306738,-2.0291150) (3.6371361,-2.0160975)
(3.6434183,-2.0029862) (3.6495196,-1.9897827) (3.6554390,-1.9764885) (3.6611757,-1.9631052) (3.6667286,-1.9496342) (3.6720969,-1.9360771)
(3.6772796,-1.9224353) (3.6822758,-1.9087105) (3.6870846,-1.8949041) (3.6917049,-1.8810176) (3.6961358,-1.8670525) (3.7003764,-1.8530105)
(3.7044256,-1.8388928) (3.7082825,-1.8247012) (3.7119461,-1.8104371) (3.7154154,-1.7961020) (3.7186893,-1.7816974) (3.7217669,-1.7672249)
(3.7246472,-1.7526859) (3.7273290,-1.7380820) (3.7298113,-1.7234148) (3.7320931,-1.7086856) (3.7341733,-1.6938961) (3.7360509,-1.6790478)
(3.7377246,-1.6641422) (3.7391935,-1.6491808) (3.7404563,-1.6341652) (3.7415120,-1.6190968) (3.7423594,-1.6039773) (3.7429974,-1.5888081)
(3.7434247,-1.5735908) (3.7436401,-1.5583270) (3.7436425,-1.5430181) (3.7434306,-1.5276657) (3.7430032,-1.5122714) (3.7423589,-1.4968368)
(3.7414965,-1.4813633) (3.7404147,-1.4658525) (3.7391121,-1.4503060) (3.7375874,-1.4347254) (3.7358392,-1.4191122) (3.7338661,-1.4034680)
(3.7316665,-1.3877943) (3.7292392,-1.3720929) (3.7265825,-1.3563651) (3.7236950,-1.3406127) (3.7205751,-1.3248372) (3.7172212,-1.3090402)
(3.7136316,-1.2932233) (3.7098047,-1.2773882) (3.7057389,-1.2615364) (3.7014323,-1.2456696) (3.6968831,-1.2297893) (3.6920897,-1.2138973)
(3.6870499,-1.1979951) (3.6817621,-1.1820845) (3.6762241,-1.1661670) (3.6704339,-1.1502443) (3.6643895,-1.1343180) (3.6580887,-1.1183899)
(3.6515293,-1.1024617) (3.6447090,-1.0865349) (3.6376255,-1.0706114) (3.6302765,-1.0546928) (3.6226593,-1.0387808) (3.6147715,-1.0228772)
(3.6066104,-1.0069837) (3.5981732,-0.9911020) (3.5894572,-0.9752339) (3.5804595,-0.9593812) (3.5711769,-0.9435456) (3.5616064,-0.9277290)
(3.5517448,-0.9119331) (3.5415886,-0.8961597) (3.5311344,-0.8804107) (3.5203786,-0.8646879) (3.5093173,-0.8489932) (3.4979468,-0.8333285)
(3.4862629,-0.8176956) (3.4742614,-0.8020964) (3.4619379,-0.7865330) (3.4492878,-0.7710072) (3.4363063,-0.7555209) (3.4229883,-0.7400763)
(3.4093288,-0.7246752) (3.3953221,-0.7093197) (3.3809626,-0.6940119) (3.3662442,-0.6787539) (3.3511608,-0.6635477) (3.3357056,-0.6483955)
(3.3198718,-0.6332994) (3.3036521,-0.6182617) (3.2870389,-0.6032845) (3.2700240,-0.5883702) (3.2525991,-0.5735210) (3.2347550,-0.5587392)
(3.2164825,-0.5440273) (3.1977713,-0.5293876) (3.1786111,-0.5148227) (3.1589905,-0.5003349) (3.1388975,-0.4859270) (3.1183197,-0.4716015)
(3.0972434,-0.4573611) (3.0756543,-0.4432086) (3.0535372,-0.4291467) (3.0308756,-0.4151783) (3.0076521,-0.4013065) (2.9838478,-0.3875342)
(2.9594427,-0.3738646) (2.9344151,-0.3603009) (2.9087416,-0.3468464) (2.8823969,-0.3335047) (2.8553538,-0.3202792) (2.8275827,-0.3071738)
(2.7990513,-0.2941921) (2.7697246,-0.2813384) (2.7395642,-0.2686166) (2.7085282,-0.2560313) (2.6765702,-0.2435870) (2.6436394,-0.2312885)
(2.6096794,-0.2191409) (2.5746275,-0.2071496) (2.5384140,-0.1953203) (2.5009605,-0.1836591) (2.4621792,-0.1721723) (2.4219704,-0.1608671)
(2.3802210,-0.1497508) (2.3368016,-0.1388316) (2.2915631,-0.1281183) (2.2443328,-0.1176205) (2.1949087,-0.1073489) (2.1430532,-0.0973155)
(2.0884832,-0.0875334) (2.0308589,-0.0780177) (1.9697679,-0.0687858) (1.9047032,-0.0598577) (1.8350354,-0.0512572) (1.7599741,-0.0430128)
(1.6785211,-0.0351600) (1.5894208,-0.0277434) (1.4911470,-0.0208217) (1.3820642,-0.0144735) (1.2612092,-0.0088050) (1.1307199,-0.0039470)
(1.0000000,-0.0000000)}
\begin{minipage}[t]{0.48\textwidth}
\vspace{0pt}
\centering
\begin{tikzpicture}[x=0.79cm,y=0.79cm,>=Stealth,font=\small]
\path[use as bounding box] (-4.2,-3.0) rectangle (4.8,4.75);
\filldraw[fill=gray!10,draw=blue!55!black,line width=0.65pt]
 plot coordinates {\CubicBoundaryCoordinates} -- cycle;
\draw[gray,densely dashed,line width=0.45pt] (0.52,-0.16) rectangle (1.50,3.5);
\draw[red!65!black,->,line width=0.75pt] (1,0) -- (1.0833333,3);
\fill (1,0) circle (1.4pt) node[right=3pt] {$1$};
\fill (1.0833333,3) circle (1.6pt) node[right=3pt] {$a$};
\node at (-1.75,-1.30) {$\Omega_{3,36}$};
\node at (0.3,4.55) {The whole domain};
\end{tikzpicture}
\end{minipage}\hfill
\begin{minipage}[t]{0.49\textwidth}
\vspace{0pt}
\centering
\begin{tikzpicture}[x=1.60cm,y=1.60cm,>=Stealth,font=\small]
\path[use as bounding box] (-0.15,-0.43) rectangle (4.00,3.78);
\begin{scope}
\clip (0.52,-0.16) rectangle (1.50,3.5);
\filldraw[fill=gray!10,draw=blue!55!black,line width=0.75pt]
 plot coordinates {\CubicBoundaryCoordinates} -- cycle;
\end{scope}
\draw[gray,line width=0.35pt] (0.52,-0.16) rectangle (1.50,3.5);
\draw[red!65!black,line width=0.85pt] (1,0) -- (1.059218724,2.131874047);
\draw[red!65!black,densely dashed,line width=0.85pt]
 (1.059218724,2.131874047) -- (1.0833333,3);
\draw[red!65!black,->,line width=0.85pt] (1,0) -- (1.021,0.756);
\draw[gray,line width=0.5pt] (0.57,0.01194) -- (1.43,-0.01194);
\fill (1,0) circle (1.6pt) node[below=4pt] {$\zeta=1$};
\fill (1.059218724,2.131874047) circle (1.5pt);
\fill (1.0833333,3) circle (1.8pt);
\draw[gray,line width=0.4pt] (1.11,3) -- (1.70,3);
\node[anchor=west] at (1.72,3) {$a=\frac{13}{12}+3i$};
\draw[gray,line width=0.4pt] (1.06,1.08) -- (1.70,1.08);
\node[anchor=west,align=left] at (1.72,1.08)
 {outside $\Omega$\\along the normal};
\draw[gray,line width=0.4pt] (1.059218724,2.131874047) -- (1.70,2.131874047);
\node[anchor=west] at (1.72,2.131874047) {returns inside};
\node[blue!55!black] at (0.75,2.63) {$\Omega$};
\node at (1.82,3.67) {An enlarged view of the return};
\end{tikzpicture}
\end{minipage}
\endgroup
\caption{The cubic seed $|z^3+36i|<\sqrt{1297}$ of \cref{sec:cubic}, with its interior shaded.
The boxed region is enlarged on the right. The outward normal from $1$ crosses the exterior along the solid red segment. Along the dashed
segment, the ray has returned to the interior.}
\label{fig:cubic}
\end{figure}

\Cref{sec:explicit-29984} gives a polynomial in $\Q(i)[z]$ of degree $29\,984$, with simple critical points and $\Smal(p,0)>1.00000049595801$, certified by interval arithmetic. By considering perturbations $\epsilon z^n$ for small $\epsilon$ and applying Rouch\'e's theorem, this example yields counterexamples in every degree $n\ge29\,984$.

\subsection{The mechanism}

Here is the algebra behind the construction. Let $K=\{z:|q(z)|\le1\}$ be a polynomial
lemniscate that is the closure of a Jordan domain, and let
$a\in\operatorname{int}K$. We first find a polynomial $f$ with
$f(a)=0$, $f'(a)=1$, $f'\ne0$ on $K$, and
\[
 \min_{z\in\partial K}\left|\frac{f(z)}{z-a}\right|>1.
\]
When \cref{thm:normal-main} applies, we construct a holomorphic model
and approximate it by $f$.

Now add a high power and consider the polynomial
\begin{equation}\label{eq:intro-power}
 P_N(z)=f(z)+\frac{q(z)^N-q(a)^N}{N}.
\end{equation}
On $K$, the added term is small. Outside, its derivative grows rapidly. For large $N$, a thin collar of $\partial K$ contains every critical point of $P_N$. At such a point $c$, where
$q'(c)\ne0$, the critical-point equation gives
\begin{equation}\label{eq:intro-identity}
 P_N(c)-f(c)
 =-\frac{q(c)f'(c)}{Nq'(c)}-\frac{q(a)^N}{N}.
\end{equation}
The error is therefore uniformly small at all critical points, including
those outside $K$.

Since every critical point lies near the boundary and the value error
is uniformly small, all the quotients $|P_N(c)/(c-a)|$ remain bounded
above $1$ for large $N$. Dividing by $|P_N'(a)|$, which tends to $1$,
preserves this strict inequality. Thus
$\Smal(P_N,a)>1$. For arbitrary analytic Jordan domains, we use Faber polynomials instead of the powers $q^N$;
see \cref{sec:faber}.

\subsection{Relation to the original construction}

The original construction found by GPT-6 Astra \cite{Adamczewski} uses a quartic lemniscate with
\[
 q(z)=\frac{z^4+64i}{\sqrt{4097}},\qquad
 a=\frac{33}{32}+2i.
\]
We make the following changes:
\begin{enumerate}
\item We show that the construction works whenever an outward normal
re-enters the domain and the curvature at its starting point satisfies
the inequality in \labelcref{eq:normal-return}. This is a weaker condition than the condition in the original construction.
\item We replace the primitive of a high power with the direct perturbation \labelcref{eq:intro-power}. At a critical point, \labelcref{eq:intro-identity} replaces the integral estimate by an algebraic expression for the value error. We work this out for a cubic lemniscate.
\item The discovery transcript of GPT-6 Astra already gives a transfer on analytic Jordan domains using primitives of Faber polynomials \cite{DiscoveryTranscript}. Here we use the Faber polynomials directly, which simplifies the critical-value estimate. We also locate every critical point to first order and determine the limiting distribution.
\end{enumerate}

\subsection{Outline} We assume familiarity with holomorphic functions, power series, Cauchy's formula, and the maximum modulus principle. Other tools are recalled when needed. Throughout,
\[
 \D=\{w\in\C:|w|<1\},\qquad \T=\partial\D.
\]
A conformal map is an injective holomorphic map. Its derivative never vanishes, and its inverse is holomorphic on the image. A critical point $c$ of a polynomial $p$ is \emph{simple} if $p''(c)\ne0$.

Background on the mean value problem and its dual is collected in \Cref{sec:previous}.
\Cref{sec:cubic} gives the cubic example; \Cref{sec:geometry} explains its geometry. \Cref{sec:faber} gives the transfer on arbitrary analytic Jordan domains and proves \cref{thm:normal-main}. \Cref{sec:sampling} locates the critical points to first order and determines their limiting distribution. The limit of the minimum in \cref{subsec:boundary-limit} then gives the boundary formulation of the optimal universal constant in \cref{prop:boundary-universal}.

\subsection{AI disclosure}\label{subsec:provenance}

Adamczewski designed and ran the experiment in which GPT-6 Astra
found the construction and its formal proof
\cite{Adamczewski}. The model was asked to prove or disprove the Lean statement of Smale's mean value conjecture with constant $1$ from Formal Conjectures \cite{FormalConjectures}, Google DeepMind's open-source library of formalized open problems. The successful attempt took about 15 hours and cost \$265 at the model's actual token prices.

Colbrook wrote the exposition in this paper after discussing it with Adamczewski and studying the experiment's transcript to understand the underlying mechanism. ChatGPT 6 was used for minor language editing of the penultimate draft in this stage only. This work also led to the explicit example in \cref{sec:explicit-29984}, which has a much smaller degree than the initial experiment suggested. That example was found by a parameter optimization run on Codex after human input on ways to lower the degree. The authors are responsible for the results, proofs, computations, accompanying code, and final manuscript.

\subsection*{Addendum}
While we were putting the finishing touches to this paper, the work of Jatar and Ng \cite{JatarNgBlaschke2026} appeared. They characterize the optimal universal constant $M$ through an extremal problem for univalent functions and finite Blaschke products, obtaining the lower bound $M\ge\sqrt{8081/6656}>1.1018$. Their work also gives the boundary characterization in \cref{prop:boundary-universal}, using Runge approximation and Hilbert's lemniscate theorem to pass from holomorphic models to polynomials. They further settle Problems~1A and~1B of
Smale's original paper \cite{Smale1981}, showing that the respective constants $4$ and $2$ cannot be reduced.

\section{Previous work on the mean value problem}\label{sec:previous}

Earlier results explain both the conjecture's plausibility and the role of geometry in attempts to prove it. Without loss of generality, we normalize polynomials by $p(0)=0$ and $p'(0)=1$. Define
\begin{equation}\label{eq:sharp-constants}
 M_n=\sup_{\deg p=n}\Smal(p,0),\qquad
 M=\sup_{n\ge2}M_n,
\end{equation}
where the supremum defining $M_n$ is over all normalised polynomials of degree $n$. Thus $M_n$ is sharp in degree $n$ and $M$ is sharp over all degrees. Smale conjectured that $M_n=(n-1)/n$, and hence that $M=1$.

\subsection{Newton's method and the origin of the constant 4}

Smale introduced the problem while studying the complexity of finding polynomial zeros by Newton's method \cite{Smale1981}. To continue a local inverse, one follows a solution of $p(z)=w$ as $w$ varies and critical values correspond to possible obstructions. Smale returned to the mean value problem in the addenda to the expanded version of his list of mathematical problems for the next century \cite[Addendum~1, pp.~21--22]{Smale1998}.

Smale's proof of the bound $4$, with crucial help acknowledged from Mike Shub, is geometric
\cite[Part~III, p.~33]{Smale1981}. Let $g$ be the inverse branch of $p$ with $g(0)=0$, defined on the largest disk $\{|w|<R\}$. Since $p(0)=0$ and $p'(0)=1$, we have $g'(0)=1$. By maximality, some critical value $p(c)$ must lie on $\{|w|=R\}$, with $c$ on the boundary of $g(\{|w|<R\})$. By the Koebe quarter theorem \cite{Duren}, the image of $g$ contains the disk of radius $R|g'(0)|/4=R/4$ centered at the origin. Hence $|c|\ge R/4$, and therefore
$
 |{p(c)}/{c}|\le 4.
$
To improve this bound, one must use the fact that the inverse comes from a polynomial.

\subsection{Degree-dependent improvements}

The following table lists bounds valid for every normalized polynomial in the stated degree range.\vspace{2mm}

\begin{center}
\renewcommand{\arraystretch}{1.5}
\begin{tabular}{@{}p{0.35\textwidth}p{0.44\textwidth}p{0.14\textwidth}@{}}
\hline
Authors & Upper bound for $M_n$ & Degrees\\
\hline
Beardon--Minda--Ng \cite{BeardonMindaNg}
 & $4^{(n-2)/(n-1)}$ & $n\ge2$\\
Conte--Fujikawa--Lakic \cite{ConteFujikawaLakic}
 & $4(n-1)/(n+1)$ & $n\ge2$\\
Fujikawa--Sugawa \cite{FujikawaSugawa}
 & $\displaystyle\frac{4\bigl(1+(n-2)4^{-1/(n-1)}\bigr)}{n+1}$ & $n\ge2$\\
Crane \cite{Crane2007}
 & $4-2.263/\sqrt n$ \quad(strict) & $n\ge8$\\
Dubinin \cite{Dubinin2006,DubininBlaschke2024}
 & $\tfrac{n-1}{n}\,4^{(n-2)/(n-1)}$ & $n\ge2$\\
\hline
\end{tabular}
\end{center}\vspace{2mm}

All five bounds are below $4$ and tend to $4$ as $n\to\infty$.
Beardon, Minda, and Ng based their bound on the hyperbolic metric \cite{BeardonMindaNg}. Coefficient estimates for injective holomorphic functions gave the bounds of Conte, Fujikawa, and Lakic \cite{ConteFujikawaLakic}, and Fujikawa and Sugawa combined the two approaches \cite{FujikawaSugawa}. Crane's stronger asymptotic improvement comes from logarithmic capacity and an extremal problem for connected sets containing prescribed points \cite{Crane2007}. Dubinin stated a further bound in 2006; his 2024 paper obtains it from finite Blaschke products \cite{Dubinin2006,DubininBlaschke2024}.

\subsection{Positive results under restrictions}

For some classes of polynomials, the proposed sharp bound is known to hold. Tischler proved several basic cases by studying critical points and critical values \cite{Tischler}. Two such classes are normalized polynomials with all zeros real, and those with all nonzero zeros of equal modulus. Hinkkanen and Kayumov discuss the earlier work of Palais on real zeros and of Sikorav in low degrees \cite{HinkkanenKayumovRays}. For background and further references, see Rahman and Schmeisser, Schmeisser's survey, and Sheil-Small \cite{RahmanSchmeisser,SchmeisserSurvey,SheilSmall}.

One can also restrict the critical points. Dubinin's bound is $(n-1)/n$ if all critical points have equal modulus \cite{Dubinin2006}. Hinkkanen and Kayumov's degree-independent bound is $2/3$ when all critical points are real; a cubic attains equality. They also studied configurations on two rays \cite{HinkkanenKayumov2010,HinkkanenKayumovRays}. Ng's upper bound $2$ applies to a class containing all odd polynomials with nonzero linear coefficient \cite{NgOdd}.

One can instead look for extremisers, or study a product of the quotients. Crane proved that extremisers exist and studied their geometry \cite{CraneExtremals}. Dimitrov related the product of the quotients to pairwise distances between zeros, with an electrostatic interpretation through logarithmic energy \cite{Dimitrov}. The minimum is at most the geometric mean.

\subsection{Stronger conjectures and numerical evidence}

Sendov and Marinov's computations, for degrees up to ten \cite{MarinovSendov}, give numerical evidence about candidate extremisers in low degree. 

Tischler proposed a stronger conjecture. His question was whether a normalised polynomial of degree $n$ always has a critical point $c$ satisfying
\begin{equation}\label{eq:tischler-disk}
 \left|\frac{p(c)}{c}-\frac12\right|\le\frac12-\frac1n.
\end{equation}
This restricts both the argument and the modulus, and the triangle inequality would then give $|p(c)/c|\le(n-1)/n$. Tyson's counterexamples to \labelcref{eq:tischler-disk} exist in every degree $n\ge5$. He also proved local positive results near a natural extremal family \cite{Tyson}. Failure of the disk condition does not by itself give $\Smal(p,0)>1$.

\subsection{Blaschke products and the dual problem}

There is a related mean value problem for holomorphic self-maps of the disk. For finite Blaschke products, Sheil-Small developed mean value inequalities \cite{SheilSmall}. Ng and Zhang proved degree-dependent estimates for both the mean value and dual problems in this setting; one dual estimate also transfers to polynomials \cite{NgZhang}. Dubinin's later work gives sharp bounds for the minimum modulus of critical values of Blaschke products, as well as estimates for divided differences \cite{DubininBlaschke2022,DubininBlaschke2024}.

The dual problem considers the largest critical-point quotient and was proposed independently by Dubinin and Sugawa and by Ng \cite{DubininSugawa,NgZhang}. Define
\begin{equation}\label{eq:dual-functional}
 \mathcal D(p)=\max_{p'(c)=0}\left|\frac{p(c)}{c}\right|.
\end{equation}
The conjecture is that $\mathcal D(p)\ge 1/n$ for every normalized polynomial of degree $n$. Equality is attained by
\[
 p(z)=\frac{(1+z)^n-1}{n},
\]
whose only critical point is $-1$.
The 2009 lower bound of Dubinin and Sugawa was $1/(n4^n)$ \cite{DubininSugawa}. With his finite-increment theorem, Dubinin improved it to $1/n^2$ \cite{Dubinin2010}. Ng and Zhang's different argument with finite Blaschke products gives a bound strictly greater than $4^{-n}$ \cite{NgZhang}. Hinkkanen, Kayumov, and Khammatova proved the conjecture in degrees five and six, and then seven \cite{HKK2019,HKK2025}. For odd polynomials, Tang's preprint uses symmetry to reduce a squared quotient to a polynomial critical-point problem and thereby proves the conjecture \cite{Tang2025}. A stronger inequality proposed by Kayumov and Khammatova implies the dual conjecture. They proved it when all critical points lie on one ray \cite{KayumovKhammatova2026}.

\subsection{Recent asymptotic results}
For certain polynomial sequences, Jatar and Ng obtain upper bounds approaching $1$ from Cunningham's refinement of the Koebe quarter theorem in terms of logarithmic capacity \cite{JatarNg,Cunningham}. Their hypotheses on the leading coefficient require an extra normalization. Their bound for the dual problem, which needs no such hypothesis, is
\begin{equation}\label{eq:recent-dual-bound}
 \mathcal D(p)\ge\frac{(n-\tfrac12)^{1/n}}{n^2}.
\end{equation}
The ingredients are Eremenko's derivative bounds for polynomial lemniscates and the deformation methods of Eremenko and Hayman \cite{Eremenko,EremenkoHayman,JatarNg}.

\section{A counterexample from a cubic seed}\label{sec:cubic}

We now consider a family of polynomial counterexamples generated by a cubic seed. This will explain the mechanism in a concrete setting. The idea is as follows. On a domain $\Omega$, choose an interior point $a$ and a holomorphic function $F$ satisfying $F(a)=0$, $F'(a)=1$, and $F'\ne0$ on $\ol\Omega$, whose divided-difference modulus exceeds $1$ on $\partial\Omega$. A sufficiently accurate polynomial approximation preserves these properties. Adding a sufficiently high power then pushes every critical point close to the boundary, and the critical-point equation keeps its values close to those of the approximation.

\subsection{The domain and its conformal coordinate}\label{subsec:cubic-coordinate}

Take
\begin{equation}\label{eq:cubic-data}
 q(z)=\frac{z^3+36i}{\sqrt{1297}},\qquad
 \Omega=\{z:|q(z)|<1\},\qquad
 a=\frac{13}{12}+3i.
\end{equation}
The domain is the inverse image under $z\mapsto z^3$ of a disk containing the origin, with $1\in\partial\Omega$. It is non-convex, allowing the outward normal return in \cref{fig:cubic}. We will use a conformal coordinate that maps $\Omega$ onto $\D$ to construct the holomorphic model.

\begin{proposition}[The cubic conformal coordinate]\label{prop:cubic-coordinates}
The domain $\Omega$ is an analytic Jordan domain. With the branch chosen so that $\Phi(1)=1$, the function
\begin{equation}\label{eq:cubic-phi}
 \Phi(w)=w(1+36i-36iw^3)^{-1/3}
\end{equation}
maps $\D$ conformally onto $\Omega$ and extends conformally across $\ol\D$. Moreover,
\[
 \Phi(1)=1,\qquad \Phi'(1)=1+36i,\qquad
 \frac{\Phi''(1)}{\Phi'(1)}=144i.
\]
\end{proposition}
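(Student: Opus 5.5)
The plan is to recognise $\Phi$ as a cube-root lift of a Möbius map that sends $\D$ onto the disk $\{u:|u+36i|<\sqrt{1297}\}$. Injectivity and surjectivity then come from the $\mathbb{Z}/3$ symmetry $z\mapsto\omega z$ with $\omega=e^{2\pi i/3}$, and the derivative values come from logarithmic differentiation.

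\emph{Step 1 (the Möbius map).} Put $D'=\{u:|u+36i|<\sqrt{1297}\}$, so that $\Omega=\{z:z^3\in D'\}$. Since $36<\sqrt{1297}$, we have $0\in D'$. Set
\[
T(v)=\frac{v}{1+36i-36iv},
\]
so that $\Phi(w)^3=T(w^3)$ formally. The only pole of $T$ is at $v_*=(1+36i)/(36i)$, and $|v_*|=\sqrt{1297}/36>1$, so $T$ maps $\D$ onto an open disk. To identify that disk, compute
\[
T(v)+36i=\frac{1297v-(1296-36i)}{1+36i-36iv},
\]
using $1296-36i=-36i(1+36i)$ and $|1+36i|^2=1297$. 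A direct modulus computation for $|v|=1$ then shows $|T(v)+36i|=\sqrt{1297}$. Together with $T(0)=0\in D'$, this gives $T(\D)=D'$, and also $T(1)=1$.

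\emph{Step 2 (the branch, injectivity, surjectivity).} Let $\rho=(\sqrt{1297}/36)^{1/3}>1$. On the disk $\{|w|<\rho\}$ the function $1+36i-36iw^3$ has no zeros, and this disk is simply connected. Hence a holomorphic branch of $(1+36i-36iw^3)^{-1/3}$ exists, and we fix it by requiring $\Phi(1)=1$; this works because the bracket equals $1$ at $w=1$. By uniqueness of the branch, $\Phi(\omega w)=\omega\Phi(w)$.

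For injectivity, suppose $\Phi(w_1)=\Phi(w_2)$ with $|w_1|,|w_2|<\rho$. Cubing gives $T(w_1^3)=T(w_2^3)$, so $w_1^3=w_2^3$ because $T$ is injective. Thus $w_2=\omega^k w_1$, and then $\Phi(w_1)=\omega^k\Phi(w_1)$. This forces $k\equiv0 \pmod 3$ or $\Phi(w_1)=0$, i.e.\ $w_1=0=w_2$. So $\Phi$ is injective on $\{|w|<\rho\}$, and hence $\Phi'\ne0$ there.

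For surjectivity, take $z\in\Omega$. Then $z^3\in D'$, so $v=T^{-1}(z^3)\in\D$. Let $w$ be any cube root of $v$. The three values $\Phi(\omega^k w)$ are exactly the three cube roots of $z^3$, so one of them equals $z$. Hence $\Phi(\D)=\Omega$.

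Consequently $\Phi$ is conformal on a neighbourhood of $\ol\D$, and $\partial\Omega=\Phi(\T)$ is a real-analytic Jordan curve with nonvanishing derivative. This shows that $\Omega$ is an analytic Jordan domain.

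\emph{Step 3 (derivatives at $1$).} Write $D(w)=1+36i-36iw^3$, and note $D(1)=1$ and $D'(1)=-108i$. Logarithmic differentiation gives
\[
\frac{\Phi'}{\Phi}=\frac1w+\frac{36iw^2}{D}.
\]
At $w=1$ this equals $1+36i$, so $\Phi'(1)=1+36i$. Differentiating once more, and using $\Phi''/\Phi=(\Phi'/\Phi)'+(\Phi'/\Phi)^2$, gives at $w=1$
\[
\Phi''(1)=(1+36i)^2+(-1+72i-3888)=-5184+144i=144i\,(1+36i),
\]
which yields $\Phi''(1)/\Phi'(1)=144i$.

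The only step requiring genuine care is the bookkeeping of the cube-root branch in Step~2. There one must check that the symmetry $\Phi(\omega w)=\omega\Phi(w)$ really holds for the chosen branch, which follows from uniqueness of holomorphic branches on a disk centred at $0$. Everything else is a routine Möbius and calculus check.
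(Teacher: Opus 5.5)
Your proof is correct. The branch choice, the injectivity argument and the boundary data agree with the paper in substance. The paper also cubes, uses injectivity of the M\"obius map to get $w_1^3=w_2^3$, and then notes that the root factor depends only on $w^3$, which is exactly your equivariance $\Phi(\omega w)=\omega\Phi(w)$.

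The real difference is surjectivity. The paper proves the identity $|q(\Phi(w))|^2-1=(|w|^6-1)/|D(w)|^2$, the counterpart of your Step~1 computation. This gives $\Phi(\D)\subset\Omega$ and $\Phi(\T)\subset\{|q|=1\}$. It then argues topologically: $\Phi(\D)$ is open in $\Omega$, and it is relatively closed because preimages of points of $\Omega$ cannot accumulate on $\T$. This needs $\Omega$ connected, which the paper gets from star-shapedness about $0$.

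You instead produce the preimage explicitly as $\omega^k w$, where $w^3=T^{-1}(z^3)$ and $k$ is chosen via the equivariance. Your route is purely algebraic and needs no a priori connectedness of $\Omega$; connectedness comes out as a by-product. It also carries over verbatim to the binomial family of \cref{prop:binomial} through the $\mathbb Z/m$ symmetry. The paper's open--closed argument never inverts anything, and it is the template one would use when no explicit inverse is available.

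For the derivatives, the paper's closed form $\Phi'(w)=(1+36i)D(w)^{-4/3}$ is slightly more economical than logarithmic differentiation. It gives $\Phi'\ne0$ directly and $\Phi''/\Phi'=144iw^2/D(w)$ in one line.

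Two points you leave implicit, though both are immediate:
\begin{enumerate}
\item $\Phi(\D)\subset\Omega$, which follows from $\Phi(w)^3=T(w^3)$ and your identification of $T(\D)$.
\item $\partial\Omega=\Phi(\T)$, which follows from $\Phi(\ol\D)=\ol\Omega$ and injectivity.
\end{enumerate}
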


\begin{proof}
We first check that the cube root can be chosen holomorphically across the closed disk and that $\Phi$ is injective. Let $D(w)=1+36i-36iw^3$. Its zeros have modulus $(\sqrt{1297}/36)^{1/3}>1$. A holomorphic logarithm of $D$ on a disk of radius greater than $1$ therefore defines the chosen cube root. If $\Phi(w_1)=\Phi(w_2)$, cubing gives
\[
 \frac{w_1^3}{1+36i-36iw_1^3}
 =\frac{w_2^3}{1+36i-36iw_2^3}.
\]
Hence $w_1^3=w_2^3$, the root factors agree, and $w_1=w_2$. Differentiation gives
\[
 \Phi'(w)=(1+36i)D(w)^{-4/3},
\]
which is nonzero and gives the stated boundary data.

The identity
\begin{equation}\label{eq:cubic-coordinate-modulus}
 |q(\Phi(w))|^2-1=\frac{|w|^6-1}{|D(w)|^2}
\end{equation}
shows that $\Phi(\D)\subset\Omega$, with boundary on $|q|=1$. The domain $\Omega$ is star-shaped about $0$: the disk $\{u:|u+36i|<\sqrt{1297}\}$ is convex and contains $0$, and $(tz)^3=t^3z^3$ for $0\le t\le1$. Thus $\Omega$ is connected. The image $\Phi(\D)$ is open in $\Omega$ and also relatively closed: a limit point in $\Omega$ cannot have preimages tending to $\T$, by \labelcref{eq:cubic-coordinate-modulus}. Thus, it is all of $\Omega$, and the extension of $\Phi$ gives the analytic Jordan boundary.
\end{proof}

The ratio $(\Phi(w)-a)/\Phi'(w)$ expresses the displacement from $a$ in the scale and direction of the conformal coordinate. Its value and derivative at $w=1$ will determine the motion of the critical points of the holomorphic model.

\begin{corollary}[The interior point and boundary data]\label{cor:cubic}
The point $a$ in \labelcref{eq:cubic-data} belongs to $\Omega$. Define
\begin{equation}\label{eq:J-definition}
 J(w)=\frac{\Phi(w)-a}{\Phi'(w)}.
\end{equation}
Then $J(1)=-1/12$ and $J'(1)=1+12i$.
\end{corollary}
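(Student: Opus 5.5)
The plan is to verify both claims directly from \cref{prop:cubic-coordinates}, with no new analytic input. Membership $a\in\Omega$ is the inequality $|q(a)|<1$, i.e.\ $|a^3+36i|^2<1297$. The values of $J(1)$ and $J'(1)$ follow from the three boundary quantities $\Phi(1)$, $\Phi'(1)$ and $\Phi''(1)/\Phi'(1)$ by the quotient rule.

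For membership, I expand $a=\tfrac{13}{12}+3i$ with common denominators. First
\[
a^2=-\tfrac{1127}{144}+\tfrac{13}{2}i .
\]
Multiplying by $a$ gives
\[
a^3=-\tfrac{48347}{1728}-\tfrac{2367}{144}i .
\]
Hence
\[
a^3+36i=-\tfrac{48347}{1728}+\tfrac{2817}{144}i .
\]
Its squared modulus is about $782.8+382.7\approx1165.5$, which is below $1297$. So $|q(a)|<1$ and $a\in\Omega$. To make this exact rather than decimal, I will clear denominators and compare the integers $48347^2+(12\cdot2817)^2$ and $1297\cdot1728^2$. This is the only computational step, and the main risk is an arithmetic slip; I would double-check it by exact rational arithmetic.

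For $J(1)$, substituting $\Phi(1)=1$ and $\Phi'(1)=1+36i$ gives
\[
J(1)=\frac{1-a}{1+36i}=\frac{-\tfrac1{12}-3i}{1+36i}=-\tfrac1{12}\cdot\frac{1+36i}{1+36i}=-\tfrac1{12}.
\]
For $J'(1)$, differentiating \labelcref{eq:J-definition} gives
\[
J'(w)=1-\frac{(\Phi(w)-a)\Phi''(w)}{\Phi'(w)^2}=1-J(w)\frac{\Phi''(w)}{\Phi'(w)} .
\]
At $w=1$ this is
\[
J'(1)=1-\bigl(-\tfrac1{12}\bigr)(144i)=1+12i .
\]
This matches the statement.
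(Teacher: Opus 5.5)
Your proposal is correct and follows the paper's proof: you obtain the same value $a^3+36i=-\tfrac{48347}{1728}+\tfrac{2817}{144}i$ (the paper writes the imaginary part as $\tfrac{313}{16}$), and you use the same boundary data from \cref{prop:cubic-coordinates} through the observation $a=\Phi(1)+\Phi'(1)/12$. The only difference is cosmetic: the paper certifies $|a^3+36i|^2<1297$ by the crude bounds $|\Ree|<28$ and $|\Imm|<20$, which give $1184<1297$. This avoids your exact integer comparison, although that comparison also holds, since $3480142825<3872821248$.
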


\begin{proof}
We have
\[
 a^3+36i=-\frac{48347}{1728}+\frac{313}{16}i.
\]
The real and imaginary parts have absolute values strictly less than $28$ and $20$, respectively. Therefore
\[
 |a^3+36i|^2<28^2+20^2=1184<1297.
\]
The two values of $J$ and $J'$ follow from \cref{prop:cubic-coordinates} and $a=\Phi(1)+\Phi'(1)/12$.
\end{proof}

Since $\Phi'(1)$ points along the outward normal and
\[
 a-1=\frac{\Phi'(1)}{12},
\]
the outward normal ray returns to the interior point $a$ in $\Omega$. The negative real value $J(1)=-1/12$ makes the derivative zeros move tangentially to leading order.

\subsection{A holomorphic model with a strict boundary margin}\label{subsec:cubic-model}

We use a Blaschke factor to obtain the required boundary margin. Let $W=\Phi^{-1}$, $w_a=W(a)$, and consider
\[
 B_r(w)=\frac{w-r}{1-rw},\qquad 0<r<1.
\]
This function has one zero in $\D$, and the identity
\begin{equation}\label{eq:blaschke-modulus}
 1-|B_r(w)|^2
 =\frac{(1-r^2)(1-|w|^2)}{|1-rw|^2}
\end{equation}
implies that $|B_r|=1$ on $\T$ and $|B_r|<1$ in $\D$. Thus $B_r(W(z))$ has constant modulus on $\partial\Omega$ and a smaller modulus at $a$. Dividing by its value at $a$ gives the strict boundary inequality and the required normalization. We define the function
\begin{equation}\label{eq:blaschke-model}
 F_r(z)=(z-a)\frac{B_r(W(z))}{B_r(w_a)}.
\end{equation}
Whenever $B_r(w_a)\ne0$, we have
\[
 F_r(a)=0,\qquad F_r'(a)=1,\qquad
 \left|\frac{F_r(z)}{z-a}\right|
 =\Lambda_r:=\frac1{|B_r(w_a)|}>1
 \quad(z\in\partial\Omega).
\]
For $r$ sufficiently close to $1$, the denominator is nonzero because $B_r(w_a)\to-1$. The function has two distinct zeros, $a$ and $\Phi(r)$, and approaches $z-a$ away from the boundary point $1$.

Since an interior critical point could survive the later perturbation, we must also show that $F_r'\ne0$ on $\ol\Omega$. Using $J$ from \cref{cor:cubic} and $W'(\Phi(w))=1/\Phi'(w)$, differentiation gives
\begin{equation}
 F_r'(\Phi(w))
 =\frac{B_r(w)+J(w)B_r'(w)}{B_r(w_a)}
 =\frac{(w-r)(1-rw)+(1-r^2)J(w)}
         {B_r(w_a)(1-rw)^2}.\label{eq:model-derivative}
\end{equation}
The critical points in $\ol\Omega$ correspond to zeros in $\ol\D$ of
\[
 N_r(w)=(w-r)(1-rw)+(1-r^2)J(w).
\]
As $r\uparrow1$, this numerator converges to $-(w-1)^2$ and is eventually nonzero outside any fixed neighborhood of $w=1$ in $\ol\D$. The remaining task is to locate the two zeros near $1$.

Set $r=1-\varepsilon$. To leading order, the equation for these two zeros is
\[
 -(w-1)^2+2\varepsilon J(1)=0.
\]
For the cubic, $J(1)=-1/12$, so these displacements are purely imaginary: $w-1\approx\pm i\sqrt{\varepsilon/6}$. These are tangential to the unit circle at $1$. The next term in the expansion decides on which side of the unit circle these zeros lie.

\begin{proposition}[The cubic holomorphic model]\label{prop:cubic-model}
For every $r<1$ sufficiently close to $1$, the function $F_r$ in \labelcref{eq:blaschke-model} is holomorphic on a neighbourhood of $\ol\Omega$ and satisfies
\[
 F_r(a)=0,\qquad F_r'(a)=1,\qquad F_r'\ne0\quad\text{on }\ol\Omega,
 \qquad
 \left|\frac{F_r(z)}{z-a}\right|=\Lambda_r>1
 \quad(z\in\partial\Omega).
\]
\end{proposition}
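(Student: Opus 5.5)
The plan is to dispose of holomorphy and the normalisations quickly and spend the effort on $F_r'\ne0$. First, holomorphy. By \cref{prop:cubic-coordinates}, $\Phi$ extends conformally to a disk $\{|w|<\rho\}$ with $\rho>1$. Hence $W=\Phi^{-1}$ is holomorphic on an open neighbourhood $U$ of $\ol\Omega$, and shrinking $U$ we may assume $W(U)\subset\{|w|<\rho'\}$ for any prescribed $\rho'\in(1,\rho)$. For fixed $r<1$, the only pole of $B_r$ is at $1/r>1$, so choosing $\rho'<1/r$ makes $F_r$ holomorphic on $U$. The identities $F_r(a)=0$, $F_r'(a)=1$ and $|F_r/(z-a)|=\Lambda_r$ on $\partial\Omega$ were already checked above from \labelcref{eq:blaschke-modulus}. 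We need only $B_r(w_a)\ne0$ and $|B_r(w_a)|<1$, which hold for $r$ near $1$ since $w_a\in\D$ and $B_r(w_a)\to-1$.

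For $F_r'\ne0$ on $\ol\Omega$, use \labelcref{eq:model-derivative}: since $\Phi$ is a bijection $\ol\D\to\ol\Omega$ and $1-rw\ne0$ on $\ol\D$, it suffices to show that $N_r$ has no zeros in $\ol\D$. Write $r=1-\varepsilon$ and $h=w-1$. A direct expansion gives
\[
 N_r(w)=-(1-\varepsilon)h^2+\varepsilon^2(1+h)+(2\varepsilon-\varepsilon^2)J(1+h).
\]
On $\ol\D\setminus\{|h|<\eta\}$ this converges uniformly to $-h^2$, which has modulus at least $\eta^2$, so there are no zeros there for small $\varepsilon$.

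Near $w=1$, expand $J(1+h)=J(1)+J'(1)h+O(h^2)$ with $J(1)=-1/12$ and $J'(1)=1+12i$ from \cref{cor:cubic}. The equation $N_r=0$ becomes
\[
 h^2-2\varepsilon J'(1)h+\tfrac{\varepsilon}{6}+O(\varepsilon^2+\varepsilon|h|^2)=0 .
\]
Rescale $h=\sqrt\varepsilon\,u$. The limiting equation is $u^2+1/6=0$, with simple roots $u=\pm i/\sqrt6$, so by Rouché (or the implicit function theorem) there are exactly two zeros with $|h|<\eta$. Moreover
\[
 h_\pm=\varepsilon J'(1)\pm i\sqrt{\varepsilon/6}\,\bigl(1+O(\varepsilon)\bigr)+O(\varepsilon^{3/2}).
\]
To make this precise, I will solve the quadratic part exactly as $h=\varepsilon J'(1)\pm\sqrt{\varepsilon^2J'(1)^2-\varepsilon/6+O(\varepsilon^2)}$. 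The square root is $i\sqrt{\varepsilon/6}\,(1+O(\varepsilon))$, whose real part is $O(\varepsilon^{3/2})$. I then treat the remaining $O(\varepsilon|h|^2)=O(\varepsilon^2)$ terms by Rouché on disks of radius $C\varepsilon^{3/2}$ around these approximate roots. This is legitimate because the derivative of the quadratic there is of size $\sqrt\varepsilon$.

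It then remains to decide the side of the circle on which these roots lie:
\[
 |w|^2=1+2\Ree h+|h|^2=1+2\varepsilon+\tfrac{\varepsilon}{6}+O(\varepsilon^{3/2})>1 .
\]
The first-order term $2\varepsilon$ comes from $\Ree J'(1)=1>0$, and $|h|^2\approx\varepsilon/6$ adds to it with the same sign. So both zeros lie outside $\ol\D$, and $F_r'\ne0$ on $\ol\Omega$.

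The main obstacle is this last step: the tangential leading term $\pm i\sqrt{\varepsilon/6}$ says nothing about the side, so the error bookkeeping must show that all corrections to $\Ree h$ are $o(\varepsilon)$. This is where the sign $J(1)<0$, which makes the leading displacement purely tangential, and $\Ree J'(1)>0$ are genuinely used.
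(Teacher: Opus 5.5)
Your proof is correct and follows the paper's argument. It uses the same reduction to zeros of $N_r$ in $\ol\D$, the same uniform convergence to $-(w-1)^2$ away from $w=1$, the same $\sqrt\varepsilon$ rescaling near $1$, and reaches the same conclusion $|w_\pm|^2=1+\tfrac{13}{6}\varepsilon+O(\varepsilon^{3/2})$. The only difference is technical: the paper applies the holomorphic implicit function theorem to $t^{-2}N_{1-t^2}(1+tu)$ with $t=\sqrt\varepsilon$, whereas you solve the quadratic part explicitly and absorb the $O(\varepsilon|h|^2)$ remainder by Rouch\'e on disks of radius $C\varepsilon^{3/2}$; the count of exactly two zeros in $|h|<\eta$ should come from Rouch\'e against $-h^2$ on $|h|=\eta$, not from the implicit function theorem.
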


\begin{proof}
The Blaschke identity gives the normalizations and boundary modulus. Since the pole $1/r$ lies outside $\ol\D$ and $W$ extends conformally across $\partial\Omega$, $F_r$ is holomorphic on a neighbourhood of $\ol\Omega$. We use the data in \cref{cor:cubic} to exclude zeros of $N_r$ in $\ol\D$.

The leading equation in the discussion preceding the proposition shows that $w-1$ is of size $\sqrt\varepsilon$. Set $\varepsilon=t^2$ and $w=1+tu$. The exact identity
\begin{equation}\label{eq:quadratic-product}
 (w-r)(1-rw)=-x^2+\varepsilon x^2+\varepsilon^2+\varepsilon^2x,
 \qquad x=w-1,
\end{equation}
gives
\[
 t^{-2}N_{1-t^2}(1+tu)
 =-u^2-\frac16+2(1+12i)tu+O(t^2).
\]
The scaled function extends holomorphically to $t=0$. At that point its roots $u_0=\pm i/\sqrt6$ are simple, so the holomorphic implicit function theorem continues each as a holomorphic function of $t$. If $u(t)=u_0+u_1t+O(t^2)$, equating coefficients of $t$ gives
\[
 -2u_0u_1+2(1+12i)u_0=0,
\]
so $u_1=1+12i$. Hence
\begin{equation}\label{eq:cubic-root-expansion}
 w_\pm=1\pm i\sqrt{\varepsilon/6}
 +(1+12i)\varepsilon+O(\varepsilon^{3/2}),
\end{equation}
and
\begin{equation}\label{eq:cubic-root-modulus}
 |w_\pm|^2=1+\frac{13}{6}\varepsilon+O(\varepsilon^{3/2}).
\end{equation}
Both roots are outside $\ol\D$ for small positive $\varepsilon$.

On the boundary of a small fixed disk about $1$, uniform convergence $N_{1-\varepsilon}\to-(w-1)^2$ allows us to apply Rouch\'e's theorem. There are exactly two zeros inside, counted with multiplicity. On the compact remainder of $\ol\D$, the limiting function is bounded away from zero, so it has no further zeros. Since both displayed roots lie outside $\ol\D$, the derivative is nonzero on $\ol\Omega$.
\end{proof}

The coefficient $13/6$ arises from two contributions. For $x=w_\pm-1$,
\[
 |1+x|^2-1=2\Ree x+|x|^2.
\]
The next displacement in \labelcref{eq:cubic-root-expansion} contributes $2\varepsilon$ through $2\Ree x$, while the leading tangential displacement contributes $\varepsilon/6$ through $|x|^2$. Thus tangential motion of size $\sqrt\varepsilon$ affects the radius at the same order as the correction, giving $2+1/6=13/6$. In \cref{sec:geometry}, this calculation yields a condition on the outward-normal return and boundary curvature.

\subsection{Preserving the strict inequalities under approximation}\label{subsec:approximation}

Fix an admissible $r$. Accurate enough approximation of $F_r$ and $F_r'$ preserves the positive margins $\min_{\ol\Omega}|F_r'|$ and $\Lambda_r-1$. The next lemma shows that they also preserve the normalization at $a$. We prove the result for general domains since it will be used later.

\begin{samepage}
\begin{lemma}[Normalised polynomial approximation]\label{lem:approximation}
Let $\Omega$ be an analytic Jordan domain, $a\in\Omega$, and let $F$ be holomorphic on a neighborhood of $\ol\Omega$. Suppose
\[
 F(a)=0,\qquad F'(a)=1,\qquad F'\ne0\ \text{on }\ol\Omega.
\]
For every $\eta>0$, there is a polynomial $f$ satisfying
\[
 f(a)=0,\qquad f'(a)=1,\qquad f'\ne0\ \text{on }\ol\Omega
\]
and
\begin{equation}\label{eq:approximation-quotient}
 \max_{z\in\partial\Omega}
 \left|\frac{f(z)-F(z)}{z-a}\right|<\eta.
\end{equation}
In particular, a strict boundary minimum greater than $1$ persists.
\end{lemma}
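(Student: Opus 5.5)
We approximate the derivative rather than $F$ itself, and then fix the normalization at $a$ by a small correction. Since $F$ is holomorphic on a neighbourhood of $\ol\Omega$ and $\ol\Omega$ is the closure of a Jordan domain, its complement $\C\setminus\ol\Omega$ is connected. Runge's theorem (equivalently Mergelyan's, but Runge suffices because $F'$ is holomorphic near $\ol\Omega$) therefore gives, for every $\theta>0$, a polynomial $g$ with $\max_{\ol\Omega}|g-F'|<\theta$.

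\textbf{Step 1: normalize the derivative.} Set $h(z)=g(z)-g(a)+1$. Then $h(a)=1$ and $\max_{\ol\Omega}|h-F'|<2\theta$. Define the polynomial
\[
 f(z)=\int_{a}^{z}h(s)\dd s,
\]
so that $f(a)=0$ and $f'(a)=h(a)=1$ exactly.

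\textbf{Step 2: nonvanishing derivative.} Let $m=\min_{\ol\Omega}|F'|>0$; this is positive by compactness and the hypothesis. If $2\theta<m$, then $|f'|=|h|>m-2\theta>0$ on $\ol\Omega$.

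\textbf{Step 3: the quotient estimate.} This is the main point, since dividing by $z-a$ could amplify errors. The saving feature is that $a$ is interior: $\rho=\dist(a,\partial\Omega)>0$, so $|z-a|\ge\rho$ on $\partial\Omega$. It then suffices to bound $|f-F|$ on $\partial\Omega$. Both $f$ and $F$ vanish at $a$, so $f(z)-F(z)=\int_a^z(h-F')\dd s$ along any path in $\ol\Omega$ from $a$ to $z$. An analytic Jordan domain closure is path-connected by rectifiable paths of uniformly bounded length. Concretely, using the conformal map $\Phi:\ol\D\to\ol\Omega$, which extends conformally across $\T$ by the analytic boundary hypothesis, take images of radial segments joined to the image of a fixed path to $\Phi^{-1}(a)$. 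Their lengths are bounded by some $L$ depending only on $\Omega$ and $a$, using $\sup_{\ol\D}|\Phi'|<\infty$. Hence
\[
 \max_{\partial\Omega}\left|\frac{f-F}{z-a}\right|\le\frac{2\theta L}{\rho},
\]
and choosing $\theta<\min(m/2,\eta\rho/(2L))$ gives \labelcref{eq:approximation-quotient}.

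\textbf{Final remark.} If $\min_{\partial\Omega}|F/(z-a)|=\Lambda>1$, then by the triangle inequality $\min_{\partial\Omega}|f/(z-a)|>\Lambda-\eta$, which exceeds $1$ once $\eta<\Lambda-1$. This is the persistence claim. The only step requiring care is the uniform path-length bound $L$. If one prefers to avoid it, an alternative is to approximate $F$ and $F'$ simultaneously: approximate $F$ uniformly on a slightly larger analytic Jordan neighbourhood and apply Cauchy estimates to control the derivative on $\ol\Omega$. The correction $f-(f(a)+(f'(a)-1)(z-a))$ is then handled the same way, again using $|z-a|\ge\rho$.
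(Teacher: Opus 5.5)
Your proof is correct, but it takes a different route from the paper's.

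The paper approximates $F$ itself rather than $F'$. It enlarges $\ol\Omega$ to a Jordan compact set $K_+$ on a neighbourhood of which $F$ is still holomorphic, applies Runge there, and uses Cauchy's derivative formula on disks of fixed radius inside $K_+$ to get $h_j'\to F'$ uniformly on $\ol\Omega$. It then normalises by the affine map $f_j=(h_j-h_j(a))/h_j'(a)$. The quotient bound follows at once from $\|f_j-F\|_{\ol\Omega}\to0$ and $\dist(a,\partial\Omega)>0$, with no path geometry.

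You instead apply Runge to $F'$ directly on $\ol\Omega$, so no enlargement is needed. Your normalisation is exact by construction: an additive shift of $g$, then integration from $a$. The derivative bound is immediate. The cost is that closeness of values must be recovered by integrating the derivative error along paths in $\ol\Omega$, which requires the uniform path-length bound. That bound holds as you describe. The simplest version takes the straight segment in $\ol\D$ from $\Phi^{-1}(a)$ to $e^{it}$, giving $L\le 2\sup_{\ol\D}|\Phi'|$, which is finite because $\Phi$ extends conformally across $\T$.

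So both arguments use the analytic boundary, in different places. The paper uses it to build $K_+$ and then gets value and derivative control together. You use it to bound $|\Phi'|$ up to the boundary and in exchange get the derivative normalisation for free.

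Your closing alternative (approximate $F$ on a slightly larger neighbourhood, then use Cauchy estimates) is essentially the paper's argument. The only difference is that you correct $f'(a)$ additively, while the paper divides by $h_j'(a)$.
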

\end{samepage}

\begin{proof}
Choose a slightly larger Jordan compact set $K_+$ whose interior contains $\ol\Omega$ and on a neighborhood of which $F$ is holomorphic. Extend the domain slightly across its analytic boundary to obtain such a set. Its complement is connected. Runge's theorem therefore gives polynomials $h_j$ converging uniformly to $F$ on $K_+$.

For a compact set $E$, define $\|h\|_E=\sup_{z\in E}|h(z)|$. Choose $\rho>0$ so that every closed disk of radius $\rho$ centered in $\ol\Omega$ lies in $K_+$. Cauchy's derivative formula gives
\[
 \|h_j'-F'\|_{\ol\Omega}
 \le\frac1\rho\|h_j-F\|_{K_+}\rightarrow0.
\]
For large $j$, define
\[
 f_j(z)=\frac{h_j(z)-h_j(a)}{h_j'(a)}.
\]
Then $f_j(a)=0$, $f_j'(a)=1$, and $f_j\to F$, $f_j'\to F'$ uniformly on $\ol\Omega$. The bounds $\min_{\ol\Omega}|F'|>0$ and $\dist(a,\partial\Omega)>0$ give the derivative condition and the quotient estimate for sufficiently large $j$.
\end{proof}

\subsection{Bringing every critical point to the boundary}\label{subsec:cubic-power}

We now return to the domain in \cref{eq:cubic-data}.
Let $f$ be a polynomial satisfying the properties in \cref{lem:approximation}, and set $K=\ol\Omega$. Consider
\[
 P_N(z)=f(z)+\frac{q(z)^N-q(a)^N}{N}.
\]
Subtracting $q(a)^N$ ensures that $P_N(a)=0$ and the factor $1/N$ makes the perturbation small on $K$. Differentiation cancels this factor and gives
\begin{equation}\label{eq:power-derivative}
 P_N'(z)=f'(z)+q(z)^{N-1}q'(z).
\end{equation}
The added derivative term decays exponentially on every compact subset of $\Omega$. Its modulus grows when $|q|>1$ and $q'\ne0$. We therefore expect cancellation with $f'$ near $|q|=1$. To exclude distant critical points, we count all derivative zeros inside a fixed outer level.

A \emph{collar} is a thin open band around a Jordan curve. The theorem below confines every critical point to a collar of width $O(N^{-1})$ around the curve.

\begin{theorem}[Power transfer for the cubic]\label{thm:cubic-transfer}
Let $q,a,\Omega$ be as in \labelcref{eq:cubic-data}, and let $K=\ol\Omega$. Suppose the polynomial $f$ satisfies
\begin{equation}\label{eq:cubic-base-normalisation}
 f(a)=0,\qquad f'(a)=1,\qquad f'(z)\ne0\quad(z\in K).
\end{equation}
Set
\[
 L=\min_{z\in\partial K}\left|\frac{f(z)}{z-a}\right|.
\]
For all sufficiently large $N$, the polynomial $P_N$ has degree $3N$, all its critical points are simple, and
\begin{equation}\label{eq:cubic-power-distance}
 \max_{P_N'(c)=0}\dist(c,\partial K)=O(N^{-1}).
\end{equation}
The identity \labelcref{eq:intro-identity} holds at every critical point. In particular,
\begin{equation}\label{eq:cubic-power-lower}
 \Smal(P_N,a)\ge L-O(N^{-1}).
\end{equation}
All constants depend only on the fixed data $q,f,a$. If $L>1$, then $\Smal(P_N,a)>1$ for all sufficiently large $N$.
\end{theorem}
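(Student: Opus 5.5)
The plan is to work directly with \labelcref{eq:power-derivative} and to sandwich all zeros of $P_N'$ between two level curves of $|q|$ at distance $O(N^{-1})$ from $\partial K$. The degree claim is immediate: $f$ is fixed, and $q(z)^N/N$ has leading term $z^{3N}/(N\,1297^{N/2})$, so $\deg P_N=3N$ once $3N>\deg f$. Hence $P_N'$ has exactly $3N-1$ zeros. Two facts about $q$ are used throughout.
\begin{itemize}
\item The only zero of $q'(z)=3z^2/\sqrt{1297}$ is $0$. Since $|q(0)|=36/\sqrt{1297}<1$, the origin is an interior point of $K$.
\item Consequently, on any closed band $\{1-\tau\le|q|\le R\}$ with $\tau>0$ small and $R>1$ fixed, $|q'|$ is bounded below by a positive constant.
\end{itemize}
Since $f'\ne0$ on the compact set $K$, it is also nonzero on a neighbourhood $U$ of $K$. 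I fix $\tau$ and $R$ so that the band $\{1-\tau\le|q|\le R\}$ lies in $U$. On $U$, set $m=\min|f'|>0$, and let $M$ be an upper bound for $|f'|$, $|f''|$, $|q'|$, $|q''|$ and $1/|q'|$ on the band.

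The first step is localisation. Take $C>0$ large but fixed. On $\{|q|\le1-C/N\}$ we have $|q^{N-1}q'|\le M(1-C/N)^{N-1}\le 2Me^{-C}<m\le|f'|$ for large $N$. Here we must first exclude a small disk around $0$ from the band, but there $|q|\le|q(0)|+o(1)<1$ and the bound is exponentially small anyway. So $P_N'$ has no zeros in this region. On $\{1+C/N\le|q|\le R\}$ we have $|q^{N-1}q'|\ge M^{-1}(1+C/N)^{N-1}\ge e^{C}/(2M)>M\ge|f'|$, so again there are no zeros. On the curve $|q|=R$, the same domination with $R^{N-1}$ lets Rouch\'e's theorem compare $P_N'$ with $q^{N-1}q'$. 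The latter has $3(N-1)+2=3N-1$ zeros inside, namely those of $q$ and the double zero of $q'$. So all $3N-1$ critical points lie in $\{|q|<R\}$, and therefore in the collar $\{1-C/N<|q|<1+C/N\}$. Because $\nabla|q|$ is bounded below near $\partial K$ (it equals $|q'|$ on $|q|=1$), the collar lies within distance $O(N^{-1})$ of $\partial K$, which proves \labelcref{eq:cubic-power-distance}.

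Next, at a critical point $c$ we have $q(c)^{N-1}q'(c)=-f'(c)$. Multiplying by $q(c)/q'(c)$ and inserting the result into $P_N(c)-f(c)=(q(c)^N-q(a)^N)/N$ gives exactly \labelcref{eq:intro-identity}. For simplicity, differentiate \labelcref{eq:power-derivative} and substitute the critical-point relation:
\[
P_N''(c)=f''(c)-(N-1)\frac{f'(c)q'(c)}{q(c)}-\frac{f'(c)q''(c)}{q'(c)}.
\]
The middle term has modulus at least $cN$ in the collar, while the other two are $O(1)$. Hence $P_N''(c)\ne0$ for large $N$, and every critical point is simple.

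The last step is the estimate. By \labelcref{eq:intro-identity} and the collar bounds, $|P_N(c)-f(c)|\le M^3\cdot 2/N+|q(a)|^N/N=O(N^{-1})$, since $|q(a)|<1$ by \cref{cor:cubic}. The collar stays a fixed distance from $a$, so $|c-a|$ is bounded below. Since $f(z)/(z-a)$ is Lipschitz near $\partial K$, and $c$ lies within $O(N^{-1})$ of a point of $\partial K$, we get $|f(c)/(c-a)|\ge L-O(N^{-1})$. Moreover $P_N'(a)=1+q(a)^{N-1}q'(a)=1+O(|q(a)|^N)$. Dividing gives \labelcref{eq:cubic-power-lower}, and the conclusion $\Smal(P_N,a)>1$ follows when $L>1$.

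I expect the main obstacle to be the bookkeeping in the localisation step. One must check that the lower bound on $|q'|$ holds on the whole outer band, which uses that $0$ is the only critical point of $q$ and that it lies inside $K$, and one must pick a single $C$ that works both inside and outside. Everything after that is algebra from \labelcref{eq:intro-identity}.
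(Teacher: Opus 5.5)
Your proposal is correct and follows the paper's proof essentially step by step: Rouch\'e on an outer level $|q|=R$ against $q^{N-1}q'$ to place all $3N-1$ critical points inside that level, exclusion of the interior using $\min_K|f'|>0$, the critical-value identity obtained by multiplying the critical equation by $q(c)/q'(c)$, simplicity from the dominant $(N-1)f'(c)q'(c)/q(c)$ term in $P_N''(c)$, and a Lipschitz comparison with a nearby boundary point. The only difference is packaging. You exclude zeros directly from the $N$-dependent bands $\{|q|\le1-C/N\}$ and $\{1+C/N\le|q|\le R\}$ by a domination inequality. The paper instead first confines the critical points to a fixed collar and then reads off $|\log|q(c)||=O(N^{-1})$ from the modulus of the critical-point equation, which is the same estimate.
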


\begin{proof}
We first prove \cref{eq:cubic-power-distance}. For every fixed $\rho<1$, the second term in \labelcref{eq:power-derivative} tends uniformly to zero on the inner level $\{|q|\le\rho\}$. Since $\min_K|f'|>0$, the derivative $P_N'$ is eventually nonzero there. To deal with a fixed outer level, note that $q'(z)=3z^2/\sqrt{1297}$ has its only zero at $0$, strictly inside $K$, and \cref{prop:cubic-coordinates} gives the analytic boundary. The levels $\{|q|=R\}$ for $R>1$ sufficiently close to $1$ are analytic Jordan curves surrounding $K$. On such a level,
\[
 |q^{N-1}q'|=R^{N-1}|q'|>|f'|
\]
for all sufficiently large $N$. Rouch\'e's theorem with $g=q^{N-1}q'$ and $h=f'$ gives the same zero count for $P_N'$ and $q^{N-1}q'$ in the enclosed domain. All zeros of $q$ and $q'$ lie there, so the number is
$
 3(N-1)+2=3N-1.
$
Once $3N>\deg f$, this is the full degree of $P_N'$. Thus, there are no critical points elsewhere in the plane.

The inner and outer levels can be chosen arbitrarily close to $\partial K$. Fix them so that the resulting closed collar $A$ contains neither $a$ nor a zero of $q$, $q'$, or $f'$. Every critical point lies in $A$ for all sufficiently large $N$. At a critical point $c$, equation~\labelcref{eq:power-derivative} implies
\begin{equation}\label{eq:power-log}
 (N-1)\log|q(c)|=\log\left|\frac{f'(c)}{q'(c)}\right|.
\end{equation}
The right side is uniformly bounded on $A$. For a distance estimate, set $u=\log|q|$. At a boundary point $\zeta$, the outward normal derivative is $|q'(\zeta)|>0$. Taylor's formula therefore gives
\[
 u(\zeta+t\cdot\nu(\zeta))=t|q'(\zeta)|+O(t^2),
\]
uniformly on the boundary. These short normal segments cover a sufficiently narrow collar. Since $|q'|$ has a positive boundary minimum, we obtain
\[
 \dist(z,\partial K)\le C|\log|q(z)||\qquad(z\in A).
\]
This proves \labelcref{eq:cubic-power-distance}.

Multiplying \labelcref{eq:power-derivative} at $c$ by $q(c)/q'(c)$ yields
\[
 q(c)^N=-\frac{q(c)f'(c)}{q'(c)},
\]
so the high power is uniformly bounded at all critical points. Substitution into $P_N$ proves \labelcref{eq:intro-identity}. Also
\begin{equation}\label{eq:power-at-a}
 P_N(a)=0,\qquad
 P_N'(a)=1+q(a)^{N-1}q'(a)=1+O(\sigma^N)
\end{equation}
for some $\sigma\in(0,1)$. The critical-value identity gives $P_N(c)=f(c)+O(N^{-1})$, uniformly over all critical points. Since the collar stays away from $a$, division by $c-a$ preserves this estimate. Consequently,
\[
 \left|\frac{P_N(c)}{(c-a)P_N'(a)}\right|
 =\left|\frac{f(c)}{c-a}\right|+O(N^{-1})
 \ge L-O(N^{-1}).
\]
For the inequality, compare $c$ with a nearest boundary point using \labelcref{eq:cubic-power-distance}: the function $z\mapsto|f(z)/(z-a)|$ is Lipschitz on a neighborhood of the collar. Taking the minimum over the critical points proves \labelcref{eq:cubic-power-lower}.

At a critical point, substitution of the derivative equation into the second derivative gives
\begin{equation}\label{eq:power-second}
 P_N''(c)=f''(c)
 -(N-1)\frac{f'(c)q'(c)}{q(c)}
 -\frac{f'(c)q''(c)}{q'(c)}.
\end{equation}
The middle term has modulus bounded below by a positive constant times $N$, and the other two terms are uniformly bounded. Thus $P_N''(c)\ne0$ for large $N$.
\end{proof}

We need $f'\ne0$ on $K$. An interior zero of $f'$ would survive, since the derivative perturbation is exponentially small on a small surrounding circle.

So far, the modulus of the critical-point equation has controlled distance from the boundary. In \cref{sec:sampling}, we use its argument to locate the critical points along the boundary. There we show that every boundary point is approached and determine the exact limiting boundary minimum.

\subsection{Completing the construction}\label{subsec:cubic-completion}

We now choose the approximation and the high power to retain a fixed part of the boundary margin.

\begin{corollary}[A family from the cubic seed]\label{cor:cubic-polynomials}
Let $q$ and $a$ be as in \labelcref{eq:cubic-data}.
There are a fixed polynomial $f$ with $f(a)=0$, $f'(a)=1$, and constants $\delta>0$ and $N_0$ such that, for every $N\ge N_0$, the polynomial
\begin{equation}\label{eq:cubic-witness}
 p_N(z)=\frac{\displaystyle f(a+z)+
                  \frac{q(a+z)^N-q(a)^N}{N}}
                 {\displaystyle 1+q(a)^{N-1}q'(a)}
\end{equation}
has degree $3N$ and satisfies
\[
 p_N(0)=0,\qquad p_N'(0)=1,\qquad \Smal(p_N,0)\ge1+\delta.
\]
Every critical point is simple.
\end{corollary}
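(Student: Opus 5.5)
The plan is to assemble the three earlier ingredients in order: the holomorphic model of \cref{prop:cubic-model}, the normalised approximation of \cref{lem:approximation}, and the power transfer of \cref{thm:cubic-transfer}. The final step is a translation by $a$ together with a rescaling.

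\emph{Step 1: fix the model and the polynomial $f$.} Choose $r<1$ close enough to $1$ that \cref{prop:cubic-model} applies. This gives $F_r$, holomorphic near $K=\ol\Omega$, with $F_r(a)=0$, $F_r'(a)=1$, $F_r'\ne0$ on $K$, and boundary quotient modulus $\Lambda_r>1$. Apply \cref{lem:approximation} with $\eta=(\Lambda_r-1)/2$. The result is a polynomial $f$ with the same normalisations, with $f'\ne0$ on $K$, and with
\[
 L=\min_{\partial K}\left|\frac{f(z)}{z-a}\right|\ge\Lambda_r-\eta=1+2\delta,
 \qquad \delta=\frac{\Lambda_r-1}{4}.
\]

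\emph{Step 2: apply the transfer.} With this $f$, \cref{thm:cubic-transfer} gives, for large $N$, that $P_N=f+(q^N-q(a)^N)/N$ has degree $3N$ and simple critical points. It also gives $\Smal(P_N,a)\ge L-C/N$ with $C$ depending only on $q,f,a$. Choose $N_0$ so that $C/N_0\le\delta$ and so that the theorem's conclusions hold for $N\ge N_0$. Then $\Smal(P_N,a)\ge1+\delta$.

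\emph{Step 3: normalise at the origin.} We have $P_N'(a)=1+q(a)^{N-1}q'(a)$, which is nonzero for large $N$ because $|q(a)|<1$. Hence $p_N(z)=P_N(a+z)/P_N'(a)$, which is exactly \labelcref{eq:cubic-witness}. It satisfies $p_N(0)=0$ and $p_N'(0)=1$, and it has degree $3N$; the leading coefficient of $q^N/N$ dominates once $3N>\deg f$. The critical points of $p_N$ are $c-a$, where $c$ runs over the critical points of $P_N$, and simplicity is preserved. Substituting gives
\[
 \frac{p_N(c-a)-p_N(0)}{(c-a)p_N'(0)}=\frac{P_N(c)}{(c-a)P_N'(a)}.
\]
Therefore $\Smal(p_N,0)=\Smal(P_N,a)\ge1+\delta$.

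The only delicate point is ordering the constants. The margin $\delta$ must be fixed from $\Lambda_r$ before choosing $f$, and the $O(N^{-1})$ constant in \cref{thm:cubic-transfer} depends on $f$. So $N_0$ is chosen last, and then it is uniform in $N$. Everything else is bookkeeping.
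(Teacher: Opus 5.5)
Your proposal is correct and follows the paper's proof essentially step for step. You fix $r$ via \cref{prop:cubic-model}, keep a fixed fraction of the margin $\Lambda_r-1$ through \cref{lem:approximation}, apply \cref{thm:cubic-transfer} with $N_0$ chosen last, and normalise by $p_N(z)=P_N(a+z)/P_N'(a)$; the only difference is bookkeeping (the paper takes $\eta=(\Lambda_r-1)/4$ and $\delta=(\Lambda_r-1)/2$, you take $\eta=(\Lambda_r-1)/2$ and $\delta=(\Lambda_r-1)/4$), which works equally well.
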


\begin{proof}
Fix $r$ as in \cref{prop:cubic-model} and set $g=\Lambda_r-1>0$. The approximation in \cref{lem:approximation}, with $\eta=g/4$, gives a fixed polynomial $f$ satisfying \labelcref{eq:cubic-base-normalisation} and $L>1+3g/4$.

Here $f$ is fixed, so \cref{thm:cubic-transfer} gives $\Smal(P_N,a)\ge L-C/N$, where $C$ does not depend on $N$. Choose $N_0$ large enough that, for every $N\ge N_0$, the bound $C/N\le g/4$ holds, $P_N$ has degree $3N$ and simple critical points, and $P_N'(a)\ne0$. Then $\Smal(P_N,a)>1+g/2$.

The normalisation $p_N(z)=P_N(a+z)/P_N'(a)$ gives $p_N(0)=0$ and $p_N'(0)=1$, and preserves the mean-value functional. Set $\delta=g/2$ to finish the proof.
\end{proof}

\section{The geometry behind the construction}\label{sec:geometry}

Two boundary data govern the cubic root calculation in the previous section. The value of $J$ at $1$ sets the leading direction and its derivative supplies the next term. They give a strict escape criterion for the pair of zeros. 

We now generalize this criterion. Let $\Omega$ be an analytic Jordan domain and $a\in\Omega$.
On $\partial\Omega$, the criterion becomes an outward normal return together with a curvature inequality.
The Riemann mapping theorem supplies a conformal map $\Phi:\D\to\Omega$, and the analytic boundary allows conformal extensions of $\Phi$ and its inverse across their respective boundaries. Set $W=\Phi^{-1}$ and $w_a=W(a)$, and define $F_r$ and $J$ by \labelcref{eq:blaschke-model,eq:J-definition}. The Blaschke identity gives the normalizations and strict boundary inequality. To exclude derivative zeros, consider the numerator
\[
 N_r(w)=(w-r)(1-rw)+(1-r^2)J(w).
\]
As before, as $r\uparrow1$, this converges uniformly to $-(w-1)^2$, so only the two zeros near $1$ need further study.

\subsection{Tangential splitting and escape from the disk}\label{subsec:general-splitting}

Set $r=1-\varepsilon$, with $\varepsilon>0$ small. The leading root equation is $(w-1)^2=2\varepsilon J(1)$. We have $J(1)\ne0$, since $a\in\Omega$ and $\Phi(1)\in\partial\Omega$. Unless $J(1)$ is negative real, the two square roots have real parts of opposite signs, and one zero enters $\D$. With $J(1)=-\tau<0$, the leading motion is tangential, as in the cubic. The next term must then be compared with the curvature of the circle.

\begin{lemma}[Tangential splitting]\label{lem:splitting}
Let $J$ be holomorphic on a neighbourhood of $\ol\D$, and suppose that $J(1)=-\tau<0$.
Set $b=J'(1)$ and
\[
 N_r(w)=(w-r)(1-rw)+(1-r^2)J(w).
\]
As $\varepsilon\downarrow0$, the two zeros of $N_{1-\varepsilon}$ near $1$, counted with multiplicity, are
\begin{equation}\label{eq:root-expansion}
 w_\pm=1\pm i\sqrt{2\tau\varepsilon}
             +b\varepsilon+O(\varepsilon^{3/2}),
\end{equation}
and they are simple for small $\varepsilon>0$. In particular,
\begin{equation}\label{eq:root-modulus}
 |w_\pm|^2
 =1+2(\tau+\Ree b)\varepsilon+O(\varepsilon^{3/2}).
\end{equation}
If $\tau+\Ree b>0$, then $N_{1-\varepsilon}$ has no zeros in $\ol\D$ for all sufficiently small $\varepsilon>0$. If $\tau+\Ree b<0$, both displayed zeros lie in $\D$ for all sufficiently small $\varepsilon>0$.
\end{lemma}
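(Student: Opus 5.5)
We follow the cubic computation in the proof of \cref{prop:cubic-model}, now with general data $\tau$ and $b$. Put $\varepsilon=t^2$, $x=w-1$ and $w=1+tu$. Using the exact identity \labelcref{eq:quadratic-product} together with $1-r^2=2\varepsilon-\varepsilon^2$ and the Taylor expansion $J(1+tu)=-\tau+btu+O(t^2)$, we obtain
\[
 G(t,u):=t^{-2}N_{1-t^2}(1+tu)=-u^2-2\tau+2btu+O(t^2),
\]
locally uniformly in $u$. Every correction term is a holomorphic function of $(t,u)$ near $t=0$, because $J$ is holomorphic near $\ol\D$ and each term carries at least a factor $t^2$ before division. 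Hence $G$ extends holomorphically to $t=0$. At $t=0$ the roots are $u_0=\pm i\sqrt{2\tau}$. These are simple because $\partial_uG=-2u_0\ne0$, which uses $\tau>0$.

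By the holomorphic implicit function theorem, each root continues as a holomorphic function $u(t)=u_0+u_1t+O(t^2)$. Matching the coefficient of $t$ gives $-2u_0u_1+2bu_0=0$, so $u_1=b$. This yields \labelcref{eq:root-expansion}. Expanding $|1+x|^2=1+2\Ree x+|x|^2$ with $x=\pm i\sqrt{2\tau\varepsilon}+b\varepsilon+O(\varepsilon^{3/2})$ gives $2\Ree x=2\Ree b\,\varepsilon+O(\varepsilon^{3/2})$ and $|x|^2=2\tau\varepsilon+O(\varepsilon^{3/2})$. This proves \labelcref{eq:root-modulus}.

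Next we check that these are all the zeros near $1$ and that they are simple. Fix a small disk $D_0$ about $1$. Since $N_{1-\varepsilon}\to-(w-1)^2$ uniformly on $\ol D_0$, Rouch\'e's theorem shows that $N_{1-\varepsilon}$ has exactly two zeros in $D_0$ for small $\varepsilon$. The two implicit-function branches are distinct, since they differ by approximately $2i\sqrt{2\tau\varepsilon}\ne0$, and both lie in $D_0$. So they account for both zeros, and each zero is simple.

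It remains to draw the conclusions about $\ol\D$. On the compact set $\ol\D\setminus D_0$, the limit $-(w-1)^2$ is bounded away from zero, so uniform convergence excludes zeros there once $\varepsilon$ is small. If $\tau+\Ree b>0$, then \labelcref{eq:root-modulus} gives $|w_\pm|>1$ for small $\varepsilon$, so $N_{1-\varepsilon}$ has no zeros in $\ol\D$. If $\tau+\Ree b<0$, the same formula gives $|w_\pm|<1$.

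The only delicate point is making the $O(t^2)$ remainder genuinely holomorphic in $(t,u)$, uniformly on a neighbourhood of the two values $u_0$, so that the implicit function theorem applies. This follows directly from the explicit polynomial identity \labelcref{eq:quadratic-product} and the Taylor expansion of $J$ at $1$ with holomorphic remainder.
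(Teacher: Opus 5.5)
Your proposal is correct and matches the paper's proof step for step. It uses the same rescaling $\varepsilon=t^2$, $w=1+tu$ via the exact product identity, applies the implicit function theorem at the simple roots $u_0=\pm i\sqrt{2\tau}$ to get $u_1=b$, counts with Rouch\'e on a small circle about $1$, and excludes the rest of $\ol\D$ by uniform convergence to $-(w-1)^2$. Your remark that the two branches are distinct, so that Rouch\'e's count of two forces both zeros to be simple, makes explicit a step the paper leaves implicit.
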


\begin{proof}
We rescale as in the proof of \cref{prop:cubic-model} with $\varepsilon=t^2$ and $w=1+tu$. By \labelcref{eq:quadratic-product},
\[
 t^{-2}N_{1-t^2}(1+tu)
 =-u^2-2\tau+2btu+O(t^2).
\]
At $t=0$ the two roots $u_0=\pm i\sqrt{2\tau}$ are simple. The implicit-function theorem gives $u(t)=u_0+bt+O(t^2)$, proving \labelcref{eq:root-expansion}. Taking squared moduli then gives \labelcref{eq:root-modulus}. Rouch\'e's theorem on a fixed small circle about $1$ counts these two zeros. Uniform convergence to $-(w-1)^2$ excludes zeros on the compact remainder of $\ol\D$. Thus, in either strict case, the roots lie on the side of $\T$ indicated by the sign of $\tau+\Ree b$.
\end{proof}

For $x=w-1$, the unit circle has equation
\[
 2\Ree x+|x|^2=0.
\]
The tangential term in \labelcref{eq:root-expansion} contributes $2\tau\varepsilon$ through $|x|^2$, while the next displacement contributes $2\Ree b\,\varepsilon$. Hence
\begin{equation}\label{eq:root-distance}
 |w_\pm|-1=(\tau+\Ree b)\varepsilon+O(\varepsilon^{3/2}).
\end{equation}
Even an inward correction, $\Ree b<0$, can leave both roots outside, because the circle curves away from its tangent at the same order; see \cref{fig:splitting}. For the cubic construction in \cref{sec:cubic}, $\tau=1/12$ and $b=1+12i$, so $2(\tau+\Ree b)=13/6$, recovering \labelcref{eq:cubic-root-modulus}.

\begin{figure}[t]
\centering
\begin{minipage}[t]{0.47\textwidth}
\vspace{0pt}
\centering
\begin{tikzpicture}[x=1.85cm,y=1.85cm,>=Stealth,font=\small]
\path[use as bounding box] (-1.25,-1.3) rectangle (2.35,1.3);
\fill[gray!10] (0,0) circle (1);
\draw[blue!55!black,line width=0.7pt] (0,0) circle (1);
\draw[gray,densely dashed] (1,-0.9) -- (1,0.92);
\node[anchor=south] at (1,0.93) {tangent};
\fill (0,0) circle (1pt) node[below left] {$0$};
\node at (-0.38,0.4) {$\mathbb D$};
\draw[->,red!65!black,line width=0.8pt]
  (1,0.025) -- (1,0.408);
\draw[->,red!65!black,line width=0.8pt]
  (1,-0.025) -- (1,-0.408);
\fill (1,0) circle (1.5pt) node[right=3pt] {$1$};
\fill[red!65!black] (0.957225,0.4221) circle (1.6pt);
\fill[red!65!black] (0.957225,-0.4221) circle (1.6pt);
\node[anchor=west] at (1.09,0.44) {$w_+$};
\node[anchor=west] at (1.09,-0.44) {$w_-$};
\node[anchor=west,align=left] at (1.14,0.2)
 {$O(\sqrt\varepsilon)$};
\node at (0.5,-1.23) {(a) The leading displacement is tangential.};
\end{tikzpicture}
\end{minipage}\hfill
\begin{minipage}[t]{0.51\textwidth}
\vspace{0pt}
\centering
\begin{tikzpicture}[x=12cm,y=12cm,>=Stealth,font=\small]
\path[use as bounding box] (0.79,0.17) rectangle (1.40,0.571);
\begin{scope}
\clip (0.805,0.262) rectangle (1.147,0.548);
\fill[gray!10] (0,0) circle (1);
\draw[blue!55!black,line width=0.7pt] (0,0) circle (1);
\draw[gray,densely dashed] (1,0.25) -- (1,0.56);
\end{scope}
\draw[gray,densely dotted] (0.9006,0.424264) -- (1.13,0.424264);
\draw[red!65!black,->,line width=0.9pt]
 (0.998,0.424264) -- (0.962,0.4223);
\draw[fill=white,line width=0.7pt] (1,0.424264) circle (1.5pt);
\fill[red!65!black] (0.957225,0.4221) circle (1.8pt);
\node[anchor=south east] at (0.95,0.428) {$w_+$};
\node[anchor=west,align=left] at (1.023,0.456)
 {$1+i\sqrt{2\tau\varepsilon}$};
\draw[gray,line width=0.4pt] (1.015,0.45) -- (1.002,0.429);
\draw[gray,line width=0.4pt] (1.02,0.375) -- (0.978,0.414);
\node[anchor=west,align=left] at (1.026,0.362)
 {inward correction\\of order $\varepsilon$};
\node[anchor=west] at (1.018,0.528) {$\operatorname{Re}w=1$};
\node[blue!55!black,anchor=east] at (0.838,0.53) {$\mathbb T$};
\node[anchor=west] at (0.806,0.29) {$\mathbb D$};
\node at (1.075,0.189) {(b) Curvature leaves room for the correction.};
\end{tikzpicture}
\end{minipage}
\caption{Tangential escape from the unit disk. The leading
displacements have size $\sqrt\varepsilon$. An inward correction of size $\varepsilon$ can still leave
the root outside because the circle curves away from its tangent at the same
order. The condition $\tau+\operatorname{Re}b>0$ compares these two
effects.}
\label{fig:splitting}
\end{figure}

\begin{samepage}
\begin{proposition}[A holomorphic model with nonvanishing derivative]\label{prop:model}
Let $\Phi:\D\to\Omega$ be a conformal map onto $\Omega$ that extends conformally to a neighborhood of $\ol\D$, let $a\in\Omega$, and define $J$ by \labelcref{eq:J-definition}. Suppose
\begin{equation}\label{eq:J-conditions}
 J(1)=-\tau<0,\qquad \tau+\Ree J'(1)>0.
\end{equation}
Then, for every $r<1$ sufficiently close to $1$, the function $F_r$ in \labelcref{eq:blaschke-model} is holomorphic on a neighbourhood of $\ol\Omega$ and satisfies
\begin{equation}\label{eq:model-properties}
 \begin{gathered}
 F_r(a)=0,\qquad F_r'(a)=1,\qquad F_r'\ne0\quad\hbox{on }\ol\Omega,\\
 \left|\frac{F_r(z)}{z-a}\right|=\Lambda_r>1
 \quad(z\in\partial\Omega),\qquad
 \Lambda_r=|B_r(w_a)|^{-1}.
 \end{gathered}
\end{equation}
\end{proposition}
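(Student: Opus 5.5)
The plan is to repeat the proof of \cref{prop:cubic-model} for a general domain, with \cref{lem:splitting} standing in for the explicit cubic root computation.

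\emph{Holomorphy and normalization.} Since $\Phi$ extends conformally to a neighbourhood of $\ol\D$, $W=\Phi^{-1}$ extends conformally to a neighbourhood of $\ol\Omega$ and maps it onto a neighbourhood of $\ol\D$. The pole of $B_r$ sits at $1/r$, which lies outside $\ol\D$. After shrinking the neighbourhood of $\ol\Omega$ (depending on $r$) so that $W$ maps it into $\{|w|<1/r\}$, the function $B_r\circ W$ is holomorphic there.

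Next, $B_r(w_a)\to-1$ as $r\uparrow1$, so $B_r(w_a)\neq0$ for $r$ close to $1$. Then $F_r$ is well defined, $F_r(a)=0$, and $F_r'(a)=B_r(w_a)/B_r(w_a)=1$. On $\partial\Omega$ we have $|W|=1$, so the identity \labelcref{eq:blaschke-modulus} gives $|F_r(z)/(z-a)|=|B_r(w_a)|^{-1}=\Lambda_r$. Because $w_a\in\D$, the same identity gives $|B_r(w_a)|<1$, hence $\Lambda_r>1$.

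\emph{Nonvanishing derivative.} Write $z=\Phi(w)$ and use $W'(\Phi(w))=1/\Phi'(w)$. The formula \labelcref{eq:model-derivative} holds verbatim:
\[
F_r'(\Phi(w))=\frac{N_r(w)}{B_r(w_a)(1-rw)^2},
\qquad
N_r(w)=(w-r)(1-rw)+(1-r^2)J(w).
\]
Since $\Phi$ is a bijection from $\ol\D$ onto $\ol\Omega$ and $1-rw\ne0$ on $\ol\D$, the condition $F_r'\ne0$ on $\ol\Omega$ is equivalent to $N_r$ having no zeros in $\ol\D$.

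$J$ is holomorphic on a neighbourhood of $\ol\D$ because $\Phi'\neq0$ there. The hypotheses \labelcref{eq:J-conditions} are exactly those of \cref{lem:splitting} with $\tau>0$ and $\tau+\Ree b>0$, where $b=J'(1)$. That lemma concludes that $N_{1-\varepsilon}$ has no zeros in $\ol\D$ for all sufficiently small $\varepsilon>0$, which settles the claim.

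\emph{Where the difficulty lies.} The main point to check is that the lemma's conclusion really covers all of $\ol\D$, not only a neighbourhood of $1$. This is the uniform-convergence and Rouch\'e argument. As $r\uparrow1$, $N_r\to-(w-1)^2$ uniformly on $\ol\D$, because $(1-r^2)J$ is uniformly $O(1-r)$ there.

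\begin{itemize}
\item On a small fixed circle about $1$, Rouch\'e's theorem gives exactly two zeros inside it.
\item \cref{lem:splitting} locates these two zeros as $w_\pm$. By \labelcref{eq:root-modulus}, both lie outside $\ol\D$.
\item On the compact remainder of $\ol\D$, the limit $-(w-1)^2$ is bounded away from zero, so there are no zeros there for $r$ close to $1$.
\end{itemize}

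Combining the two ranges of $r$ (the one for $B_r(w_a)\ne0$ and the one from the lemma) gives every $r<1$ sufficiently close to $1$.
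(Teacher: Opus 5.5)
Your proposal is correct and follows the paper's proof essentially step for step. You get holomorphy from the conformal extension of $W$ with the pole $1/r$ outside $\ol\D$, the normalizations and boundary modulus $\Lambda_r>1$ from the Blaschke identity, and $F_r'\ne0$ on $\ol\Omega$ by reducing via \labelcref{eq:model-derivative} to the zeros of $N_r$ in $\ol\D$ and invoking \cref{lem:splitting}, whose proof already contains the Rouch\'e and uniform-convergence argument you unpack.
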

\end{samepage}

\begin{proof}
For each fixed $r$ sufficiently close to $1$, $B_r(w_a)\ne0$ and the pole $1/r$ lies outside $\ol\D$. The conformal extension of $W$ across $\partial\Omega$ makes $F_r$ holomorphic on a neighborhood of $\ol\Omega$. The Blaschke identity gives the normalizations and boundary modulus. By \cref{lem:splitting}, the numerator in \labelcref{eq:model-derivative} has no zeros in $\ol\D$, so $F_r'\ne0$ on $\ol\Omega$.
\end{proof}

\subsection{The normal-return and curvature conditions}\label{subsec:normal-geometry}

We now translate \labelcref{eq:J-conditions} back to the boundary of $\Omega$. The calculation below gives the criterion in \cref{thm:normal-main}.

\begin{proposition}[Intrinsic form of the boundary condition]\label{prop:curvature}
Let $\zeta=\Phi(1)$. The conditions in \labelcref{eq:J-conditions} are equivalent to
\[
 a=\zeta+d\cdot\nu(\zeta),\qquad d>0,\qquad
 1+d\cdot\kappa(\zeta)>0,
\]
with $\tau=d/|\Phi'(1)|$.
\end{proposition}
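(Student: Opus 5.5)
The plan is to compute both conditions in \labelcref{eq:J-conditions} directly from the boundary parametrization $\gamma(\theta)=\Phi(e^{i\theta})$. I will use two standard facts about the conformal coordinate at $w=1$.

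\textbf{Step 1: the outward normal.} We have $\gamma'(0)=i\Phi'(1)$, and the counterclockwise orientation of $\T$ is carried to that of $\partial\Omega$. Since $\Phi$ maps the outward radial direction at $1$ to the outward direction at $\zeta$, this gives
\[
 \nu(\zeta)=\frac{\Phi'(1)}{|\Phi'(1)|}.
\]

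\textbf{Step 2: the value condition.} From \labelcref{eq:J-definition}, $J(1)=(\zeta-a)/\Phi'(1)$. Hence $J(1)=-\tau$ with $\tau>0$ holds exactly when
\[
 a-\zeta=\tau\Phi'(1)=\tau|\Phi'(1)|\,\nu(\zeta).
\]
This is the statement $a=\zeta+d\cdot\nu(\zeta)$ with $d=\tau|\Phi'(1)|>0$. Conversely, any such $a$ and $d$ give $J(1)=-d/|\Phi'(1)|<0$. This settles the first equivalence and the relation between $\tau$ and $d$. The hypothesis $a\in\Omega$ is already built into the setting.

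\textbf{Step 3: the derivative condition.} Differentiating $J=(\Phi-a)/\Phi'$ gives $J'=1-J\,\Phi''/\Phi'$. At $w=1$ this becomes
\[
 J'(1)=1+\tau\,\frac{\Phi''(1)}{\Phi'(1)},
 \qquad\text{so}\qquad
 \tau+\Ree J'(1)=1+\tau\Bigl(1+\Ree\frac{\Phi''(1)}{\Phi'(1)}\Bigr).
\]

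\textbf{Step 4: the curvature of a conformal image of $\T$.} I will verify the classical formula
\[
 \kappa(\Phi(w))=\frac{1+\Ree\bigl(w\Phi''(w)/\Phi'(w)\bigr)}{|\Phi'(w)|},\qquad |w|=1.
\]
Write $\gamma'=iw\Phi'$ and $\gamma''=-w\Phi'-w^2\Phi''$. Then
\[
 \kappa=\frac{\Imm\bigl(\ol{\gamma'}\gamma''\bigr)}{|\gamma'|^3}.
\]
Since $|w|=1$,
\[
 \ol{\gamma'}\gamma''=-i\,\ol w\,\ol{\Phi'}\bigl(-w\Phi'-w^2\Phi''\bigr)=i|\Phi'|^2\Bigl(1+\frac{w\Phi''}{\Phi'}\Bigr).
\]
Taking the imaginary part and dividing by $|\Phi'|^3$ gives the formula. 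This computation is the only step where care is needed, namely with orientation and sign conventions. As a sanity check, the formula gives $\kappa=1$ for $\Phi=\mathrm{id}$.

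\textbf{Conclusion.} At $w=1$, Step 4 gives $|\Phi'(1)|\kappa(\zeta)=1+\Ree\bigl(\Phi''(1)/\Phi'(1)\bigr)$. Substituting into Step 3,
\[
 \tau+\Ree J'(1)=1+\tau|\Phi'(1)|\kappa(\zeta)=1+d\cdot\kappa(\zeta).
\]
So the second condition in \labelcref{eq:J-conditions} is equivalent to $1+d\cdot\kappa(\zeta)>0$.

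As a consistency check, take the cubic of \cref{sec:cubic}. There $\Phi''(1)/\Phi'(1)=144i$ has zero real part, so $1+d\kappa=1+\tau=13/12$. This matches $\tau+\Ree b=1/12+1=13/12$ and hence \labelcref{eq:cubic-root-modulus}.
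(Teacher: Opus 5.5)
Your proposal is correct and follows essentially the same route as the paper. You identify $\nu(\zeta)=\Phi'(1)/|\Phi'(1)|$, read off $a=\zeta+\tau\Phi'(1)$ from $J(1)=-\tau$, compute $J'(1)=1+\tau\Phi''(1)/\Phi'(1)$, and express the curvature as $\Ree\bigl(1+\Phi''(1)/\Phi'(1)\bigr)/|\Phi'(1)|$; the only cosmetic differences are that you use $\Imm(\ol{\gamma'}\gamma'')/|\gamma'|^3$ in place of $\Imm(\gamma''/\gamma')/|\gamma'|$ and carry out the curvature computation at a general point of $\T$ before specializing to $w=1$.
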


\begin{proof}
The tangent to $t\mapsto\Phi(e^{it})$ at $t=0$ is $i\Phi'(1)$, so the outward unit normal is
\[
 \nu(\zeta)=\frac{\Phi'(1)}{|\Phi'(1)|}.
\]
Thus $J(1)=-\tau$ means $a=\Phi(1)+\tau\Phi'(1)$. Differentiating $J$ gives
\[
 J'(1)=1+\tau\frac{\Phi''(1)}{\Phi'(1)}.
\]
For a regular counterclockwise parametrisation $\gamma(t)$, signed curvature is
\[
 \kappa=\frac1{|\gamma'|}\frac{\mathrm d}{\mathrm dt}\arg\gamma'
 =\frac{\Imm(\gamma''/\gamma')}{|\gamma'|}.
\]
For $\gamma(t)=\Phi(e^{it})$, we have
\[
 \gamma'(0)=i\Phi'(1),\qquad
 \gamma''(0)=-\Phi'(1)-\Phi''(1).
\]
These formulas give
\begin{equation}\label{eq:curvature-formula}
 \kappa(\zeta)
 =\frac{\Ree\bigl(1+\Phi''(1)/\Phi'(1)\bigr)}{|\Phi'(1)|}.
\end{equation}
Consequently
\begin{equation}\label{eq:curvature-identity}
 \tau+\Ree J'(1)=1+d\cdot\kappa(\zeta).\qedhere
\end{equation}
\end{proof}

A strictly positive boundary margin must remain for polynomial approximation. Evaluating \labelcref{eq:blaschke-modulus} at $w_a$, with $r=1-\varepsilon$, gives
\begin{equation}\label{eq:poisson-gain}
 \Lambda_{1-\varepsilon}
 =1+\varepsilon\frac{1-|w_a|^2}{|1-w_a|^2}
   +O(\varepsilon^2).
\end{equation}
Since $|w_a|<1$, the coefficient is positive. It is the Poisson kernel at $w_a$ and the boundary point $1$.

\subsection{A family extending the cubic example}\label{subsec:binomial-family}

The cubic belongs to a family obtained by taking inverse images under $z\mapsto z^m$, $m\ge2$, of the disks
\[
 \{u:|u+iA|<\sqrt{1+A^2}\},\qquad A>0.
\]
Each disk contains $0$ and has $1$ on its boundary. The inverse image has $m$ lobes connected near the origin; the connections narrow as $A$ increases. We can look for normal returns in this explicit family without moving the boundary point $1$.

The M\"obius map
\[
 v\longmapsto\frac{v}{1+iA-iAv}
\]
takes $\D$ onto the displayed disk and fixes $1$. Composing with $v=w^m$ and taking a compatible $m$th root gives the following conformal coordinate.

\begin{proposition}[Conformal coordinates for binomial lemniscates]\label{prop:binomial}
Let $m\ge2$ be an integer, $A>0$, and $R=\sqrt{1+A^2}$. Define
\begin{equation}\label{eq:binomial-q}
 q_{m,A}(z)=\frac{z^m+iA}{R},\qquad
 \Omega_{m,A}=\{z:|q_{m,A}(z)|<1\}.
\end{equation}
Then $\Omega_{m,A}$ is an analytic Jordan domain, and
\begin{equation}\label{eq:binomial-phi}
 \Phi(w)=w(1+iA-iAw^m)^{-1/m}
\end{equation}
is a conformal map from $\D$ onto $\Omega_{m,A}$. Choose the analytic root whose value is $1$ at $w=1$. The map extends to a neighborhood of $\ol\D$, and
\begin{equation}\label{eq:binomial-boundary-data}
 \Phi(1)=1,\qquad \Phi'(1)=1+iA,\qquad
 \frac{\Phi''(1)}{\Phi'(1)}=(m+1)iA.
\end{equation}
Every critical point of $q_{m,A}$ lies in $\Omega_{m,A}$.
\end{proposition}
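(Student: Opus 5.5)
The plan is to imitate the proof of \cref{prop:cubic-coordinates}, replacing $36$ by $A$ and the exponent $3$ by $m$.

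\emph{Branch and injectivity.} Set $D(w)=1+iA-iAw^m$. Its zeros satisfy $w^m=(1+iA)/(iA)$, so they have modulus $(R/A)^{1/m}>1$. Hence $D$ is zero-free on a disk of radius $\rho>1$. A holomorphic logarithm of $D$ there, normalised so that $\log D(1)=0$, defines $D^{-1/m}$ with value $1$ at $w=1$, and therefore $\Phi$ is holomorphic on $\{|w|<\rho\}$. For injectivity, suppose $\Phi(w_1)=\Phi(w_2)$. Raising to the $m$th power gives $w_1^m/D(w_1)=w_2^m/D(w_2)$. The map $v\mapsto v/(1+iA-iAv)$ is M\"obius, so $w_1^m=w_2^m$. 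Then $D(w_1)=D(w_2)$, the chosen root factors coincide, and $w_1=w_2$. Differentiating gives
\[
\Phi'(w)=D^{-1/m}+w\cdot A i w^{m-1}D^{-1/m-1}=(1+iA)D(w)^{-(m+1)/m},
\]
which never vanishes. Shrinking $\rho$ if needed, $\Phi$ is conformal on a neighbourhood of $\ol\D$. The boundary data follow directly: $\Phi'(1)=1+iA$, and logarithmic differentiation gives
\[
\frac{\Phi''}{\Phi'}=\frac{m+1}{m}\cdot\frac{iAm w^{m-1}}{D},
\]
which equals $(m+1)iA$ at $w=1$.

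\emph{Image.} Compute $q(\Phi(w))=\bigl(w^m/D+iA\bigr)/R=(1+iA)(w^m\cdot(\ldots))$. The cleanest route is to note that $u=\Phi(w)^m=M(w^m)$ with $M(v)=v/(1+iA-iAv)$, and $M$ maps $\D$ onto the disk $|u+iA|<R$ and $\T$ onto its boundary. This yields the analogue of \labelcref{eq:cubic-coordinate-modulus}, namely
\[
|q(\Phi(w))|^2-1=\frac{|w|^{2m}-1}{|D(w)|^2},
\]
which I would verify by direct expansion. So $\Phi(\D)\subset\Omega_{m,A}$ and $\Phi(\T)\subset\partial\Omega_{m,A}$. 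Next, $\Omega_{m,A}$ is star-shaped about $0$: the disk is convex, contains $0$, and $(tz)^m=t^mz^m$ for $0\le t\le 1$. Hence $\Omega_{m,A}$ is connected. The set $\Phi(\D)$ is open, and it is relatively closed in $\Omega_{m,A}$, because preimages of an interior limit point cannot approach $\T$ by the modulus identity. Therefore $\Phi(\D)=\Omega_{m,A}$, and $\Phi(\T)$ is an analytic Jordan curve bounding it.

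\emph{Critical points.} We have $q_{m,A}'(z)=mz^{m-1}/R$, whose only zero is $0$. Since $|q(0)|=A/R<1$, the critical point $0$ lies in $\Omega_{m,A}$.

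The only step requiring care is the modulus identity and the compatibility of the root branch with the star-shapedness argument; both are routine generalisations of the cubic case. I expect no real obstacle.
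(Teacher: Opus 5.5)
Your proposal is correct and follows the paper's proof essentially step by step. It uses the same $D(w)$, the $m$th-power injectivity argument, the formula $\Phi'=(1+iA)D^{-(m+1)/m}$, the modulus identity (or your equivalent M\"obius description $\Phi^m=M(w^m)$, which the paper states just before the proposition) combined with star-shapedness and the open--closed argument, and the observation that $0$ is the only critical point of $q_{m,A}$. The one step you state too quickly is that $D(w_1)=D(w_2)$ forces equal root factors: this holds because the normalised branch depends only on $w^m$, since $D(w)=g(w^m)$ with $g(v)=1+iA-iAv$ zero-free on the simply connected disk $|v|<R/A$, and the paper notes this explicitly.
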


\begin{proof}
The calculation in \cref{prop:cubic-coordinates} applies with $D(w)=1+iA-iAw^m$. The zeros have modulus $(R/A)^{1/m}>1$, so the chosen root is analytic across $\ol\D$ and is a function of $w^m$ alone. From $\Phi(w_1)=\Phi(w_2)$, take $m$th powers to get $w_1^m=w_2^m$. The root factors are then equal, so $w_1=w_2$. Moreover,
\[
 \Phi'(w)=(1+iA)D(w)^{-1-1/m},
\]
which is nonzero and yields \labelcref{eq:binomial-boundary-data}. The identity
\begin{equation}\label{eq:binomial-modulus}
 |q_{m,A}(\Phi(w))|^2-1
 =\frac{|w|^{2m}-1}{|1+iA-iAw^m|^2}
\end{equation}
and the star-shapedness of $\Omega_{m,A}$ give, as in the cubic case, $\Phi(\D)=\Omega_{m,A}$. Finally, $q_{m,A}'$ has its only zero at $0$, where $|q_{m,A}(0)|=A/R<1$.
\end{proof}

An interior normal-return point satisfying the curvature inequality yields, by \cref{prop:model}, the required holomorphic model. The degree-$29\,984$ example in \cref{sec:explicit-29984} starts from a seed of degree $16$ in this family. We lower the degree with low-degree corrections, several high powers, and an independent normalization point.

\section{From boundary models to polynomials}\label{sec:faber}

The cubic construction used a perturbation whose values are small on the
closed domain and whose derivative grows rapidly outside. For an arbitrary analytic Jordan domain, Faber polynomials supply the
same device. Their remainder estimates give a lower bound for the moduli of all
normalized critical-point quotients and the boundary minimum minus an error
tending to zero. This will finish the proof
of \cref{thm:normal-main}.

Let $\Omega$ be an analytic Jordan domain, and let $\widehat\C=\C\cup\{\infty\}$ denote the extended complex plane. The exterior conformal map of $\Omega$ is the unique conformal map
\[
 \Psi:\widehat\C\setminus\ol\Omega\rightarrow
 \widehat\C\setminus\ol\D,
 \qquad \Psi(z)=\gamma z+O(1),\quad\gamma>0.
\]
Let $\psi=\Psi^{-1}$. We built the model in \cref{sec:geometry} with the interior coordinate $W$. For the polynomial perturbation, we need the exterior coordinate $\Psi$. Both map the boundary to $\T$, but they usually assign different angles to its points. The choice $\gamma>0$ fixes the rotation of $\Psi$.

We have $|\Psi|=1$ on the boundary and $|\Psi|>1$ outside. Thus the powers $\Psi^n$ have the exterior growth we need. Retaining only their nonnegative Laurent powers at infinity gives the $n$th \emph{Faber polynomial}, denoted by $H_n$. For example, if
$
 \Psi(z)=\gamma z+c_0+c_1z^{-1}+O(z^{-2}),
$
then
\[
 H_1(z)=\gamma z+c_0,\qquad
 H_2(z)=\gamma^2z^2+2\gamma c_0z+c_0^2+2\gamma c_1.
\]
In general, $H_n$ has degree $n$ and leading coefficient $\gamma^n$; see \cite{Curtiss,TohTrefethen}. Because the boundary is analytic, the discarded Laurent terms are exponentially small near $\partial\Omega$. We will also bound $H_n$ inside the domain.

\subsection{Polynomial parts and the exponential remainder}\label{subsec:faber-remainder}

The analytic boundary allows $\psi=\Psi^{-1}$ to extend injectively and holomorphically to $\{|w|>r_*\}$ for some $r_*\in(0,1)$. Define
\[
 \Gamma_s=\psi(\{|w|=s\}),\qquad s>r_*,
\]
and let $K_s$ be the compact set enclosed by $\Gamma_s$. Thus $\Gamma_1=\partial\Omega$ and $K_1=\ol\Omega$. In the collar and exterior, we can therefore define
\[
 E_n=H_n-\Psi^n.
\]
Once $E_n$ is bounded near the boundary, the maximum modulus principle bounds the polynomial $H_n$ on the whole enclosed domain.

\begin{lemma}[Exponential remainder in a fixed collar]\label{lem:faber-remainder}
Fix $r_*<\rho<s<1$. In the exterior of $\Gamma_\rho$,
\begin{equation}\label{eq:faber-cauchy}
 E_n(z)=\frac1{2\pi i}\int_{\Gamma_\rho}
             \frac{\Psi(\xi)^n}{\xi-z}\dd\xi.
\end{equation}
For every fixed $j\ge0$, there is a constant $C_j$ such that
\begin{equation}\label{eq:faber-error}
 |E_n^{(j)}(z)|\le C_j\rho^n
 \qquad(z\text{ on or outside }\Gamma_s).
\end{equation}
Moreover,
\begin{equation}\label{eq:faber-inside}
 \|H_n\|_{K_1}=O(1),\qquad
 \|H_n\|_{K_s}+n^{-1}\|H_n'\|_{K_s}=O(s^n).
\end{equation}
For every fixed $a\in\Omega$, choosing $s<1$ sufficiently close to $1$ gives
\begin{equation}\label{eq:faber-a}
 |H_n(a)|+n^{-1}|H_n'(a)|=O(s^n).
\end{equation}
\end{lemma}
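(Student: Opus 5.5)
The plan is to derive the Cauchy representation \labelcref{eq:faber-cauchy} from the Laurent expansion of $\Psi^n$ at infinity, and then read off every estimate from it.

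For $z$ outside $\Gamma_\rho$, choose $R$ large with $z$ inside $\Gamma_R$. The function $\Psi^n$ is holomorphic in the annular region between $\Gamma_\rho$ and $\Gamma_R$, so Cauchy's formula gives
\[
 \Psi(z)^n=\frac1{2\pi i}\int_{\Gamma_R}\frac{\Psi(\xi)^n}{\xi-z}\dd\xi-\frac1{2\pi i}\int_{\Gamma_\rho}\frac{\Psi(\xi)^n}{\xi-z}\dd\xi .
\]
The first integral is the polynomial part $H_n(z)$, by the standard Laurent splitting at infinity: for $|\xi|>|z|$, expand $1/(\xi-z)$ in powers of $z/\xi$. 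Only the terms of $\Psi^n$ with nonnegative powers survive, and they give the principal part at $\infty$. The integral is also independent of $R$, and it defines a polynomial, which is entire. Hence $E_n=H_n-\Psi^n$ equals the $\Gamma_\rho$ integral, with the sign as displayed after orienting that integral correctly.

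The derivative bounds \labelcref{eq:faber-error} follow by differentiating under the integral. We have $|\Psi|=\rho$ on $\Gamma_\rho$, and $|\xi-z|\ge\dist(\Gamma_\rho,\Gamma_s)>0$ for $z$ on or outside $\Gamma_s$. This gives $|E_n^{(j)}(z)|\le j!\,\mathrm{len}(\Gamma_\rho)\,\rho^n/\dist^{j+1}$, which is uniform in $z$. The same argument works for $z$ near infinity, since $|\xi-z|$ only grows there.

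For \labelcref{eq:faber-inside}, note that on $\Gamma_1$ we have $|H_n|\le|\Psi^n|+|E_n|\le1+C_0\rho^n$. The maximum principle then gives $\|H_n\|_{K_1}=O(1)$. For $K_s$ the formula $E_n=H_n-\Psi^n$ is only valid outside $\Gamma_\rho$, so apply the maximum principle on $\Gamma_s$ with a pair $\rho<s$ chosen via the first step. On $\Gamma_s$ this gives $|H_n|\le s^n+C_0\rho^n=O(s^n)$, hence $\|H_n\|_{K_s}=O(s^n)$.

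For the derivative, use $H_n'=E_n'+n\Psi^{n-1}\Psi'$ on $\Gamma_s$. Since $\Psi'$ is bounded on the fixed curve, this gives $|H_n'|\le C_1\rho^n+Cn s^{n-1}=O(ns^n)$, and the maximum principle extends the bound to $K_s$. A cleaner route to the same bound is Cauchy's estimate from $K_{s'}$ with $s<s'$, at the cost of replacing $s$ by $s'$. The form stated needs the first route.

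Finally, for \labelcref{eq:faber-a}, since $a\in\Omega=\bigcup_{s<1}\operatorname{int}K_s$, pick $s<1$ with $a\in K_s$ (and $s>r_*$). Then apply \labelcref{eq:faber-inside}.

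The main point needing care is the first step: identifying the $\Gamma_R$ integral with the Faber polynomial and fixing the orientation and sign. The rest is routine maximum-principle bookkeeping. One must also ensure $\rho<s$ are chosen so that $\rho^n\le s^n$ is absorbed.
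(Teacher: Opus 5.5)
Your proposal is correct and follows the paper's proof essentially step by step. It obtains the Cauchy representation from the Laurent splitting on $\Gamma_R$ together with the contribution at $\xi=z$, differentiates under the integral using $|\Psi|=\rho$ on the contour, and applies the maximum modulus principle to $H_n=\Psi^n+E_n$ and $H_n'=n\Psi^{n-1}\Psi'+E_n'$ on $\Gamma_1$ and $\Gamma_s$. The missing factor $1/(2\pi)$ in your derivative constant is harmless, and the $\Gamma_\rho$ integral already carries the displayed sign when both contours are oriented counterclockwise, so no reorientation is needed.
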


\begin{proof}
Take $R$ large enough that $\Gamma_R$ encloses $z$. The Laurent expansion gives
\[
 H_n(z)=\frac1{2\pi i}\int_{\Gamma_R}
              \frac{\Psi(\xi)^n}{\xi-z}\dd\xi.
\]
When $z$ is outside $\Gamma_\rho$, deforming $\Gamma_R$ to $\Gamma_\rho$ crosses the pole at $\xi=z$, with residue $\Psi(z)^n$. Subtracting the residue gives \labelcref{eq:faber-cauchy}.

The curves $\Gamma_\rho$ and $\Gamma_s$ are separated by a positive distance. Differentiation under the integral gives
\[
 |E_n^{(j)}(z)|\le
 \frac{j!\,\operatorname{length}(\Gamma_\rho)}{2\pi\,
        \dist(\Gamma_\rho,\C\setminus\operatorname{int}K_s)^{j+1}}
 \rho^n
\]
on or outside $\Gamma_s$. The decay comes from $|\Psi(\xi)|=\rho<1$ on the contour.

On $\Gamma_1$, the identity $H_n=\Psi^n+E_n$ gives $|H_n|\le1+O(\rho^n)$. On $\Gamma_s$ it gives $H_n=O(s^n)$ and
\[
 n^{-1}H_n'=\Psi^{n-1}\Psi'+n^{-1}E_n'=O(s^n),
\]
where the constants may depend on $s$. The maximum modulus principle extends these bounds to $H_n$ and $H_n'$ in the enclosed domains. Finally, $a\in\operatorname{int}K_s$ for $s$ sufficiently close to $1$, which proves \labelcref{eq:faber-a}.
\end{proof}

The separation of scales from \cref{sec:cubic} now holds on the general domain. The perturbation $(H_n-H_n(a))/n$ is $O(n^{-1})$ on $\ol\Omega$, and its derivative is exponentially small on compact subsets of $\Omega$. Near the boundary, differentiation cancels the factor $1/n$, leaving $\Psi^{n-1}\Psi'$ to balance $f'$.

\subsection{The boundary inequality passes to polynomials}\label{subsec:basic-transfer}

The models in \cref{sec:geometry} and the approximation result \cref{lem:approximation} give normalized polynomials with nonvanishing derivative on $\ol\Omega$ and boundary minimum greater than $1$. We now transfer this inequality to every critical point.

\begin{theorem}[Faber transfer]\label{thm:faber}
Let $a\in\Omega$, and let $f$ be a polynomial satisfying
\[
 f(a)=0,\qquad f'(a)=1,\qquad f'\ne0\quad\text{on }\ol\Omega.
\]
Define
\[
 Q_n(z)=f(z)+\frac{H_n(z)-H_n(a)}n,
 \qquad L=\min_{z\in\partial\Omega}\left|\frac{f(z)}{z-a}\right|.
\]
For all sufficiently large $n$, $Q_n$ has degree $n$ and simple critical points, and
\begin{equation}\label{eq:faber-basic-distance}
 \max_{Q_n'(c)=0}\dist(c,\partial\Omega)=O(n^{-1}),
 \qquad \Smal(Q_n,a)\ge L-O(n^{-1}).
\end{equation}
For some $\eta\in(0,1)$, uniformly over all critical points,
\begin{equation}\label{eq:faber-critical-value}
 Q_n(c)-f(c)
 =-\frac{\Psi(c)f'(c)}{n\Psi'(c)}+O(\eta^n/n).
\end{equation}
The constants depend on the fixed domain, polynomial, and base point. Thus, if $L>1$, the construction gives counterexamples in every sufficiently large degree.
\end{theorem}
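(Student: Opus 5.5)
The plan is to follow the proof of \cref{thm:cubic-transfer}, replacing $q^N$ by the Faber polynomial $H_n$ and $\log|q|$ by $\log|\Psi|$. We have
\[
 Q_n'(z)=f'(z)+\frac{H_n'(z)}{n}.
\]
First, the degree. Since $H_n$ has leading coefficient $\gamma^n\ne0$, $Q_n$ has degree $n$ once $n>\deg f$.

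Next we confine the critical points. Fix $r_*<\rho<s<1<S$, with $s,S$ close to $1$ so that the collar $A$ between $\Gamma_s$ and $\Gamma_S$ avoids $a$ and the zeros of $f'$. Such a choice is possible because $f'\ne0$ on $\ol\Omega$, so no zeros of $f'$ lie near $\partial\Omega$ from inside, and we can take $S$ small enough to avoid those outside.
\begin{itemize}
\item Inside $\Gamma_s$, \labelcref{eq:faber-inside} gives $n^{-1}\|H_n'\|_{K_s}=O(s^n)$. Since $\min_{\ol\Omega}|f'|>0$, $Q_n'$ has no zeros in $K_s$ for large $n$.
\item On $\Gamma_S$, \labelcref{eq:faber-error} gives $n^{-1}H_n'=\Psi^{n-1}\Psi'+O(\rho^n/n)$, whose modulus is at least $cS^{n-1}>|f'|$ for large $n$. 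Rouch\'e's theorem then says $Q_n'$ has as many zeros inside $\Gamma_S$ as $H_n'$, namely all $n-1$ of them.
\end{itemize}
So every critical point lies in $A$. There, a critical point $c$ satisfies
\[
 \Psi(c)^{n-1}\Psi'(c)=-f'(c)-n^{-1}E_n'(c).
\]
Taking moduli gives $(n-1)\log|\Psi(c)|=O(1)$. Since $\log|\Psi|$ vanishes on $\partial\Omega$ with nonzero normal derivative $|\Psi'|$, comparing distance with $|\log|\Psi||$ as in the cubic case gives $\dist(c,\partial\Omega)=O(n^{-1})$.

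For the critical-value identity, multiply the critical-point equation by $\Psi(c)/(n\Psi'(c))$. This yields
\[
 \frac{\Psi(c)^n}{n}=-\frac{\Psi(c)f'(c)}{n\Psi'(c)}-\frac{\Psi(c)E_n'(c)}{n^2\Psi'(c)},
\]
where the last term is $O(\rho^n/n^2)$ uniformly on $A$. Moreover $H_n(c)=\Psi(c)^n+E_n(c)$ with $E_n(c)=O(\rho^n)$, and $H_n(a)=O(s_a^n)$ by \labelcref{eq:faber-a}. Substituting into $Q_n(c)-f(c)=(H_n(c)-H_n(a))/n$ gives \labelcref{eq:faber-critical-value} with $\eta=\max(\rho,s_a)<1$. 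In particular $Q_n(c)=f(c)+O(n^{-1})$ uniformly over critical points, and $Q_n'(a)=1+n^{-1}H_n'(a)=1+O(s_a^n)$.

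Since $A$ stays away from $a$ and $|f(z)/(z-a)|$ is Lipschitz near $\partial\Omega$, the distance bound gives $|f(c)/(c-a)|\ge L-O(n^{-1})$. Hence $\Smal(Q_n,a)\ge L-O(n^{-1})$. If $L>1$, normalizing by $z\mapsto Q_n(a+z)/Q_n'(a)$ yields counterexamples of every large degree $n$.

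For simplicity, differentiate the critical-point relation as in \labelcref{eq:power-second}:
\[
 Q_n''(c)=f''(c)+(n-1)\Psi^{n-2}\Psi'^2+\Psi^{n-1}\Psi''+O(\rho^n/n).
\]
We use $\Psi(c)^{n-1}\Psi'(c)=-f'(c)+O(\rho^n/n)$ to rewrite this. The middle term becomes $-(n-1)f'(c)\Psi'(c)/\Psi(c)$, which has modulus at least a constant times $n$. The remaining terms are $O(1)$, so $Q_n''(c)\ne0$.

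The step most likely to cause difficulty is uniformity in the collar. The constants in \labelcref{eq:faber-error} hold on or outside $\Gamma_s$, and the Rouch\'e count needs the approximation $n^{-1}H_n'\approx\Psi^{n-1}\Psi'$ to dominate $f'$ on $\Gamma_S$. Both rest on \cref{lem:faber-remainder} with fixed $\rho<s$, so the argument should close once the contours are fixed before letting $n\to\infty$.
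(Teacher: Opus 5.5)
Your proposal is correct and follows the paper's proof essentially step for step: exclusion of zeros from $K_s$, a zero count on an outer level curve, the modulus of the critical-point equation for the $O(n^{-1})$ distance bound, multiplication by $\Psi(c)/\Psi'(c)$ for the critical-value formula with $\eta=\max(\rho,s_a)$, and the second-derivative substitution for simplicity. The one place you are terser than the paper is the count on $\Gamma_S$. Rouch\'e against $H_n'/n$ needs all $n-1$ zeros of $H_n'$ to lie inside $\Gamma_S$, which you assert without proof. It follows from the same estimate: on and outside $\Gamma_S$ one has $|\Psi^{n-1}\Psi'|\ge S^{n-1}\inf|\Psi'|$ with $\inf|\Psi'|>0$ (since $\Psi'\to\gamma>0$ at infinity), and this dominates $|E_n'|/n=O(\rho^n/n)$. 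The paper instead computes the winding number of $\Psi^{n-1}\Psi'$ on the outer curve directly, which amounts to the same thing.
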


\begin{proof}
For $n>\deg f$, the leading coefficient of $H_n$ gives $\deg Q_n=n$. Choose $r_*<\rho<s<1<R$ so that $a\in\operatorname{int}K_s$ and $f'$ has no zeros on $K_R$. Such a choice is possible since $f'$ has no zeros on the compact set $\ol\Omega$. A fixed collar lies between $\Gamma_s$ and $\Gamma_R$, on which the exterior coordinate is defined.

By \cref{lem:faber-remainder}, $H_n'/n=O(s^n)$ on $K_s$. The positive minimum of $|f'|$ there ensures that $Q_n'$ has no zeros on $K_s$ for large $n$. In the collar,
\begin{equation}\label{eq:faber-derivative}
 Q_n'(z)=f'(z)+\Psi(z)^{n-1}\Psi'(z)+\frac{E_n'(z)}n.
\end{equation}
On $\Gamma_R$, the high-power term grows like $R^{n-1}$ while the other terms remain uniformly bounded. Thus $Q_n'(\Gamma_R)$ has the same winding number about $0$ as $(\Psi^{n-1}\Psi')(\Gamma_R)$. To compute a winding number, divide the total change of argument by $2\pi$. It is unchanged if the function is nonzero on the contour and the perturbation is smaller there than its modulus.

Under $\Psi$, the curve $\Gamma_R$ goes once counterclockwise around $|w|=R$. For $\Psi'$, holomorphy and nonvanishing in the exterior, together with the limit $\gamma>0$ at infinity, give winding number zero. The winding number of $Q_n'(\Gamma_R)$ is consequently $n-1$. Applied to the polynomial $Q_n'$, the argument principle gives $n-1$ zeros inside $\Gamma_R$, counted with multiplicity. This exhausts the degree. The earlier exclusion of $K_s$ confines every critical point to the fixed collar.

At a critical point $c$, \labelcref{eq:faber-derivative} gives
\[
 \Psi(c)^{n-1}
 =-\frac{f'(c)+E_n'(c)/n}{\Psi'(c)}.
\]
The right side is uniformly bounded above and away from zero in the collar. Hence $(n-1)\log|\Psi(c)|=O(1)$, so $|\Psi(c)|=1+O(n^{-1})$. The point
\[
 \zeta_c=\psi\!\left(\frac{\Psi(c)}{|\Psi(c)|}\right)
\]
lies on $\partial\Omega$. The derivative $\psi'$ is bounded on the fixed annulus. Comparing along a radial segment therefore gives $|c-\zeta_c|=O(n^{-1})$, the distance estimate in \labelcref{eq:faber-basic-distance}. Differentiating again and using the critical equation gives
\[
 Q_n''(c)=-(n-1)\frac{f'(c)\Psi'(c)}{\Psi(c)}+O(1).
\]
The coefficient of $n-1$ is bounded away from zero in modulus, so all critical points are simple for large $n$.

Multiplying the critical equation by $\Psi(c)/\Psi'(c)$ and substituting into $Q_n$ gives
\begin{align}\label{eq:faber-value-exact}
 Q_n(c)-f(c)
 &=-\frac{\Psi(c)f'(c)}{n\Psi'(c)}
   +\frac{E_n(c)}n
   -\frac{\Psi(c)E_n'(c)}{n^2\Psi'(c)}
   -\frac{H_n(a)}n.
\end{align}
The remainder estimates give \labelcref{eq:faber-critical-value}. By definition, $Q_n(a)=0$; also, $Q_n'(a)=1+O(\eta^n)$. Since the critical points stay a positive distance from $a$,
\[
 \left|\frac{Q_n(c)}{(c-a)Q_n'(a)}\right|
 =\left|\frac{f(c)}{c-a}\right|+O(n^{-1}).
\]
The quotient $f(z)/(z-a)$ has bounded derivative on the collar, so its modulus changes by $O(n^{-1})$ between $c$ and $\zeta_c$. Taking the minimum over the critical points gives the lower bound $L-O(n^{-1})$.
\end{proof}

The proof also applies uniformly to perturbations $f+\lambda_n(H_n-H_n(a))/n$ with $0<c\le|\lambda_n|\le C<\infty$. The critical equation again bounds $\lambda_n\Psi^n$ by the same quotient, so the leading critical-value correction is unchanged. For the cubic lemniscate, $\Psi^3=q$ and hence $H_{3N}=q^N$. The choices $n=3N$ and $\lambda_n=3$ recover $P_N$.

\subsection{Completing the normal-return construction}\label{subsec:normal-proof}

\begin{proof}[Proof of \cref{thm:normal-main}]
Choose $\Phi:\D\to\Omega$ with $\Phi(1)=\zeta$. \Cref{prop:curvature} shows that \labelcref{eq:normal-return} implies the hypotheses of \cref{prop:model}. Fix an admissible $r<1$ and set $g=\Lambda_r-1>0$.

Apply \cref{lem:approximation} with $\eta<g/3$ to obtain a normalized polynomial $f$ with nonvanishing derivative on $\ol\Omega$ and boundary minimum $L>1+2g/3$. By \cref{thm:faber},
\[
 \Smal(Q_n,a)\ge L-O(n^{-1}).
\]
For sufficiently large $n$, the error is at most $g/3$, giving $\Smal(Q_n,a)>1+g/3$. The normalisation
\[
 p_n(z)=\frac{Q_n(a+z)}{Q_n'(a)}
\]
gives $p_n(0)=0$, $p_n'(0)=1$ and $\Smal(p_n,0)=\Smal(Q_n,a)$. Taking $\delta=g/3$ proves the theorem.
\end{proof}

\noindent\emph{Size of the margin.}
For each admissible $r$, the proof gives
\[
 \delta=\frac{\Lambda_r-1}{3}
 =\frac13\left(\frac{|1-rw_a|}{|w_a-r|}-1\right),
 \qquad w_a=\Phi^{-1}(a).
\]
Any fixed $0<\delta<\Lambda_r-1$ follows by taking a more accurate approximation and a larger $n_0$.

Choose $\Phi$ with $\Phi(0)=a$ and $\Phi(1)=\zeta$. Then $w_a=0$ and $\Lambda_r=1/r$, so
\[
 \delta=\frac{1-r}{3r}
 =\frac{\varepsilon}{3(1-\varepsilon)},
 \qquad r=1-\varepsilon.
\]
The admissible size of $\varepsilon$ depends on the escape coefficient $\sigma=1+d\cdot\kappa(\zeta)>0$. With this normalisation, \cref{lem:splitting} and \labelcref{eq:curvature-identity} give constants $C_*>0$ and $\varepsilon_0\in(0,1)$ such that, for $0<\varepsilon<\varepsilon_0$, the only possible zeros of $N_{1-\varepsilon}$ in $\ol\D$ are the two roots near $1$, with
\[
 \left|\,|w_\pm|^2-1-2\sigma\varepsilon\,\right|
 \le C_*\varepsilon^{3/2}.
\]
For $0<\varepsilon<\varepsilon_0$, the condition $C_*\sqrt\varepsilon\le\sigma$ therefore gives $|w_\pm|^2\ge1+\sigma\varepsilon>1$. The constants reflect the remainder bounds and the exclusion of other zeros for the conformal map. Estimating them gives a numerical choice of $\delta$.

\section{The geometry of the critical points}\label{sec:sampling}

\Cref{sec:faber} places all critical points near the boundary and bounds
the mean-value functional from below. We now use the phase of the critical-point
equation to tell us how they are distributed along the boundary.
We obtain a first-order location formula and a limiting distribution,
with a critical point near every boundary point. We then show that the supremum of minima over admissible holomorphic models equals the universal constant $M$.

\subsection{Modulus locates the roots; phase distributes them}\label{subsec:phase-analysis}

As in the cubic construction of \cref{sec:cubic}, critical points arise from cancellation between $f'$ and the derivative of a high power. Omitting the exponentially small remainder in \labelcref{eq:faber-derivative} gives
\[
 f'(z)+\Psi(z)^{n-1}\Psi'(z)=0.
\]
With $D=n-1$ and $w=\Psi(z)$, it becomes
\[
 w^D=b(w),\qquad b(w)=-\psi'(w)f'(\psi(w)).
\]
On a sufficiently small fixed annulus about $\T$, $b$ is holomorphic and bounded in modulus above and away from zero. Every solution in this annulus therefore satisfies
\[
 D\log|w|=\log|b(w)|,
\]
so $|w|=1+O(D^{-1})$. This determines the radial location of the solutions. The angular location also depends on the argument of $b$.

For a continuous choice of argument, we seek a holomorphic logarithm $h$ of $b$, with $\Ree h=\log|b|$ and $\Imm h$ a continuous choice of argument. Such a logarithm need not exist on an annulus. For example, $b(w)=w$ gains $2\pi$ in argument on one circuit of $\T$. Recall that, in general, the total change of argument is $2\pi$ times the winding number
\[
 \wind(b(\T),0)
 =\frac1{2\pi i}\int_{\T}\frac{b'(w)}{b(w)}\dd w,
\]
and a holomorphic logarithm exists precisely when this integer is zero.

For $b=-\psi'(f'\circ\psi)$, each factor has zero winding number. The derivative $\psi'$ is holomorphic and nonzero for $|w|>r_*$ and tends to $\gamma^{-1}>0$ at infinity. Deforming $\T$ to a large circle therefore gives zero winding number. The curve $\psi(e^{it})$ traverses $\partial\Omega$ once counterclockwise and the argument principle gives zero winding for $f'\circ\psi$ because $f'$ has no zeros in $\Omega$. Winding numbers add under multiplication, and $-1$ contributes zero. Hence
\begin{equation}\label{eq:faber-zero-winding}
 \wind(b(\T),0)=0.
\end{equation}

Choose $h=\log b$. Writing $w=e^\xi$, the equation $w^D=b(w)$ becomes
\[
 D\xi=h(e^\xi)+2\pi ik,\qquad k\in\mathbb Z.
\]
Since $h$ is bounded on a smaller fixed annulus, $\xi$ is within $O(D^{-1})$ of $2\pi ik/D$. Exponentiation gives the equally spaced starting points $e^{2\pi ik/D}$. The Faber remainder adds a small holomorphic perturbation to $b$. The following lemma allows for this perturbation and counts all solutions in the annulus.

\begin{lemma}[Sampling a circle by a perturbed power equation]\label{lem:phase}
Let $b$ be holomorphic and nonzero on an annulus containing $\T$, and suppose its winding number on $\T$ is zero. Let $b_D$ be holomorphic there and converge to $b$ locally uniformly. On a smaller fixed annulus, for all sufficiently large integers $D$, we can choose holomorphic logarithms
\[
 h=\log b,\qquad h_D=\log b_D,
\]
with $h_D\to h$ locally uniformly and uniformly bounded derivatives. The equation
\begin{equation}\label{eq:phase-equation}
 w^D=b_D(w)
\end{equation}
has $D$ distinct simple solutions in a sufficiently small fixed annulus about $\T$, and no others there. They can be labelled by $k=0,\ldots,D-1$ so that, with $\omega_{D,k}=e^{2\pi ik/D}$,
\begin{equation}\label{eq:phase-expansion}
 w_{D,k}=\omega_{D,k}
 \exp\left(\frac{h_D(\omega_{D,k})}{D}+O(D^{-2})\right),
\end{equation}
uniformly in $k$.
\end{lemma}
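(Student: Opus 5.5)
We pass to the logarithmic coordinate $w=e^\xi$ and treat \labelcref{eq:phase-equation} as a small perturbation of $D\xi=2\pi ik$, for each $k$ separately. First we construct the logarithms. Since $b$ has zero winding number on $\T$ and is nonzero on an annulus, its winding number vanishes on every circle in that annulus, so a holomorphic $h=\log b$ exists on a smaller closed annulus $A_0=\{e^{-2\beta}\le|w|\le e^{2\beta}\}$. Uniform convergence $b_D\to b$ on $A_0$ makes $b_D/b$ uniformly close to $1$ there for large $D$. We then set $h_D=h+\operatorname{Log}(b_D/b)$ with the principal branch. This gives $h_D\to h$ uniformly on $A_0$, and Cauchy estimates on a slightly smaller annulus give uniformly bounded derivatives. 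Let $M$ bound $|h_D|$ and $|h_D'|$ (and $|h_D''|$ if needed) on $A_1=\{e^{-\beta}\le|w|\le e^{\beta}\}$.

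Next we reformulate the equation. For $w=e^\xi$ with $|\Ree\xi|\le\beta$, the equation $w^D=b_D(w)$ holds if and only if $D\xi=h_D(e^\xi)+2\pi ik$ for some integer $k$, and $\xi$ is determined modulo $2\pi i$. So we may take $k\in\{0,\dots,D-1\}$ and $\Imm\xi$ in a window of length $2\pi$. For fixed $k$ we write $\xi=2\pi ik/D+\theta$ and define
\[
 T_k(\theta)=\frac{h_D(\omega_{D,k}e^\theta)}{D}.
\]
On the disk $|\theta|\le 2M/D$, for $D$ large, $T_k$ maps the disk into itself, since $|T_k|\le M/D$. It is also a contraction there, with Lipschitz constant at most $M e^{\beta}/D<1$. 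Banach's fixed point theorem gives a unique solution $\theta_k$ in the disk.

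For the expansion, $\theta_k=T_k(\theta_k)=h_D(\omega_{D,k})/D+O(|\theta_k|/D)$. Since $|\theta_k|=O(1/D)$, the error is $O(D^{-2})$, uniformly in $k$, and this is exactly \labelcref{eq:phase-expansion}. For simplicity, we differentiate $w^D-b_D(w)$ at a root to get $w^{D-1}\bigl(D-wb_D'(w)/b_D(w)\bigr)$, which is nonzero for large $D$ because $wh_D'(w)$ is bounded. The $D$ roots are distinct, because their arguments lie within $O(D^{-1})$ of the distinct points $2\pi k/D$ with a margin smaller than $\pi/D$.

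\textbf{Main obstacle: showing there are no other solutions.} The plan is to take the annulus $\{e^{-\beta'}\le|w|\le e^{\beta'}\}$ with $\beta'$ fixed and small. Any solution there satisfies $D\log|w|=\Ree h_D(w)$, so $|\Ree\xi|\le M/D$. Its argument gives $D\Imm\xi=\Imm h_D+2\pi k$, so $\xi$ lies in the disk $|\theta|\le 2M/D$ around the appropriate $2\pi ik/D$. By the uniqueness part of the contraction argument, it is then the $\theta_k$ already found. As an alternative consistency check, the argument principle on the two boundary circles gives exactly $D$ zeros, because $|w^D|$ dominates $|b_D|$ on the outer circle and $|b_D|$ dominates $|w^D|$ on the inner one. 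Together with zero winding of $b_D$, this yields a count of $D-0=D$. The delicate point is making the choice of branch of $h_D$ and the $2\pi k$ bookkeeping uniform in $k$, which the contraction formulation handles.
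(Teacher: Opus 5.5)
Your proposal is the paper's proof except for one step. The following all match and are correct:
\begin{itemize}
\item the logarithm $h_D=h+\operatorname{Log}(b_D/b)$;
\item the contraction $\theta\mapsto h_D(\omega_{D,k}e^\theta)/D$ on $|\theta|\le 2M/D$;
\item exhaustiveness via uniqueness of that fixed point;
\item resubstitution for \eqref{eq:phase-expansion};
\item simplicity from $D-wh_D'(w)\ne0$.
\end{itemize}
The step that fails is distinctness. You only know $|\theta_k|\le M/D$ with $M\ge\sup|h_D|$. The angular offset of $w_{D,k}$ from $\omega_{D,k}$ is $\Imm h_D(\omega_{D,k})/D+O(D^{-2})$. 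Nothing makes this less than half the spacing, $\pi/D$, because $\Imm h$ is a continuous argument of $b$ along $\T$ and may oscillate by more than $2\pi$. For example, take $b_D=b=\exp\bigl(\tfrac52(w-w^{-1})\bigr)$. Every branch then has $\Imm h(e^{it})=5\sin t+2\pi m$, so for each $m$ some roots sit more than $\pi/D$ from their $2\pi k/D$. The contraction disks for neighbouring $k$ then overlap, and your margin argument does not separate the roots.

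The conclusion is still true, and there are two short repairs. The paper uses periodicity. Suppose $w_{D,k}=w_{D,\ell}=w$. The fixed points satisfy $D\xi=2\pi ik+h_D(w)$ and $D\xi'=2\pi i\ell+h_D(w)$, with $\xi-\xi'=2\pi ij$ for some $j\in\mathbb Z$. Subtracting gives $k-\ell=Dj$, so $k=\ell$ because $0\le k,\ell<D$.

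Alternatively, promote your argument-principle ``consistency check'' to part of the proof. It gives exactly $D$ zeros in the annulus counted with multiplicity, and your derivative computation shows they are simple. Your exhaustiveness step shows each equals some $w_{D,k}$. Hence $k\mapsto w_{D,k}$ is a surjection onto a set of $D$ distinct points, so it is injective.

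Relatedly, you cannot normalize both $k\in\{0,\dots,D-1\}$ and $\Imm\xi$ to a fixed window of length $2\pi$: replacing $\xi$ by $\xi+2\pi i$ shifts $k$ by $D$. In the exhaustiveness step, write $\ell=k+Dj$ and replace $\xi$ by $\xi-2\pi ij$, as the paper does.
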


\begin{proof}

Choose a holomorphic logarithm $h$ of $b$. On a smaller closed annulus about $\T$, the quotient $b_D/b$ converges uniformly to $1$. For large $D$, define
\[
 h_D=h+\operatorname{Log}(b_D/b),
\]
where $\operatorname{Log}$ is the branch near $1$ with $\operatorname{Log}(1)=0$. Then $e^{h_D}=b_D$ and $h_D\to h$ uniformly. Cauchy's estimate on a still smaller annulus gives uniform bounds for $h_D$ and $h_D'$.

For $w=e^\xi$, \labelcref{eq:phase-equation} is equivalent to $D\xi-h_D(e^\xi)\in2\pi i\mathbb Z$. Let $\xi_k=2\pi ik/D$. For each $k=0,\ldots,D-1$, solve
\begin{equation}\label{eq:phase-fixedpoint}
 \xi=\xi_k+\frac{h_D(e^\xi)}D.
\end{equation}
Call the right side $T_{D,k}(\xi)$. With $M\ge1$ a common bound for $|h_D|$, $T_{D,k}$ maps the disk $|\xi-\xi_k|\le2M/D$ into itself, since
$
 |T_{D,k}(\xi)-\xi_k|\le M/D.
$
For large $D$, these disks lie in a fixed vertical strip where the derivative of $h_D(e^\xi)$ is uniformly bounded by $C$. Hence
\[
 |T_{D,k}(\xi)-T_{D,k}(\widetilde\xi)|
 \le \frac{C}{D}|\xi-\widetilde\xi|
\]
for points in the same disk. Once $D>C$, the contraction mapping theorem gives a unique fixed point in each disk.

The resulting $w$ values are distinct even when contraction disks overlap. If the fixed points for $k$ and $\ell$ differed by $2\pi i j$, periodicity in \labelcref{eq:phase-fixedpoint} would give $k-\ell=Dj$. Since $0\le k,\ell<D$, this forces $k=\ell$. Each solution is simple, since
\[
 \frac{\mathrm d}{\mathrm dw}(w^D-b_D(w))
 =\frac{b_D(w)}w\bigl(D-wh_D'(w)\bigr)\ne0
\]
for sufficiently large $D$.

Conversely, take any solution $w$ in the smaller fixed annulus and choose $\xi$ with $e^\xi=w$. Then $D\xi=h_D(w)+2\pi i\ell$ for some $\ell\in\mathbb Z$. Express $\ell=k+Dj$ with $0\le k<D$, and replace $\xi$ by $\xi-2\pi ij$. The resulting logarithm satisfies \labelcref{eq:phase-fixedpoint} and $|\xi-\xi_k|\le M/D$, so it is the fixed point already found. There are no further solutions.

Substituting the bound $\xi-\xi_k=O(D^{-1})$ once more into \labelcref{eq:phase-fixedpoint} gives
\[
 \xi=\xi_k+D^{-1}h_D(\omega_{D,k})+O(D^{-2}),
\]
uniformly in $k$. Exponentiation gives \labelcref{eq:phase-expansion}.

\end{proof}

\subsection{Boundary sampling and harmonic measure}\label{subsec:critical-sampling}

Each point of an equally spaced grid in the exterior conformal angle has one nearby critical point. The resulting arclength density depends only on the domain and is generally nonuniform. The polynomial $f$ affects the first-order displacement through the function $h$ below.

\begin{theorem}[First-order sampling and limiting distribution]\label{thm:sampling}
Under the assumptions of \cref{thm:faber}, $Q_n$ has only simple critical points for sufficiently large $n$. Let $D=n-1$. There is a holomorphic branch
\begin{equation}\label{eq:h-definition}
 h(w)=\log\bigl(-\psi'(w)f'(\psi(w))\bigr)
\end{equation}
on a fixed annulus about $\T$, and the critical points can be labeled so that
\begin{equation}\label{eq:critical-sampling}
 c_{n,k}=\psi(\omega_{D,k})
 +\frac{\omega_{D,k}\psi'(\omega_{D,k})h(\omega_{D,k})}{D}
 +O(D^{-2}),\qquad 0\le k<D,
\end{equation}
uniformly in $k$. Their counting measures satisfy
\begin{equation}\label{eq:critical-measure}
 \frac1{n-1}\sum_{Q_n'(c)=0}\delta_c
 \ \rightharpoonup\ \mu_\Omega,
 \qquad
 \dd\mu_\Omega(z)=\frac{|\Psi'(z)|}{2\pi}\,|\dd z|
 \quad(z\in\partial\Omega).
\end{equation}
Here $\delta_c$ is the unit point mass at $c$. Weak convergence means convergence after integration against every bounded continuous function. The limiting measure $\mu_\Omega$ is the harmonic measure from infinity, or equivalently the logarithmic equilibrium measure of $\ol\Omega$.
\end{theorem}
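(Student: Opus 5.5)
The plan is to reduce the critical-point equation of $Q_n$ to the perturbed power equation of \cref{lem:phase}, via the exterior coordinate $w=\Psi(z)$, and then push the solutions back to the $z$-plane with $\psi$.

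\textbf{Step 1: reduction to \cref{lem:phase}.} By \cref{thm:faber}, for large $n$ every critical point lies in a fixed collar between $\Gamma_s$ and $\Gamma_R$. There \labelcref{eq:faber-derivative} holds, and after the substitution $z=\psi(w)$, multiplying by $\psi'(w)$ and using $\Psi'(\psi(w))\psi'(w)=1$, the equation $Q_n'(z)=0$ becomes
\[
 w^{D}=b_D(w),\qquad b_D(w)=-\psi'(w)\Bigl(f'(\psi(w))+\frac{E_n'(\psi(w))}{n}\Bigr),\qquad D=n-1 .
\]
By \labelcref{eq:faber-error}, $b_D\to b=-\psi'\,(f'\circ\psi)$ uniformly on a fixed annulus, with error $O(\rho^n/n)$. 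The function $b$ is nonvanishing because $f'\neq0$ on $K_R$. Its winding number is zero by \labelcref{eq:faber-zero-winding}.

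\textbf{Step 2: applying the lemma and counting.} \cref{lem:phase} then gives exactly $D$ simple solutions $w_{D,k}$ in a small annulus. Since $\psi$ is conformal there, these give $D$ distinct simple critical points of $Q_n$. Every critical point has $|\Psi(c)|=1+O(n^{-1})$ by \cref{thm:faber}, so for large $n$ all of them lie in this annulus. Since $\deg Q_n'=n-1=D$, these are all the critical points, and they are simple. This recovers the simplicity statement.

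\textbf{Step 3: first-order location.} From \labelcref{eq:phase-expansion},
\[
 w_{D,k}=\omega_{D,k}\Bigl(1+\frac{h_D(\omega_{D,k})}{D}+O(D^{-2})\Bigr).
\]
Here $h_D-h=\operatorname{Log}(b_D/b)=O(\rho^n)$, so $h_D$ may be replaced by $h$ at the cost of an exponentially small error. Taylor expansion of $\psi$ at $\omega_{D,k}$, with $\psi''$ bounded on the annulus, then gives \labelcref{eq:critical-sampling} uniformly in $k$.

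\textbf{Step 4: limiting distribution.} Take a bounded continuous test function $\phi$. By Step 3, $|c_{n,k}-\psi(\omega_{D,k})|=O(D^{-1})$ uniformly in $k$. Uniform continuity of $\phi$ on a compact neighbourhood of $\partial\Omega$ therefore gives
\[
 \frac1D\sum_k\phi(c_{n,k})=\frac1D\sum_k\phi\bigl(\psi(e^{2\pi ik/D})\bigr)+o(1).
\]
The right side is a Riemann sum converging to
\[
 \frac1{2\pi}\int_0^{2\pi}\phi\bigl(\psi(e^{it})\bigr)\,\dd t .
\]
With $z=\psi(e^{it})$ we have $\dd t=|\Psi'(z)|\,|\dd z|$, which gives $\mu_\Omega$.

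\textbf{Step 5: identification of $\mu_\Omega$.} That $\mu_\Omega$ is the harmonic measure from infinity and the equilibrium measure is the standard fact that $\log|\Psi|$ is the Green function of $\widehat\C\setminus\ol\Omega$ with pole at $\infty$. The harmonic measure is then $\frac{1}{2\pi}\partial_\nu\log|\Psi|\,|\dd z|=\frac{|\Psi'|}{2\pi}|\dd z|$, and this coincides with the equilibrium measure of $\ol\Omega$.

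The main obstacle I expect is the bookkeeping in Steps 1 and 2. One must check that the annulus where \cref{lem:phase} gives existence and uniqueness actually contains every critical point. For this I would combine the $O(n^{-1})$ radial confinement from \cref{thm:faber} with the fact that the lemma's annulus is fixed and independent of $D$. One must also check that the holomorphic branch of $h$ is consistent with the statement's choice of branch. Any branch works, since shifting $h$ by $2\pi i m$ only relabels $k\mapsto k+m$ and leaves \labelcref{eq:critical-sampling} unchanged modulo this relabelling.
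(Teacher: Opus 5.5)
Your proposal is correct and follows essentially the same route as the paper. Both substitute $z=\psi(w)$ to turn the critical-point equation into $w^{n-1}=b_n(w)$, then use \cref{lem:phase} with the zero-winding property and the degree count to get all $n-1$ simple critical points and their first-order expansion, and finish with Riemann sums and the Green-function identification of $\mu_\Omega$; your extra remarks on the redundant radial confinement and on the branch choice amounting to a relabelling $k\mapsto k+m$ are correct but not needed.
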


\begin{proof}
Take the collar and derivative equation \labelcref{eq:faber-derivative} from the proof of \cref{thm:faber}. With $z=\psi(w)$, the critical-point equation becomes
\begin{equation}\label{eq:faber-bn}
 w^{n-1}=b_n(w),\qquad
 b_n(w)=-\psi'(w)\left(f'(\psi(w))+\frac{E_n'(\psi(w))}n\right).
\end{equation}
On a smaller fixed annulus, $b_n=b+O(\rho^n/n)$, where $b=-\psi'(f'\circ\psi)$ and $\rho<1$. The zero winding in \labelcref{eq:faber-zero-winding} allows us to apply \cref{lem:phase} with $D=n-1$. The $n-1$ solutions account for the entire critical set by \cref{thm:faber}.

The construction in the proof of \cref{lem:phase} gives $h_n=h+\operatorname{Log}(b_n/b)$, so $h_n-h=O(\rho^n/n)$. Substitute this into \labelcref{eq:phase-expansion} and expand $\psi$ to obtain \labelcref{eq:critical-sampling}, uniformly in $k$.

\smallskip\noindent
\emph{The limiting distribution.}
For every continuous function $g$ on a fixed compact neighborhood of $\partial\Omega$, the sampling formula and Riemann sums give
\begin{align*}
 \frac1D\sum_{k=0}^{D-1}g(c_{n,k})
 &-\frac1D\sum_{k=0}^{D-1}g(\psi(e^{2\pi ik/D}))\rightarrow0,\\
 \frac1D\sum_{k=0}^{D-1}g(\psi(e^{2\pi ik/D}))
 &\rightarrow\frac1{2\pi}\int_0^{2\pi}g(\psi(e^{it}))\dd t.
\end{align*}
The boundary parametrisation $z=\psi(e^{it})$ gives $|\dd z|=|\psi'(e^{it})|\dd t$ and $|\Psi'(z)|=|\psi'(e^{it})|^{-1}$. Hence uniform angular measure $\dd t/(2\pi)$ has arclength density $|\Psi'(z)|/(2\pi)$, proving \labelcref{eq:critical-measure}. By conformal invariance, this measure is the harmonic measure from infinity. By the Green-function characterization, it is also the equilibrium measure \cite{Ransford}.
\end{proof}

For nonempty compact sets $A,B\subset\C$, the Hausdorff distance is
\[
 d_H(A,B)=\max\left\{\sup_{a\in A}\dist(a,B),\
                         \sup_{b\in B}\dist(b,A)\right\},
 \qquad
 \dist(z,A)=\inf_{a\in A}|z-a|.
\]
The following corollary shows that every boundary point has a nearby critical point.

\begin{corollary}[The critical set samples the whole boundary]\label{cor:faber-limit}
Under the assumptions of \cref{thm:faber}, the Hausdorff distance between the critical set of $Q_n$ and $\partial\Omega$ is $O(n^{-1})$, and
\begin{equation}\label{eq:faber-limit}
 \Smal(Q_n,a)=L+O(n^{-1}).
\end{equation}
\end{corollary}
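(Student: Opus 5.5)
The corollary makes two claims: the Hausdorff distance is $O(n^{-1})$, and $\Smal(Q_n,a)=L+O(n^{-1})$. Both come from combining \cref{thm:faber} with the sampling formula \labelcref{eq:critical-sampling} of \cref{thm:sampling}.

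\emph{Hausdorff distance.} It has two halves. The first is $\sup_c\dist(c,\partial\Omega)=O(n^{-1})$, which is exactly the distance estimate in \labelcref{eq:faber-basic-distance}. For the second, take $\zeta\in\partial\Omega$ and write $\zeta=\psi(e^{it})$. Choose $k$ with $|e^{it}-\omega_{D,k}|\le\pi/D$. Since $\psi'$ is bounded on $\T$, we get $|\zeta-\psi(\omega_{D,k})|=O(D^{-1})$. By \labelcref{eq:critical-sampling}, and because $\psi'$ and $h$ are bounded on $\T$, also $|c_{n,k}-\psi(\omega_{D,k})|=O(D^{-1})$ uniformly in $k$. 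Hence $\dist(\zeta,\{c_{n,k}\})=O(n^{-1})$ uniformly in $\zeta$. This is the half that needs the full labeling of all $D$ critical points from \cref{lem:phase}; without it the estimate does not close.

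\emph{Value of $\Smal$.} \Cref{thm:faber} already gives the lower bound $\Smal(Q_n,a)\ge L-O(n^{-1})$, so only the matching upper bound remains. Let $\zeta_0\in\partial\Omega$ attain $L=|f(\zeta_0)/(\zeta_0-a)|$. By the previous step there is a critical point $c$ with $|c-\zeta_0|=O(n^{-1})$. The proof of \cref{thm:faber} showed, uniformly over critical points,
\[
 \left|\frac{Q_n(c)}{(c-a)Q_n'(a)}\right|=\left|\frac{f(c)}{c-a}\right|+O(n^{-1}).
\]
This uses the critical-value identity \labelcref{eq:faber-critical-value}, the bound $Q_n'(a)=1+O(\eta^n)$, and the fact that the collar stays a fixed distance from $a$. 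The function $z\mapsto f(z)/(z-a)$ is Lipschitz on a neighbourhood of the collar, so $|f(c)/(c-a)|\le L+O(n^{-1})$. Since $\Smal$ is the minimum over critical points, $\Smal(Q_n,a)\le L+O(n^{-1})$.

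No step is a real obstacle: the only point needing care is the uniformity of the $O(D^{-2})$ error in \labelcref{eq:critical-sampling}, which is given.
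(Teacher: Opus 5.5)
Your proposal is correct and follows essentially the same route as the paper. The Hausdorff bound comes from the sampling formula of \cref{thm:sampling} together with the $O(n^{-1})$ net $\{\omega_{n-1,k}\}$ on $\T$, and the lower bound from \cref{thm:faber} is matched by a critical point within $O(n^{-1})$ of a boundary minimiser, using the Lipschitz property of $f(z)/(z-a)$ on the collar; the only cosmetic difference is that you take $\sup_c\dist(c,\partial\Omega)=O(n^{-1})$ from \labelcref{eq:faber-basic-distance} rather than reading it off the sampling formula, which is equivalent.
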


\begin{proof}
By \cref{thm:sampling}, each critical point lies $O(n^{-1})$ from $\psi(\omega_{n-1,k})$. The equally spaced points $\omega_{n-1,k}$ form an $O(n^{-1})$ net on $\T$, and $\psi$ has bounded derivative there. Hence every boundary point lies $O(n^{-1})$ from a critical point. The critical-value comparison in the proof of \cref{thm:faber} gives
\[
 \left|\frac{Q_n(c)}{(c-a)Q_n'(a)}\right|
 =\left|\frac{f(c)}{c-a}\right|+O(n^{-1}).
\]
The function $|f(z)/(z-a)|$ is Lipschitz on the collar, so every critical point gives a value at least $L-O(n^{-1})$. Conversely, a critical point within $O(n^{-1})$ of a boundary minimiser gives a value at most $L+O(n^{-1})$. These two bounds prove \labelcref{eq:faber-limit}.
\end{proof}

\subsection{Boundary minima and the universal constant}\label{subsec:boundary-limit}

For a normalized holomorphic model, define
\[
 \Boundary(F,\Omega,a)=
 \min_{z\in\partial\Omega}\left|\frac{F(z)}{z-a}\right|.
\]
The models in \cref{sec:geometry} have boundary minimum greater than $1$. Applying approximation and Faber transfer (\cref{lem:approximation,thm:faber}) to these models gives lower bounds for the universal constant $M$. In this section, we show the converse: for every polynomial with mean-value functional greater than $1$, there are admissible models whose boundary minima approach that value from below. Thus $M$ is a supremum over boundary models.

\begin{proposition}[Boundary formulation of the universal constant]\label{prop:boundary-universal}
The constant $M$ in \labelcref{eq:sharp-constants} satisfies
\[
 M=\sup_{\Omega,a,F}\Boundary(F,\Omega,a),
\]
where the supremum on the right is over analytic Jordan domains $\Omega$, points $a\in\Omega$, and functions $F$ holomorphic on a neighborhood of $\ol\Omega$ such that
\[
 F(a)=0,\qquad F'(a)=1,\qquad F'\ne0\quad\text{on }\ol\Omega.
\]
\end{proposition}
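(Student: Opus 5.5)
The plan is to prove the two inequalities separately: $M\ge\sup\Boundary$ via the transfer results, and $M\le\sup\Boundary$ by building, from a given polynomial, a model domain cut out by a level set of $p(z)/z$.

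\emph{Lower bound $M\ge\sup\Boundary$.} Fix an admissible triple $(\Omega,a,F)$ with $\Lambda=\Boundary(F,\Omega,a)$, and let $\eta>0$. \Cref{lem:approximation} gives a polynomial $f$ with $f(a)=0$, $f'(a)=1$, $f'\ne0$ on $\ol\Omega$, and boundary minimum $L>\Lambda-\eta$. By \cref{thm:faber}, $\Smal(Q_n,a)\ge L-O(n^{-1})$. The normalisation $p_n(z)=Q_n(a+z)/Q_n'(a)$ preserves the functional, exactly as in the proof of \cref{thm:normal-main}. Hence $M\ge\Lambda-\eta-O(n^{-1})$, and letting $n\to\infty$ and then $\eta\to0$ gives $M\ge\Lambda$.

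\emph{Upper bound $M\le\sup\Boundary$.} The model $F(z)=z$ on the unit disk with $a=0$ is admissible and has boundary minimum $1$, so the right side is at least $1$. It therefore suffices to treat a normalised polynomial $p$ with $S=\Smal(p,0)>1$, and to show that every $\lambda\in(1,S)$ outside a finite set is a boundary minimum of some model.

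Set $g(z)=p(z)/z$. This is a rational function with $g(0)=p'(0)=1$ and poles only at $\infty$; when $\deg p\ge2$ we have $|g|\to\infty$ at infinity. Let $\Omega$ be the connected component of $\{|g|<\lambda\}$ containing $0$, and take $a=0$ and $F=p$. The three checks are as follows.
\begin{enumerate}
\item On $\partial\Omega$ we have $|F(z)/z|=\lambda$ identically, so the boundary minimum equals $\lambda$.
\item Every critical point $c$ of $p$ satisfies $|g(c)|\ge S>\lambda$, so $c\notin\ol\Omega$ and $F'\ne0$ on $\ol\Omega$. The normalisations $F(0)=0$ and $F'(0)=1$ are automatic.
\item The set $\Omega$ must be an analytic Jordan domain. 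This is the main obstacle, and it is handled below.
\end{enumerate}
Letting $\lambda\uparrow S$ then gives $\sup\Boundary\ge S$, and taking the supremum over $p$ gives $M\le\sup\Boundary$.

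\emph{Why $\Omega$ is an analytic Jordan domain.} First choose $\lambda$ to avoid the finitely many values $|g(z)|$ at zeros of $g'$. The level set $\{|g|=\lambda\}$ is then a finite union of disjoint real-analytic closed curves, and $\Omega$ is a bounded domain whose boundary is a union of such curves. It remains to rule out holes. Each component $V$ of $\{|g|>\lambda\}$ must contain $\infty$: otherwise $g$ is holomorphic on the bounded set $V$ with $|g|=\lambda$ on $\partial V$ but $|g|>\lambda$ inside, contradicting the maximum principle. The only other pole is $0$, and $0$ lies in the sublevel set. Hence $\{|g|>\lambda\}\cup\{\infty\}$ is connected.

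Now suppose $\Omega$ had a bounded complementary component $H$. Points of $H$ adjacent to $\partial\Omega$ satisfy $|g|>\lambda$, by regularity of the level. These points would be joined to $\infty$ inside $\{|g|>\lambda\}$, and hence would avoid $\Omega$, contradicting that $H$ is separated from $\infty$ by $\Omega$. Thus $\Omega$ is simply connected. Its boundary is therefore a single regular analytic level curve, so $\Omega$ is an analytic Jordan domain. Moreover, $F=p$ is entire, so it is holomorphic on a neighbourhood of $\ol\Omega$.
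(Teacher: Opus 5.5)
Your proposal is correct and follows essentially the paper's route. For $M\ge\sup\Boundary(F,\Omega,a)$ you use approximation plus Faber transfer, and only the one-sided bound of \cref{thm:faber} is needed (the paper cites the two-sided \cref{cor:faber-limit}). For the converse you take a regular sublevel component of $p(z)/z$ and use the maximum principle to exclude holes; the paper applies it to the bounded interior of a boundary curve of $\Omega$, whereas you apply it to the components of $\{|g|>\lambda\}$.

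One harmless slip: since $p(0)=0$, $g(z)=p(z)/z$ is a polynomial, so $0$ is not a pole of $g$, and the sentence claiming it is can simply be dropped.
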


\begin{proof}
Write $M_{\partial}$ for the supremum on the right. 
For any admissible model and any $\eta>0$, \cref{lem:approximation} gives a normalised polynomial $f$ with no critical points on $\ol\Omega$ and
\[
 \Boundary(f,\Omega,a)>\Boundary(F,\Omega,a)-\eta.
\]
By \cref{cor:faber-limit}, the Faber perturbations satisfy
\[
 \Smal(Q_n,a)\rightarrow\Boundary(f,\Omega,a).
\]
Translating the base point and normalizing the derivative preserve this functional. Hence
\[
 M\ge\Boundary(f,\Omega,a)>\Boundary(F,\Omega,a)-\eta.
\]
Letting $\eta\downarrow0$ and taking the supremum gives $M\ge M_{\partial}$.

The model $F(z)=z-a$ gives $M_{\partial}\ge1$. Consider a normalised polynomial $p$ with $\Smal(p,0)>1$. Its divided-difference polynomial
\[
 G(z)=\frac{p(z)}z,\qquad G(0)=1,
\]
is nonconstant. On a level set $|G|=L$, the boundary quotient has constant modulus $L$. Choose
$
 1<L<\Smal(p,0)
$
such that $G'(z)\ne0$ whenever $|G(z)|=L$. This excludes only finitely many levels, the moduli of the critical values of $G$. Let $\Omega$ be the component of $\{|G|<L\}$ containing $0$.

Since $G$ is a nonconstant polynomial and $L$ is a regular level, $\{|G|=L\}$ is a finite union of disjoint analytic Jordan curves. Choose a boundary curve $\Gamma$ of $\Omega$. The maximum modulus principle gives $|G|<L$ throughout its bounded interior. By regularity, the interior is the side of $\Gamma$ occupied locally by $\Omega$. Thus $\Omega$ is an analytic Jordan domain.

Every critical point $c$ of $p$ satisfies
\[
 |G(c)|\ge\Smal(p,0)>L,
\]
so $p'$ has no zero on $\ol\Omega$. Thus $F=p$ is an admissible model with boundary quotient of constant modulus $L$. Hence $M_{\partial}\ge L$. Letting regular levels approach $\Smal(p,0)$ from below gives $M_{\partial}\ge\Smal(p,0)$. Polynomials with $\Smal(p,0)\le1$ satisfy the same inequality because $M_{\partial}\ge1$. The supremum over all normalized polynomials therefore gives $M_{\partial}\ge M$.
\end{proof}

\appendix
\section{An explicit counterexample of degree 29,984}\label{sec:explicit-29984}

The constructions in \cref{sec:cubic,sec:faber} give counterexamples in sufficiently large degrees. We now give an explicit polynomial of degree $29\,984$, based on the binomial domain from \cref{subsec:binomial-family}, with interval certificates for every critical-point quotient. All numerical enclosures use outward interval arithmetic with rigorous bounds on rounding errors. The accompanying certificate gives the full numerical bounds and arithmetic details:

\begin{center}\small
\url{https://github.com/MColbrook/Interval_arithmetic_counterexample_Smale_mean_value}
\end{center}\normalsize

In the existence proofs, increasing the perturbation degree eventually makes the error in the critical-point quotients smaller than the boundary margin. This can require a very large degree. To obtain the example below, we consider the critical points with the smallest quotients and adjust the polynomial. Low-degree corrections and lower powers in the perturbation allow us to improve these quotients. Moving the base point provides further freedom.

\subsection{How the adjustments reduce the degree}\label{subsec:explicit-mechanism}

We use the domain from \cref{subsec:binomial-family} with $m=16$, and write $\widehat q$ for its scaled polynomial coordinate of degree $16$. Let $a$ be the model anchor and $f$ a polynomial approximation to the holomorphic model. The degree enters through the balance between this approximation and the high power. Consider a single-power perturbation $U=f+\lambda\widehat q^N$, with $\lambda\ne0$. At a critical point $c$ with $\widehat q'(c)\ne0$, the equation $U'(c)=0$ gives
\[
 U(c)=f(c)-\frac{\widehat q(c)f'(c)}{N\widehat q'(c)}.
\]
In the power-transfer argument of \cref{sec:cubic}, a uniform bound on the remaining factor gives an $O(1/N)$ critical-value error. Increasing $N$ reduces the factor $1/N$, but raises the degree to $16N$ once $16N>\deg f$. At smaller $N$, changes in the critical values and displacements of the critical points can consume the boundary margin. We therefore seek corrections that bring the smallest quotients above $1$ without further increasing the degree.

For the final polynomial we take $N=1874$ and write $P=f+\Delta_0+X+H$, where $H$ is the high-degree perturbation. The first adjustment uses the low-degree terms $\Delta_0$ and $X$. The factors $(z-a)^2$ ensure that the model anchor's value and derivative remain unchanged. Both corrections have degree far below $16N=29\,984$, so the final degree does not increase.

If $P_s=P+sR$ and a simple critical point $c$ continues as $c_s$, then
\[
 \left.\frac{\mathrm dc_s}{\mathrm ds}\right|_{s=0}
 =-\frac{R'(c)}{P''(c)},\qquad
 \left.\frac{\mathrm d}{\mathrm ds}P_s(c_s)\right|_{s=0}=R(c).
\]
Thus the derivative of the correction moves the critical point to first order, while its value changes the critical value. This explains the form
\[
 \Delta_0=(z-a)^2(KR_0+T),\qquad K(z)=(z-\xi_1)^2(z-\xi_2)^2,
\]
with polynomial $R_0$ and distinct correction centres $\xi_1,\xi_2$. The double zeros of $K$ make $KR_0$ and its derivative vanish at the centers. The cubic $T$ can prescribe arbitrary values and first derivatives there. Once those four data are chosen, $R_0$ can be varied to improve the other quotients without disturbing them. The centers are fixed rational points near sensitive critical points. 

The second adjustment replaces a single high power by
\[
 H=\omega\widehat q^{N-8}\bigl(\gamma\widehat q^8+E\bigr),
 \qquad \gamma>0,\quad |\omega|=1,\quad \deg E\le127.
\]
A single power fixes the ratio of its value to its derivative. For $H=\lambda\widehat q^N$, this ratio is $\widehat q/(N\widehat q')$ wherever $H'\ne0$. The coefficients in $E$ let us change this ratio. With $N=1874$, the term $\widehat q^{1866}E$ has degree at most $29\,983$, one below the leading term $\omega\gamma\widehat q^{1874}$. The amplitude $\gamma$ and phase $\omega$ provide further parameters. We choose $X$, $E$, the amplitude, the phase, and the base point together to improve the minimum quotient.

The third adjustment is to choose the base point independently of the model anchor. We denote the chosen point by $\widehat a$. With $P$ fixed, moving the base changes the subtracted value, the distance to each critical point, and the derivative used for normalization. The critical points stay fixed. For a critical point $c$, define the quotient at a variable base point $u$ with $P'(u)\ne0$:
\[
 Q_c(u)=\frac{P(c)-P(u)}{(c-u)P'(u)}.
\]
Where $Q_c(u)\ne0$, its logarithmic derivative is
\begin{equation}\label{eq:explicit-base-gradient}
 \frac{Q_c'(u)}{Q_c(u)}
 =-\frac{P'(u)}{P(c)-P(u)}+\frac1{c-u}-\frac{P''(u)}{P'(u)}.
\end{equation}
A small displacement $\eta$ changes $\log|Q_c(u)|$ by $\Ree(\eta Q_c'(u)/Q_c(u))$ to first order. This gives two real parameters for adjusting the minimum, without changing the degree. 

For the final coefficients specified below, at the model anchor $a$, a critical point $c$ of $P$ satisfies
\[
 0.99982426177940
 <\left|\frac{P(c)-P(a)}{(c-a)P'(a)}\right|
 <0.99982426177944<1.
\]
The same polynomial therefore has mean-value functional below $1$ at $a$ and above $1$ at $\widehat a$. The normalization in \labelcref{eq:explicit-normalised} uses the full values $P(\widehat a)$ and $P'(\widehat a)$.

\subsection{The polynomial}\label{subsec:explicit-definition}

\begin{theorem}[An explicit polynomial]\label{thm:explicit-29984}
The polynomial $p\in\Q(i)[z]$ defined by \labelcref{eq:explicit-normalised}, with the accompanying rational coefficient tables, satisfies
\[
 \deg p=29\,984,\qquad p(0)=0,\qquad p'(0)=1,
\]
and
\begin{equation}\label{eq:explicit-margin}
 1.00000049595801<\Smal(p,0)<1.00000049595804.
\end{equation}
Its $29\,983$ critical points are all simple, and the minimum is attained at exactly one.
\end{theorem}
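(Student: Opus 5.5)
The statement concerns a specific polynomial, so the proof will be a computer-assisted certificate rather than an asymptotic argument. The plan is to certify each claim with outward-rounded interval arithmetic, working from the exact rational coefficients in $\Q(i)$.

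\emph{Step 1: the algebraic claims.} These are handled exactly in rational arithmetic. The normalisation \labelcref{eq:explicit-normalised} divides $P(\widehat a+z)-P(\widehat a)$ by $P'(\widehat a)$. This gives $p(0)=0$ and $p'(0)=1$ identically, once we confirm $P'(\widehat a)\ne0$, which an enclosure does. The degree follows from the leading term $\omega\gamma\widehat q^{1874}$ with $\gamma>0$. Every other piece ($f$, $\Delta_0$, $X$, and $\widehat q^{1866}E$) has degree at most $29\,983$, so the leading coefficient is nonzero and $\deg p=16\cdot1874=29\,984$.

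\emph{Step 2: enclose every critical point.} We compute floating-point approximations $\tilde c_k$ of the $29\,983$ zeros of $p'$. The theory of \cref{sec:faber} and \cref{lem:phase} supplies good starting guesses near $\psi(\omega_{D,k})$, where the exterior coordinate is essentially a $16$th root of $\widehat q$. Around each $\tilde c_k$ we take a small disk $B_k$. On each disk we run an interval Newton/Krawczyk test for $p'$ on $B_k$. Success proves that $B_k$ contains exactly one zero of $p'$, and that the zero is simple because $p''\ne0$ on $B_k$. We then check that the disks are pairwise disjoint, so the $29\,983$ zeros are distinct. Since $\deg p'=29\,983$, this accounts for every critical point; no separate exclusion region or Rouch\'e count is needed.

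A serious technical point is evaluating polynomials of degree about $30\,000$ with rigorous, usable error bounds. We do not expand into the monomial basis. Instead we evaluate $p$, $p'$ and $p''$ through the structured form $f+\Delta_0+X+\omega\widehat q^{N-8}(\gamma\widehat q^8+E)$. We first enclose $\widehat q(z)$, then form powers by repeated squaring in interval arithmetic. This keeps the wrapping effect small, because near the boundary $|\widehat q|\approx1$.

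\emph{Step 3: the quotient bounds.} For each critical point we enclose $|p(c_k)/c_k|=|P(c_k)-P(\widehat a)|/(|c_k-\widehat a|\,|P'(\widehat a)|)$ over the disk $B_k$. We use the mean value form, or exploit $p'(c_k)=0$ so that $p$ varies only to second order across $B_k$. The minimum over $k$ of the lower endpoints must exceed $1.00000049595801$. Its upper endpoint at the minimising index $k_*$ must be below $1.00000049595804$. To show the minimum is attained at exactly one critical point, we check that every other enclosure's lower bound exceeds the upper bound at $k_*$.

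\emph{Main obstacle.} The hardest step is precision: the margin is about $5\times10^{-7}$, and we must resolve enclosures of width around $10^{-14}$ across $30\,000$ high-degree evaluations. I would first try double-double or multiprecision midpoint-radius arithmetic (around 100 bits), combined with the factored evaluation above. If the Krawczyk disks fail near clustered critical points, I would shrink the radii using a refined Newton step in higher precision before re-verifying.
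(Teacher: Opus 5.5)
Your proposal is correct and is essentially the paper's proof. The degree and normalisation come from the finite formulas; you should also note that an enclosure gives $h_L(a)\ne0$, so that $P$ is defined. All $29\,983$ critical points are isolated in pairwise disjoint one-zero disks whose count exhausts $\deg P'$, and radial bounds on the normalised quotient give \labelcref{eq:explicit-margin} and the unique minimiser.

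Your complex-disk Krawczyk test with preconditioner $1/P''(z_0)$ reduces exactly to the paper's linear-comparison inequality $|P'(z_0)|+\rho V_P<\rho|P''(z_0)|$ from \cref{lem:explicit-rouche}. The paper only adds the option of bounding $P'$ through a local analytic approximation with explicit binomial-remainder estimates, instead of evaluating the finite polynomial directly.
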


\begin{proof}
The finite formulas below give the degree and normalization. The certificate in \cref{subsec:explicit-certificate} establishes the critical-point count and the bounds in \labelcref{eq:explicit-margin}.
\end{proof}

We use the coordinate and truncation parameters
\begin{equation}\label{eq:explicit-coordinate}
\begin{gathered}
 m=16,\qquad N=1874,\qquad L=N-2=1872,\\
 A=\frac{1061603}{250000},\qquad C=1+A^2,\qquad S=A+1/A,\\
 \widehat q(z)=\frac{z^{16}+iA}{S},\qquad v(z)=i\widehat q(z).
\end{gathered}
\end{equation}
The coordinate used in \labelcref{eq:explicit-h} comes from the binomial family in \cref{subsec:binomial-family}. On $\Omega_{16,A}$, take principal fractional powers and, for a Blaschke parameter $r\in\D$, define
\[
 \beta=\left(\frac{1+iA}{C}\right)^{1/16},\qquad
 W(z)=\beta z(1+v(z))^{-1/16},\qquad
 B(w)=\frac{w-r}{1-\ol r w}.
\]
The coordinate satisfies
\[
 W(z)^{16}=\frac{(1+iA)z^{16}}{1+iAz^{16}};
\]
so $|W|=1$ on the model boundary $|z^{16}+iA|^2=C$. The rational coordinate satisfies $\widehat q=(A/\sqrt C)q_{16,A}$, and hence $|\widehat q|=A/\sqrt C<1$ on this boundary. Rationalizing $B(W)$ and truncating the binomial and geometric expansions gives \labelcref{eq:explicit-h}, with $b$ approximating $\beta$. The resulting $f(z)=(z-a)h_L(z)/h_L(a)$ is the first term of $P$ used above and approximates
\[
 F(z)=(z-a)\frac{B(W(z))}{B(W(a))}.
\]

The model anchor $a$, Blaschke parameter $r$, rational approximation $b$ to $\beta$, and correction centres $\xi_1,\xi_2$ have the exact values
\begingroup\small
\begin{align*}
 \Ree a&=1.0408093642746148137157525156090766412437,\\
 \Imm a&=0.3041902165017496705262264286568472099946,\\
 \Ree r&=0.9869461588252050688137472794964248376867,\\
 \Imm r&=-0.0432448562530681859520443550256337627761,\\
 \Ree b&=0.908850253183765306046968000840344747056738618388026628837241,\\
 \Imm b&=0.076267057870218869943792292811043863508717811372210553566224,\\
 \Ree\xi_1&=0.94945305488086338259789282088083367274676142809337,\\
 \Imm\xi_1&=0.01295609450414720552969704525015390110037343638189,\\
 \Ree\xi_2&=0.94845012497531595723791829728813409781718653511944,\\
 \Imm\xi_2&=0.01327429696527108235832258377560890558082991312789.
\end{align*}
\endgroup
Here $b$ is a coefficient parameter; we reserve $\widehat a$ for the normalization point.

Define
\[
 d_0=C+\ol r^{16}(-A^2+iA),\qquad
 d_1=iA-\ol r^{16}(1+iA),\qquad
 \alpha=-\frac{Cd_1}{iAd_0}.
\]
For $1\le k\le15$, set $\nu_k=1-k/16$ and compute the coefficients by the rational recurrence
\begin{equation}\label{eq:explicit-recurrence}
\begin{gathered}
 b_{k,0}=c_{k,0}=1,\\
 b_{k,j}=b_{k,j-1}\frac{\nu_k-j+1}{j},\qquad
 c_{k,j}=\alpha c_{k,j-1}+b_{k,j}\quad(1\le j\le L).
\end{gathered}
\end{equation}
Thus $b_{k,j}=\binom{\nu_k}{j}$. The resulting polynomial approximation to the Blaschke factor is
\begin{equation}\label{eq:explicit-h}
\begin{split}
 J(z)&=-rC(1+v(z))+\ol r^{15}(1+iA)z^{16},\\
 h_L(z)&=\frac{J(z)}{d_0}\sum_{j=0}^{L-1}(\alpha v(z))^j+\frac{(1-|r|^2)C}{d_0}
 \sum_{k=1}^{15}\ol r^{k-1}b^kz^k
 \sum_{j=0}^{L}c_{k,j}v(z)^j.
\end{split}
\end{equation}

For the corrections, use the basis $(5z/6)^k\widehat q(z)^\ell$ and set
\begin{equation}\label{eq:explicit-corrections}
\begin{aligned}
 K(z)&=(z-\xi_1)^2(z-\xi_2)^2,\qquad \mu=(\xi_1+\xi_2)/2,\\
 R_0(z)&=\sum_{\ell=0}^{63}\sum_{k=0}^{15}d_{\ell k}(5z/6)^k\widehat q(z)^\ell,\\
 T(z)&=\sum_{j=0}^{3}\eta_j(z-\mu)^j,\\
 \Delta_0(z)&=(z-a)^2\bigl(K(z)R_0(z)+T(z)\bigr),\\
 X(z)&=(z-a)^2\sum_{\ell=0}^{127}\sum_{k=0}^{15}x_{\ell k}(5z/6)^k\widehat q(z)^\ell,\\
 E(z)&=\sum_{\ell=0}^{7}\sum_{k=0}^{15}t_{\ell k}(5z/6)^k\widehat q(z)^\ell.
\end{aligned}
\end{equation}
The Gaussian-rational arrays $(d_{\ell k})$, $(\eta_j)$, $(x_{\ell k})$ and $(t_{\ell k})$ are specified in the accompanying coefficient tables.

Define the high-degree perturbation and the unnormalized polynomial by
\begin{equation}\label{eq:explicit-P}
\begin{gathered}
 \gamma=0.61480744530374742407,\qquad t=55.5578744900222005754,
 \qquad\omega=\frac{1+it}{1-it},\\
 H(z)=\omega\widehat q(z)^{1866}\bigl(\gamma\widehat q(z)^8+E(z)\bigr),\\
 P(z)=(z-a)\frac{h_L(z)}{h_L(a)}+\Delta_0(z)+X(z)+H(z).
\end{gathered}
\end{equation}
This gives $|\omega|=1$ exactly; the scalar $t$ is distinct from the entries $t_{\ell k}$. Finally, normalize at the independent base point by setting
\begin{equation}\label{eq:explicit-normalised}
\begin{gathered}
 \widehat a=1.04015231339331770677+0.302102329270430736141\,i,\\
 p(x)=\frac{P(\widehat a+x)-P(\widehat a)}{P'(\widehat a)}.
\end{gathered}
\end{equation}
Interval bounds establish $h_L(a)\ne0$ and $P'(\widehat a)\ne0$, so this defines a polynomial in $\Q(i)[x]$ with $p(0)=0$ and $p'(0)=1$.

The choice $L=N-2$ keeps the polynomial approximation below degree $16N$ in the leading power. The other terms fit below it too: the finite sums give
\[
\begin{array}{c|r}
 \text{term}&\text{degree, at most}\\\hline
 h_L&29\,967\\
 (z-a)h_L&29\,968\\
 \Delta_0&1029\\
 X&2049\\
 \widehat q^{1866}E&29\,983.
\end{array}
\]
Only $\omega\gamma\widehat q^{1874}$ contributes in degree $29\,984$. Its leading coefficient is $\omega\gamma/S^{1874}\ne0$. Translation and division by $P'(\widehat a)\ne0$ leave the degree unchanged.

\subsection{Certification of critical points and values}\label{subsec:explicit-certificate}

A critical point $c$ of $P$ becomes the critical point $c-\widehat a$ of $p$, and
\begin{equation}\label{eq:explicit-quotient}
 \left|\frac{p(c-\widehat a)}{c-\widehat a}\right|
 =\left|\frac{P(c)-P(\widehat a)}{(c-\widehat a)P'(\widehat a)}\right|.
\end{equation}
The certificate supplies $29\,983$ pairwise disjoint containing disks with rational centers and radii. Within each disk, a smaller disk contains exactly one zero of $P'$, counted with multiplicity. We prove this by comparison with a nonzero linear function.

\begin{lemma}[A local linear comparison]\label{lem:explicit-rouche}
Let $G$ and $g$ be holomorphic on a neighborhood of the closed disk $D=\{|z-z_0|\le\rho\}$. Suppose
\[
 E\ge\sup_D|G-g|,\qquad
 V\ge\sup_D|g'(z)-g'(z_0)|.
\]
If
\begin{equation}\label{eq:explicit-rouche}
 |g(z_0)|+\rho V+E<\rho|g'(z_0)|,
\end{equation}
then $G$ has exactly one zero in the open disk, counted with multiplicity.
\end{lemma}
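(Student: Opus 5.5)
The plan is to compare $G$ with the linear function $\ell(z)=g(z_0)+g'(z_0)(z-z_0)$ on the circle $|z-z_0|=\rho$ and apply Rouch\'e's theorem. The function $\ell$ has exactly one zero, $z_1=z_0-g(z_0)/g'(z_0)$. Since \labelcref{eq:explicit-rouche} forces $g'(z_0)\ne0$ and $|g(z_0)|<\rho|g'(z_0)|$, this zero satisfies $|z_1-z_0|<\rho$. So $\ell$ has exactly one simple zero in the open disk.

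Next, bound $G-\ell$ on the boundary circle. Split it as $G-\ell=(G-g)+(g-\ell)$. The first piece is at most $E$ in modulus by hypothesis. For the second, write
\[
 g(z)-\ell(z)=\int_{z_0}^{z}\bigl(g'(s)-g'(z_0)\bigr)\dd s
\]
along the radial segment, which lies in $D$. This gives $|g(z)-\ell(z)|\le\rho V$ on $|z-z_0|=\rho$. Hence $|G-\ell|\le E+\rho V$ there.

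On the circle, the reverse triangle inequality gives
\[
 |\ell(z)|\ge\rho|g'(z_0)|-|g(z_0)|,
\]
and by \labelcref{eq:explicit-rouche} this strictly exceeds $E+\rho V\ge|G-\ell|$. Therefore $|G-\ell|<|\ell|$ on the whole circle. In particular, neither $G$ nor $\ell$ vanishes on the circle, and Rouch\'e's theorem (equivalently, equal winding numbers of $G$ and $\ell$ around $0$ along the circle, as in the proof of \cref{thm:faber}) shows that $G$ and $\ell$ have the same number of zeros in the open disk, counted with multiplicity. That number is one.

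I do not expect a real obstacle. The only points needing care are that the strict inequality in \labelcref{eq:explicit-rouche} carries over to a strict Rouch\'e comparison on the circle, and that holomorphy on a neighbourhood of $D$ justifies both the integral estimate and the argument principle.
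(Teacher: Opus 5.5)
Your proof is correct and is essentially the paper's argument: a Rouch\'e comparison on the circle $|z-z_0|=\rho$, with the error split into $G-g$ (bounded by $E$) and the radial-integral bound $\rho V$ on $g$ minus its linearization. The only cosmetic difference is that the paper compares $G$ with $g'(z_0)(z-z_0)$ and absorbs $|g(z_0)|$ into the error, whereas you compare with the full linear part $\ell$; the paper's choice saves the step of checking that the zero of $\ell$ lies inside the disk.
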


\begin{proof}
On $|z-z_0|=\rho$, integrate along a radius to obtain
\[
 |G(z)-g'(z_0)(z-z_0)|
 \le E+|g(z_0)|+\rho V
 <|g'(z_0)(z-z_0)|.
\]
Rouch\'e's theorem then compares $G$ with the nonzero linear polynomial $g'(z_0)(z-z_0)$, which has exactly one zero in the disk. The zero of $G$ is consequently simple.
\end{proof}

For $G=P'$, we either enclose the finite polynomial directly or use a local analytic approximation with rigorous error bounds. The analytic comparison needs a binomial remainder formula valid beyond the Taylor circle. For $0<\nu<1$ and $v\notin(-\infty,-1]$,
\begin{equation}\label{eq:explicit-binomial-remainder}
 T_L^\nu(v)=(1+v)^\nu-\sum_{j=0}^L\binom\nu j v^j=\frac{(-1)^L\sin(\pi\nu)}{\pi}\,v^{L+1}
 \int_0^1\frac{s^{L-\nu}(1-s)^\nu}{1+sv}\,\mathrm ds.
\end{equation}
For $|v|<1$, expand $(1+sv)^{-1}$ and apply Euler's beta integral to prove the identity; analytic continuation extends it to the slit plane. The integral weight has total mass $|\binom\nu{L+1}|$ after multiplication by $\sin(\pi\nu)/\pi$. Consequently
\[
 |T_L^\nu(v)|\le
 \left|\binom\nu{L+1}\right|
 \frac{|v|^{L+1}}{\sigma(v)},\qquad
 \sigma(v)=\min_{0\le s\le1}|1+sv|.
\]
The recurrence \labelcref{eq:explicit-recurrence} also gives the finite identity
\[
 \frac{(1+v)^{\nu_k}}{1-\alpha v}
 -\sum_{j=0}^Lc_{k,j}v^j
 =\frac{T_L^{\nu_k}(v)+\alpha c_{k,L}v^{L+1}}{1-\alpha v}.
\]
Together with the finite geometric remainder and an enclosure of $\beta-b$, these identities bound the approximation error in \labelcref{eq:explicit-h}. Differentiation and Cauchy's estimates give the derivative bounds needed in \cref{lem:explicit-rouche}. Each analytic comparison takes place in a region that avoids the branch cut and has a positive lower bound on every denominator.

Alternatively, evaluate the derivatives of the finite polynomial at the center and analytically bound their variation. Suppose $|P'-g|\le E_R$ on a disk of radius $R>\rho$, and $V$ bounds $|g'(z)-g'(z_0)|$ on the concentric disk of radius $\rho$. Cauchy's estimate and integration along a radius give
\[
 \sup_{|z-z_0|\le\rho}|P''(z)-P''(z_0)|
 \le V+\frac{2\rho E_R}{(R-\rho)^2}=:V_P.
\]
The inequality
\[
 |P'(z_0)|+\rho V_P<\rho|P''(z_0)|
\]
therefore proves one simple critical point by \cref{lem:explicit-rouche} with $G=g=P'$ and $E=0$.

The $29\,983$ isolating disks lie in containing disks that are pairwise disjoint by exact rational distance comparisons. The one-zero counts in the isolating disks sum to $\deg P'=29\,983$. Thus they account for every critical point, and all are simple.

For the normalized critical-point quotients, define
\[
 \mathcal Q(z)=\frac{P(z)-P(\widehat a)}{(z-\widehat a)P'(\widehat a)}.
\]
Each isolating disk avoids $\widehat a$, and finite interval evaluation bounds $P'(\widehat a)$ away from zero. On such a disk,
\[
 \mathcal Q'(z)=
 \frac{P'(z)-(P(z)-P(\widehat a))/(z-\widehat a)}
 {(z-\widehat a)P'(\widehat a)}.
\]
If $R_Q\ge\rho\sup_D|\mathcal Q'|$ and
$\ell_0\le|\mathcal Q(z_0)|\le u_0$, integration along a radius gives
\[
 \ell_0-R_Q\le|\mathcal Q(c)|\le u_0+R_Q
\]
for the critical point $c$ in the disk. An analytic enclosure at the center gives the same bound after adding the finite-polynomial approximation error. 

Over the complete critical set, the interval bounds give
\[
 1.00000049595801828345<\Smal(p,0)
 <1.00000049595803927317,
\]
which implies \labelcref{eq:explicit-margin}. Every normalized critical-point quotient except one has modulus greater than $1.0000004992776471$, which is above the displayed upper bound. Thus the remaining critical point is the unique minimizer.

\begingroup
\renewcommand{\bibliofont}{\normalsize}
\raggedbottom
\nocite{Smale1998Journal}
\bibliographystyle{amsplain}
\bibliography{boundary_geometry_mean_value}
\endgroup
\end{document}